\documentclass[a4paper,11pt]{amsart}

\usepackage[top=30truemm,bottom=25truemm,left=35truemm,right=35truemm]{geometry}

\usepackage{amssymb}
\usepackage{url} 

\usepackage{amscd} 
\usepackage[all]{xy} 
\usepackage{graphics}
\usepackage{multienum}

\newtheorem{thm}{Theorem}[section]
\newtheorem{lem}[thm]{Lemma}
\newtheorem{lem-dfn}[thm]{Lemma-Definition}
\newtheorem{prop}[thm]{Proposition}
\newtheorem{cor}[thm]{Corollary}

\theoremstyle{definition}
\newtheorem{defn}[thm]{Definition}

\newtheorem{ex}[thm]{Example}

\newtheorem{quest}[thm]{Question}

\newtheorem*{acknowledgement}{Acknowledgement}
\newtheorem*{conventions}{Conventions}

\theoremstyle{remark}

\newtheorem{rem}[thm]{Remark}

\begin{document}

\subjclass[2020]{13A35, 14J17, 13A02}
\keywords{F-rational rings, rational singularities, graded rings}

\title[$F$-rationality of 2-dimensional graded rings]{$F$-rationality of two-dimensional graded rings  with a rational singularity}

\author{Kohsuke Shibata}
\address{School of Engineering, 
Tokyo Denki University, Adachi-ku, Tokyo 120-8551, Japan.}
\email{shibata.kohsuke@mail.dendai.ac.jp}

\begin{abstract}
It is known that a two-dimensional $F$-rational ring has a rational singularity.
However a two-dimensional ring with a rational singularity is not $F$-rational in general. In this paper, we  investigate $F$-rationality of a two-dimensional graded ring with a rational singularity in terms of the multiplicity. Moreover, we determine when a two-dimensional graded ring with a rational singularity and a small multiplicity is $F$-rational.
\end{abstract}

\maketitle

\section{Introduction}
It is well known by now that there is an interesting connection between $F$-singularities and singularities in birational geometry.
In \cite{HW02}, Hara and Watanabe showed that  a strongly $F$-regular ring has log terminal singularities and an $F$-pure ring has log canonical singularities.
In \cite{S}, Smith showed that an $F$-rational ring has pseudo-rational singularities.
Therefore a two-dimensional excellent $F$-rational ring has a rational singularity. 
However two-dimensional excellent  ring with a rational singularity is not $F$-rational in general.
Thus a natural question is when rings with rational singularities are  $F$-rational.

In \cite{HW}, Hara and Watanabe investigated $F$-rationality of a two-dimensional graded ring with a rational singularity in terms of Pinkham-Demazure construction and gave the necessary and sufficient condition for $F$-rationality of a two-dimensional graded ring with a rational singularity.

In April 2020, Kei-ichi Watanabe asked the author the following question.
\begin{quest}\label{Watanabe's question}
Let $D=\sum_{i=1}^r \frac{c_i}{d_i} P_i$ be an ample $\mathbb Q$-divisor on $\mathbb P_k^1$, where $c_i\in\mathbb Z$,  $d_i\in \mathbb N$  and $P_i$ are distinct points of $\mathbb P_k^1$.
Let $R=\bigoplus_{n\ge 0} H^0(\mathbb P_k^1,\mathcal O_{\mathbb P_k^1}([nD]))t^n$.
Assume that $R$ has a rational singularity and $d_i>p$ for all $i$.
Then  is $R$  $F$-rational?
\end{quest}
In this paper, we give an affirmative answer to this question.
\begin{thm}
Let $D=\sum_{i=1}^r \frac{c_i}{d_i} P_i$ be an ample $\mathbb Q$-divisor on $\mathbb P_k^1$, where $c_i\in\mathbb Z$,  $d_i\in \mathbb N$  and $P_i$ are distinct points of $\mathbb P_k^1$.
Let $R=\bigoplus_{n\ge 0} H^0(\mathbb P_k^1,\mathcal O_{\mathbb P_k^1}([nD]))t^n$.
Assume that $R$ has a rational singularity and $p$ does not divide  any $d_i$.
Then   $R$ is $F$-rational.
In particular, Question \ref{Watanabe's question} is affirmative.
\end{thm}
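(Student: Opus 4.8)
My plan is to reduce the statement to the criterion of Hara and Watanabe \cite{HW} for $F$-rationality of two-dimensional graded rings with a rational singularity, and then to exploit the tameness hypothesis $p\nmid d_i$. First I would set up the Pinkham--Demazure dictionary: putting $D_n:=\lfloor nD\rfloor$, the ring $R=\bigoplus_{n\ge 0}H^0(\mathbb P^1,\mathcal O(D_n))t^n$ is a two-dimensional normal, hence Cohen--Macaulay, $\mathbb N$-graded ring with $R_0=k$, and $H^2_{\mathfrak m}(R)=\bigoplus_{n\in\mathbb Z}H^1(\mathbb P^1,\mathcal O(D_n))$. Thus ``$R$ has a rational singularity'' is equivalent to $\deg D_n\ge -1$, i.e. $H^1(\mathbb P^1,\mathcal O(D_n))=0$, for all $n\ge 1$; in particular $a(R)<0$, so $[H^2_{\mathfrak m}(R)]_n=0$ for $n\ge 0$ and the socle of $H^2_{\mathfrak m}(R)$ lies in boundedly many negative degrees (bounded in terms of $D$ alone). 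The $e$-th Frobenius on $H^2_{\mathfrak m}(R)$ sends $H^1(\mathbb P^1,\mathcal O(D_n))$ to $H^1(\mathbb P^1,\mathcal O(D_{p^en}))$ through $H^1(\mathbb P^1,\mathcal O(D_n))\xrightarrow{\Phi^e}H^1(\mathbb P^1,\mathcal O(p^eD_n))\to H^1(\mathbb P^1,\mathcal O(D_{p^en}))$, where $\Phi^e$ is the intrinsic Frobenius of $\mathbb P^1$ and the second map comes from the inclusion $\mathcal O(p^eD_n)\hookrightarrow\mathcal O(D_{p^en})$, whose cokernel is supported on $\{P_1,\dots,P_r\}$ with length $\sum_i\bigl(\lfloor p^enc_i/d_i\rfloor-p^e\lfloor nc_i/d_i\rfloor\bigr)=\sum_i\lfloor p^er_i/d_i\rfloor$, where $0\le r_i<d_i$ and $r_i\equiv nc_i\pmod{d_i}$.

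Since $R$ is Cohen--Macaulay, $R$ is $F$-rational if and only if $(0)$ is tightly closed in $H^2_{\mathfrak m}(R)$, and by the analysis of \cite{HW} for Pinkham--Demazure rings this reduces — because $0^{*}_{H^2_{\mathfrak m}(R)}$ is a graded $R\{F\}$-submodule whose nonvanishing forces nonvanishing in its socle — to checking, for each homogeneous $0\ne z\in H^1(\mathbb P^1,\mathcal O(D_n))$ with $n$ in the (bounded) socle range, so $\deg D_n\le -2$, and for one fixed homogeneous test element $c$ of $R$, that $c\,F^e(z)\ne 0$ for infinitely many $e$. I would take $c$ of degree $m_0$ a multiple of $\operatorname{lcm}(d_1,\dots,d_r)$, so that $D_{m_0}=m_0D$ is an integral divisor, and whose associated section does not vanish at any $P_i$ (possible after replacing $c$ by a power). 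Dualizing by Serre duality on $\mathbb P^1$, the condition $c\,F^e(z)\ne 0$ becomes the statement that $z$ does not annihilate the image of the composite $H^0(\mathbb P^1,\mathcal O(K-D_{p^en+m_0}))\to H^0(\mathbb P^1,\mathcal O(K-D_n))$ given by multiplication by $c$, then the natural inclusion into $H^0(\mathbb P^1,\mathcal O(K-p^eD_n))$, then the iterated Cartier operator $C^e$ (here $K=K_{\mathbb P^1}$). Hence $R$ is $F$-rational if and only if, for each relevant $n$, these images, summed over $e$, span all of $H^0(\mathbb P^1,\mathcal O(K-D_n))$.

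The heart of the proof is to verify this spanning, and this is where $p\nmid d_i$ is used. The extra vanishing imposed along $P_i$ when passing from $H^0(\mathbb P^1,\mathcal O(K-p^eD_n))$ down to its subspace $H^0(\mathbb P^1,\mathcal O(K-D_{p^en}))$ has order $\lfloor p^er_i/d_i\rfloor$; since $p$ is invertible modulo $d_i$, multiplication by $p^e$ merely permutes the residues modulo $d_i$, so $\lfloor p^er_i/d_i\rfloor=(p^er_i-s_i)/d_i$ with $0\le s_i<d_i$ bounded independently of $e$, and after applying $C^e$ — which divides vanishing orders by $p^e$ up to an additive correction of at most $1$ — this cost collapses to at most one extra unit of vanishing at each $P_i$, which is precisely the slack produced by the fact that the coefficient of $D_n$ at $P_i$ was rounded down from a non-integer (exactly when $r_i\ne 0$); the rational-singularity inequalities $\deg D_m\ge -1$, applied to the finitely many shifts $m\ge 1$ that occur, rule out any further obstruction. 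Carrying this out in \v Cech coordinates on $\mathbb P^1$, with the two charts chosen so that all $P_i$ and the zeros of $c$ lie in one of them — whereupon $\Phi^e(z)$, the inclusion, and $C^e$ are given by explicit monomial formulas — turns the spanning statement into a finite list of inequalities among exponents, each following from $p\nmid d_i$ together with $\deg D_m\ge -1$.

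The main obstacle is exactly this uniform-in-$e$ bookkeeping. The maps $H^1(\mathbb P^1,\mathcal O(p^eD_n))\to H^1(\mathbb P^1,\mathcal O(D_{p^en}))$ (equivalently their Serre duals) have large and growing kernels, so it is not enough that the ambient cohomology group be nonzero — one must show that the very particular class $c\,F^e(z)$ escapes this kernel for infinitely many $e$, and it is the interaction between the $p^e$-th power structure of $\Phi^e(z)$ and the divisibility $p\nmid d_i$, which prevents the fractional parts $r_i/d_i$ from ever being pushed past an integer by repeated multiplication by $p$ and division by $p^e$, that makes this go through. That tameness is essential is clear from the $E_8$ singularity, realized as $R(\mathbb P^1,\tfrac12 P_1+\tfrac23 P_2+\tfrac45 P_3)$ with denominators $2,3,5$: it is a rational singularity for every $p$, but fails to be $F$-rational exactly when $p\in\{2,3,5\}$, i.e. exactly when the hypothesis $p\nmid d_i$ fails.
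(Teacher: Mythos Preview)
Your proposal is correct in strategy but takes a substantially longer route than the paper, and leaves the decisive step as a sketch. The paper's argument is five lines: it invokes the numerical Hara--Watanabe criterion (Theorem~\ref{criterion}) that $R$ is $F$-rational if and only if $\deg[-pnD]+\deg(B_n)_{\mathrm{red}}\le 1$ for every $n\ge 1$, where $B_n=-p[-nD]+[-pnD]$. Assuming this fails at some $m$, set $l=\#\{i:mc_i/d_i\notin\mathbb Z\}$. Since $p\nmid d_i$, each such $pmc_i/d_i$ is also non-integral, whence $\deg[-pmD]=-\sum_i\lceil pmc_i/d_i\rceil=-l-\deg[pmD]\le -l+1$ by the rational-singularity bound $\deg[pmD]\ge -1$, while $\deg(B_m)_{\mathrm{red}}\le l$; together these give $2\le 1$.

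You instead unwind matters at the cohomological level---Frobenius on $H^2_{\mathfrak m}$, test elements, Serre duality, Cartier operators, explicit \v Cech computations---essentially re-deriving the machinery behind \cite{HW} in the course of applying it. This is a valid route, and your use of $p\nmid d_i$ (multiplication by $p^e$ permutes residues modulo $d_i$, so the vanishing-order defects stay bounded and collapse under $C^e$) is exactly the right mechanism. But the point of the numerical criterion is that it packages all of this once and for all: what you describe as ``a finite list of inequalities among exponents, each following from $p\nmid d_i$ together with $\deg D_m\ge -1$'' is, after the dust settles, precisely the single inequality $\deg[-pnD]+\deg(B_n)_{\mathrm{red}}\le 1$. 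Quoting Theorem~\ref{criterion} and doing the short arithmetic above is both cleaner and complete; your outline, as written, still owes the reader that final verification.
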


In \cite{H}, Hara proved that a two-dimensional log terminal singularity is strongly $F$-regular if the characteristic is larger than $5$.
This implies that a two-dimensional rational double point is $F$-rational if  the characteristic is larger than $5$.
In this paper, we investigate $F$-rationality of a two-dimensional graded ring with a rational singularity  in terms of the multiplicity. 
We prove the following theorem.
\begin{thm}\label{Main theorem}
Let $m\in \mathbb N$.
 There exists a positive integer $p(m)$ such that 
   $R$ is $F$-rational for any two-dimensional  graded ring $R$ with a rational singularity, $e(R)=m$ and $R_0=k$, an algebraically closed field of  characteristic  $p\ge p(m)$.
\end{thm}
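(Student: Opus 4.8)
The plan is to pass through the Pinkham--Demazure description to a combinatorial problem on $\mathbb{P}^1$, and then to show that the criterion of Hara--Watanabe \cite{HW} for $F$-rationality is forced once $p$ is large relative to $m=e(R)$. Since $R$ is a two-dimensional normal graded ring with $R_0=k$, we may write $R=R(C,D):=\bigoplus_{n\ge 0}H^0(C,\mathcal{O}_C(\lfloor nD\rfloor))t^n$ for a smooth projective curve $C$ and an ample $\mathbb{Q}$-divisor $D$. As $R$ has a rational singularity its geometric genus vanishes, and since $p_g(R)\ge g(C)$ this forces $C\cong\mathbb{P}^1_k$. Thus, exactly as in Question~\ref{Watanabe's question}, $R=R(\mathbb{P}^1,D)$ with $D=\sum_{i=1}^r\frac{c_i}{d_i}P_i$ in normalized form, and it suffices to produce a $p(m)$, depending only on $m$, with the stated property.

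The essential subtlety is that $e(R)=m$ does \emph{not} bound the denominators $d_i$: the cyclic quotient singularities $A_{d-1}$ all have $e=2$ while their denominators are unbounded, so the theorem answering Question~\ref{Watanabe's question} cannot be applied directly. What $e(R)=m$ does control is everything else. Since a two-dimensional rational singularity is Cohen--Macaulay, Abhyankar's inequality gives that $R$ has at most $m+1$ homogeneous generators; reading this, together with the shape of the star-shaped resolution graph (central curve $E_0$ and $r$ chains), back into the Pinkham--Demazure data, I would prove explicit bounds $r\le r(m)$, $\deg D\le\delta(m)$ and $-a(R)\le\alpha(m)$ depending only on $m$ --- heuristically, too many fractional points or too large a degree would force extra generators or a fundamental cycle with $-Z^2>m$.

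Next I would invoke \cite{HW}: for $R(\mathbb{P}^1,D)$ with a rational singularity, $F$-rationality is equivalent to a combinatorial condition on $(D,p)$. After Serre duality this condition is the injectivity, for each of the finitely many socle degrees $n$ --- all negative, with $|n|\le\alpha(m)+1$ by the previous step --- of the Frobenius-induced map $H^1(\mathbb{P}^1,\mathcal{O}(\lfloor nD\rfloor))\to H^1(\mathbb{P}^1,\mathcal{O}(\lfloor pnD\rfloor))$; equivalently, the Cartier operator on $\mathbb{P}^1$ restricted to the sections vanishing along the defect divisor $\Delta_n:=\lfloor pnD\rfloor-p\lfloor nD\rfloor$ (of degree $\le r(p-1)$) still surjects onto $H^0(\mathbb{P}^1,\mathcal{O}(-2-\lfloor nD\rfloor))$. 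The plan is to verify this for every admissible $D$ once $p\ge p(m)$: the target has bounded dimension, the source has dimension growing linearly in $p$ with slope $-\deg\lfloor nD\rfloor$, and the rational-singularity hypothesis --- which keeps all the relevant cohomology in negative degrees --- is precisely what makes $-\deg\lfloor nD\rfloor$ large enough relative to $\deg\Delta_n$, so the required surjectivity holds for $p$ large. For the indices $i$ with $p\nmid d_i$ one argues branch by branch as in the proof of the theorem answering Question~\ref{Watanabe's question}; for the $i$ with $p\mid d_i$ --- where $d_i$ is $p^{a_i}$ times the bounded $p$-free part, and $R=S^{G}$ is the ring of invariants of an $F$-rational ring $S=R(\mathbb{P}^1,D')$ under a $\mu_{p^{a_i}}$-type (hence linearly reductive) group scheme $G$ --- one instead uses that $F$-rationality descends along the Reynolds splitting $R=S^{G}\hookrightarrow S$.

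The main obstacle is this last step: showing, \emph{uniformly} over the finitely many profiles of divisors $D$ of bounded complexity, that the Hara--Watanabe condition cannot fail once $p$ exceeds an explicit threshold, and giving a clean treatment of the mixed case in which $p$ divides some but not all of the $d_i$, where neither the theorem answering Question~\ref{Watanabe's question} nor a single cyclic cover of $\mathbb{P}^1$ suffices. The numerical bounds of the second step must be sharp enough, and the dimension estimate for the restricted Cartier operator robust enough, that one common $p(m)$ handles the whole family; extracting an effective $p(m)$ is where most of the work lies.
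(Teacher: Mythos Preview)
Your proposal is a research plan rather than a proof, and it diverges from the paper's argument at the essential step. You correctly reduce to $R=R(\mathbb{P}^1,D)$ via Pinkham--Demazure and correctly observe that $e(R)=m$ bounds $r$ and the self-intersection of the central curve but not the denominators $d_i$. However, your treatment of the case $p\mid d_i$ has a real gap. The assertion that ``$d_i$ is $p^{a_i}$ times the bounded $p$-free part'' is false: already for a branch $\frac{c_i}{d_i}=\frac{l}{l+1}$ (a chain of $l$ $(-2)$-curves, compatible with any fixed multiplicity) one has $d_i=l+1$, and taking $l+1=Np$ with $N$ prime to $p$ gives unbounded $p$-free part. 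You also do not construct the ring $S$ with $R=S^{G}$: expressing $R(\mathbb{P}^1,D)$ as $\mu_{p^a}$-invariants of some $R(\mathbb{P}^1,D')$ with $D'$ having $p$-free denominators is not a standard move, and the obvious Veronese operations change denominators in the wrong direction. Finally, the Cartier-operator surjectivity you invoke is asserted, not proved; a large source does not by itself give surjectivity, and the Hara--Watanabe criterion of Theorem~\ref{criterion} is in any case a purely numerical inequality that one should verify directly.

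The paper's route avoids all of this by a limit argument. Assuming failure, one obtains a sequence of counterexample divisors $D_l$ over fields of characteristic $p_l\to\infty$. The multiplicity bound controls, via Proposition~\ref{Z^2} and Theorem~\ref{dual graph of graded ring}, the number and size of the entries $\ge 3$ in each continued fraction $d_i^{(l)}/c_i^{(l)}$, while the runs of $2$'s may grow without bound. Passing to a subsequence one fixes these profiles $T(d_i^{(l)}/c_i^{(l)})$ and arranges $c_i^{(l)}/d_i^{(l)}$ to be monotone; the crucial observation is that then $c_i^{(l)}/d_i^{(l)}$ converges to a \emph{rational} limit $c_i/d_i$ with \emph{bounded} denominator, since $[[(2)^l]]=(l+1)/l\to 1$. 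A direct comparison (Lemma~\ref{rationality lim}) then shows that the numerical Hara--Watanabe inequality for $D_l$ is dominated by that for the limit divisor $D$, and the latter is satisfied once $p_l$ exceeds all the bounded limit denominators --- contradicting non-$F$-rationality of $R(\mathbb{P}^1_{k_l},D_l)$. No cyclic covers, no Cartier-operator analysis, and no case distinction on whether $p\mid d_i$ is needed.
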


Moreover,  we can determine  $p(3)$ and $p(4)$ in the above theorem. 
\begin{thm}\label{Main theorem2}
Let $R$ be a two-dimensional graded ring with a rational singularity.
\begin{enumerate}
\item If $e(R)= 3$ and $p\ge 7$, then  $R$ is $F$-rational.

\item If $e(R)= 4$ and $p\ge 11$,  then  $R$ is $F$-rational.

\end{enumerate}
Furthermore, 
these inequalities are best possible.
\end{thm}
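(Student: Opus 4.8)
The plan is to pass to the Pinkham--Demazure model of $R$ and then run the numerical $F$-rationality criterion of Hara and Watanabe, using the multiplicity to control the data entering the criterion. Concretely, I would first write $R$, via the Pinkham--Demazure construction, as $R=\bigoplus_{n\ge 0}H^0(\mathbb P^1,\mathcal O_{\mathbb P^1}(\lfloor nD\rfloor))t^n$ for an ample $\mathbb Q$-divisor $D=D_0+\sum_{i=1}^r\frac{c_i}{d_i}P_i$ on $\mathbb P^1$, with $D_0$ integral, $0<c_i<d_i$, $\gcd(c_i,d_i)=1$ and $\deg D>0$; the base curve is $\mathbb P^1$ precisely because $R$ has a rational singularity and $R_0=k$ is algebraically closed. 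The datum $(r;d_i;c_i;\deg D)$ determines the minimal resolution of $\operatorname{Spec}R$ --- a star-shaped dual graph with central curve of self-intersection $-b_0$ and arms the Hirzebruch--Jung expansions attached to the $c_i/d_i$ --- and $e(R)=-Z^2$ for the fundamental cycle $Z$ by Artin's theory of rational singularities. It must be stressed that $d_i$ is \emph{not} bounded in terms of $e(R)$: already a central $(-3)$-curve with three arms consisting of $(-2)$-curves of arbitrary lengths has $e(R)=3$. So the statement is genuinely stronger than Theorem~1.1 and cannot be deduced from it.

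Next I would invoke the Hara--Watanabe criterion \cite{HW}: a two-dimensional $R=R(\mathbb P^1,D)$ with a rational singularity is $F$-rational if and only if an explicit system of inequalities, one for each power $q=p^{\nu}$, holds for the quantities $\lfloor q\,c_i/d_i\rfloor$, $\deg\lfloor qD\rfloor$ and $\deg\lfloor -D\rfloor$. The two features I would exploit are that each inequality holds automatically once $q$ is large compared with the defect $\sum_i(1-c_i/d_i)$ of the data --- so only finitely many small $q=p^{\nu}$ need be examined --- and that for such $q$ the condition reduces to a size-and-congruence condition on the residues $p^{\nu}c_i\bmod d_i$ at those $i$ with $p\mid d_i$ (when $p$ divides no $d_i$, one is back in the situation of Theorem~1.1). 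Before running this, I would dispose of the case $r\le 2$: then $R$ is a cyclic quotient singularity, hence toric, hence $F$-rational in every characteristic; and the remaining log terminal configurations are $F$-rational by Hara's theorem \cite{H}, since $p\ge 7>5$.

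The heart of the argument is to show that when $e(R)=3$ the Hara--Watanabe system cannot fail for $p\ge 7$, and when $e(R)=4$ it cannot fail for $p\ge 11$. Here one uses that $e(R)=m$ constrains both $b_0$ and $\sum_i(1-c_i/d_i)$ through $e(R)=-Z^2$ and the shape of the star (equivalently, through the classification of rational singularities of multiplicity $3$ and $4$): the configurations in which a failure is numerically possible form a short explicit list of shapes decorated with $(-2)$-chains of arbitrary length, and running the criterion against this list shows that a failure can occur only when $p$ divides one of finitely many small denominators $d_i$, and then only for $p\le 5$ (if $m=3$) or $p\le 7$ (if $m=4$). I expect this uniform verification to be the main obstacle: one must extract from $e(R)=3$ or $4$ a grip on the Hara--Watanabe data tight enough to exclude a failure at $p=7$, resp.\ $p=11$, while still permitting one at $p=5$, resp.\ $p=7$.

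Finally, for the ``best possible'' clause I would exhibit explicit witnesses. For $m=3$ and $p=5$, take $R=R(\mathbb P^1,\tfrac15P_1+\tfrac15P_2+\tfrac15P_3)$, whose minimal resolution is the star with central $(-3)$-curve and three arms of four $(-2)$-curves; then $e(R)=3$, the singularity is rational (not log terminal), and one checks that the Hara--Watanabe inequalities fail in characteristic $5$, so $R$ is not $F$-rational. For $m=4$ and $p=7$, one may take $R=R(\mathbb P^1,\tfrac17P_1+\tfrac12P_2+\tfrac12P_3+\tfrac12P_4)$, whose resolution has central $(-4)$-curve, one arm of six $(-2)$-curves and three arms of a single $(-2)$-curve, again with $e(R)=4$ and failing the criterion in characteristic $7$. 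Since $8,9,10$ are not prime, these examples show that the bounds $p\ge 7$ and $p\ge 11$ cannot be lowered. The sharpness computations should be short once the criterion is in hand; the real work lies in the uniform verification of the previous step.
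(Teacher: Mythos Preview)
Your overall strategy---reduce to $R=R(\mathbb P^1,D)$ via Pinkham--Demazure, list the finitely many shapes of $D$ compatible with $e(R)=3$ or $4$, and then run the Hara--Watanabe criterion on each---is exactly the approach the paper takes in Sections~5 and~6. You correctly identify the case-by-case verification as the main work, and the paper confirms this: Section~5 produces an explicit finite list (Theorem~\ref{classification summarize}) and Section~6 checks each entry against Theorem~\ref{criterion}, largely by comparing $D$ to a few benchmark divisors via Lemma~\ref{deg le -2}.

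However, your sharpness witnesses are both \emph{incorrect}: each of them is $F$-rational in the stated characteristic. In your $m=3$ example $D=\tfrac15(P_1+P_2+P_3)$, after normalising to the paper's form one has $D\sim 3P_0-\tfrac45(P_1+P_2+P_3)$, so $s=3=r$. Likewise in your $m=4$ example $D=\tfrac17P_1+\tfrac12(P_2+P_3+P_4)\sim 4P_0-\tfrac67P_1-\tfrac12(P_2+P_3+P_4)$, so $s=4=r$. But Proposition~\ref{f-rational deg 1}(1) says that whenever $s\ge r$ the ring $R(\mathbb P^1,D)$ is $F$-rational in \emph{every} characteristic; indeed $\deg[-pnD]\le -s$ while $\deg(B_n)_{\mathrm{red}}\le r$, so the criterion of Theorem~\ref{criterion} is automatically satisfied. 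Any non-$F$-rational rational example must satisfy $r=s+1$ (Proposition~\ref{f-rational deg 1}(2)), and neither of your examples does. The paper's witness is Example~\ref{ex1}: $D=2P_0-\tfrac{p+1}{2p}P_1-\tfrac{p-1}{p}P_2-\tfrac12P_3$, which has a rational singularity with $e(R)=\tfrac{p+1}{2}$ and is not $F$-rational; taking $p=5$ and $p=7$ gives $e(R)=3$ and $e(R)=4$ respectively.
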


The paper is organized as follows.
In Section 2, we review  definitions and some facts on $F$-rational rings, rational singularities  and Pinkham-Demazure construction.
In Section 3, we investigate $F$-rationality of a two-dimensional graded ring with a rational singularity in terms of Pinkham-Demazure construction and give  an affirmative answer to Question \ref{Watanabe's question}.
In Section 4, we prove Theorem \ref{Main theorem}.
In Section 5, we classify two-dimensional graded rings with a rational singularity and multiplicity $3$ and $4$ in terms of Pinkham-Demazure construction. 
In Section 6, we determine $p(3)$ and $p(4)$ in Theorem \ref{Main theorem}.

\begin{acknowledgement}
The  author would like to thank   Kei-ichi Watanabe for the discussion and many suggestions. 
The author are grateful to Alessandro De Stefani and Ilya Smirnov for insightful conversations and comments
on a rough draft of this paper.
The  author is partially supported by JSPS KAKENHI Grant Number JP20J00132. 

\end{acknowledgement}

\begin{conventions}
Throughout this paper, $p$ is a  prime number and $k$ is an algebraically closed field of characteristic $p$.
We assume that a ring is  essentially of finite type  over $k$.
By a graded ring, we mean a ring $R=\oplus_{n\ge 0} R_n$, which is finitely generated over the subring $R_0=k$. 
\end{conventions}

\section{Preliminaries}   
In this section we introduce  definitions and some facts on $F$-rational rings, rational singularities  and Pinkham-Demazure construction.

\subsection{$F$-rational rings and rational singularities}
In this subsection  we introduce the definitions of $F$-rational rings and rational singularities.

\begin{defn}
Let $R$ be a ring and $I$  an ideal of $R$. The tight closure $I^*$ of $I$ is defined by  $x\in I^*$
if and only if there  exists $c\in R^\circ$ such that $cx^{p^e}\in I^{[p^e]}$ for $e\gg 0$, where $R^\circ$ is the set of elements of $R$ which are not in any minimal prime ideal and $I^{[p^e]}$ is the ideal generated by the $p^e$-th powers of the elements of $I$.
We say that $I$ is tightly closed if $I^*=I$.
\end{defn}

\begin{defn}
A local ring $(R,\mathfrak m)$  is $F$-rational if every parameter ideal is tightly closed.
An arbitrary ring $R$  is  $F$-rational if $R_\mathfrak m$ is $F$-rational for every maximal ideal $\mathfrak m$.

\end{defn}

\begin{defn}
A local ring $(R,\mathfrak m)$  is $F$-injective if $R$-module homomorphism
\[
H^i_\mathfrak m(F):H^i_\mathfrak m(R) \to H^i_\mathfrak m(R)
\]
is injective for all $i$.
An arbitrary ring $R$  is  $F$-injective if $R_\mathfrak m$ is $F$-injective for every maximal ideal $\mathfrak m$.
\end{defn}

\begin{defn}
Let $R$ be a  two-dimensional  normal ring,  and let $f:Y \to X:=\mathrm{Spec} (R)$ be a resolution of singularities. 
The ring $R$ is said to be (or have) a rational singularity if $R^1f_*\mathcal O_Y=0$.
\end{defn}

\begin{rem}
It is known that there exists a resolution of singularity even in positive characteristic for any two-dimensional excellent normal ring (see e.g. \cite[Theorem 2.1]{L}).
\end{rem}

\subsection{Hirzebruch-Jung Continued fraction}
In this subsection, we introduce the definition and basic properties of the  Hirzebruch-Jung continued fraction.

\begin{defn}
Let $a_1,a_2,\dots,a_n$ be real numbers.
We denote by $[[a_1,\dots,a_n]]$ the  Hirzebruch-Jung continued fraction:
\[ [[a_1,\dots,a_n]]:=a_1-\cfrac{1}{a_2-\cfrac{1}{a_3-\cfrac{1}{\cdots-\cfrac{1}{a_n}}}}. \]
\end{defn}

\begin{rem}\label{HJ fraction uniqueness}
For any rational number $r\in \mathbb Q$ with $r>1$,
there exist unique natural numbers $a_1,\dots,a_n\in \mathbb N$ such $r=[[a_1,\dots,a_n]]$ and $a_i\ge 2$ for all $i$ (see \cite[Lemma 7.4.14]{I}).
\end{rem}

\begin{lem}\label{[[a,b]]=[[a,[[b]]]]}
Let $m,n$ be positive integers with $m< n$, and let $a_1,\dots,a_n$ be real numbers.
Then we have 
\[ [[a_1,\dots,a_n]]= [[a_1,\dots,a_m,[[a_{m+1}\dots,a_n]]]]\]
\end{lem}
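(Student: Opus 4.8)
The plan is to argue by induction on $n$, peeling off the leading entry $a_1$ and using the recursive reading of the Hirzebruch-Jung continued fraction. The two facts I will use repeatedly are immediate from the definition: first, $[[b_1]]=b_1$ for a single real number $b_1$; and second, for $k\ge 2$ one has $[[b_1,\dots,b_k]]=b_1-1/[[b_2,\dots,b_k]]$, which is exactly the recursive interpretation of the nested fraction in the definition (indeed one may take this recursion as the definition). Throughout I tacitly assume that every continued fraction written down is defined, i.e.\ that no division by zero occurs; in all situations where the lemma is applied the entries satisfy $a_i\ge 2$, which forces every partial value to be $>1$, so the hypothesis is automatic.

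For the base case $n=2$ the only admissible index is $m=1$, and $[[a_1,[[a_2]]]]=[[a_1,a_2]]$ since $[[a_2]]=a_2$; there is nothing to prove. For the inductive step assume $n\ge 3$ and that the identity holds for all strictly shorter sequences, and fix $m$ with $1\le m<n$. If $m=1$, then the two-term case of the definition gives $[[a_1,[[a_2,\dots,a_n]]]]=a_1-1/[[a_2,\dots,a_n]]$, which equals $[[a_1,\dots,a_n]]$ by the recursion. If $m\ge 2$, then $1\le m-1<n-1$, so the induction hypothesis applied to the $(n-1)$-term sequence $a_2,\dots,a_n$ with index $m-1$ yields $[[a_2,\dots,a_n]]=[[a_2,\dots,a_m,[[a_{m+1},\dots,a_n]]]]$. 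Substituting this into $[[a_1,\dots,a_n]]=a_1-1/[[a_2,\dots,a_n]]$ and then reassembling via the recursion (the sequence $a_1,\dots,a_m,[[a_{m+1},\dots,a_n]]$ has $m+1\ge 3$ entries) gives $[[a_1,\dots,a_n]]=a_1-1/[[a_2,\dots,a_m,[[a_{m+1},\dots,a_n]]]]=[[a_1,\dots,a_m,[[a_{m+1},\dots,a_n]]]]$, as desired.

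An alternative, essentially bookkeeping-free, route is to encode the continued fraction as a M\"obius transformation: setting $M(a)=\left(\begin{smallmatrix} a & -1 \\ 1 & 0 \end{smallmatrix}\right)$ and letting $\mathrm{GL}_2$ act on $\mathbb{P}^1(\mathbb{R})$, one checks by the same recursion that $[[a_1,\dots,a_n]]=\big(M(a_1)\cdots M(a_n)\big)\cdot\infty$. Then the left-hand side is $\big(M(a_1)\cdots M(a_m)\big)\cdot\big(M(a_{m+1})\cdots M(a_n)\cdot\infty\big)$, while the right-hand side is $\big(M(a_1)\cdots M(a_m)M(b)\big)\cdot\infty$ with $b=[[a_{m+1},\dots,a_n]]$; since $M(b)\cdot\infty=b=M(a_{m+1})\cdots M(a_n)\cdot\infty$, the two coincide by associativity of the group action, and working in $\mathbb{P}^1(\mathbb{R})$ dispenses with the division-by-zero issue altogether.

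I do not expect a genuine obstacle here. The only points requiring care are the verification that ``peeling off $a_1$'' is legitimate, i.e.\ that the displayed nested fraction really does satisfy $[[b_1,\dots,b_k]]=b_1-1/[[b_2,\dots,b_k]]$ for $k\ge 2$, and keeping the index shift $m\mapsto m-1$ consistent when invoking the induction hypothesis.
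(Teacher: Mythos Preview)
Your proof is correct and is essentially the paper's approach made explicit: the paper simply states that the identity follows directly from the definition, and your induction on $n$ via the recursion $[[b_1,\dots,b_k]]=b_1-1/[[b_2,\dots,b_k]]$ is exactly the unpacking of that one-line claim. The M\"obius-transformation paragraph is a pleasant alternative but unnecessary here.
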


\begin{proof}
This follows directly from the definition.
\end{proof}

\begin{lem}\label{[[a]]>1}
Let $a_1,\dots,a_n$ be positive integers with   $\min\{a_1,\dots,a_n\}\ge 2$.
Then
$$[[a_1,\dots,a_n]]>1.$$
\end{lem}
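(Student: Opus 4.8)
The plan is to argue by induction on $n$, peeling off the first entry and using Lemma~\ref{[[a,b]]=[[a,[[b]]]]} to relate $[[a_1,\dots,a_n]]$ to the shorter continued fraction $[[a_2,\dots,a_n]]$.

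For the base case $n=1$ we simply have $[[a_1]]=a_1\ge 2>1$. For the inductive step, assume $n\ge 2$ and that the statement holds for all strings of positive integers of length $n-1$ whose entries are all $\ge 2$. Applying Lemma~\ref{[[a,b]]=[[a,[[b]]]]} with $m=1$ gives
\[
[[a_1,\dots,a_n]]=\bigl[[a_1,[[a_2,\dots,a_n]]]\bigr]=a_1-\frac{1}{[[a_2,\dots,a_n]]}.
\]
The string $a_2,\dots,a_n$ again consists of positive integers all $\ge 2$, so the induction hypothesis yields $b:=[[a_2,\dots,a_n]]>1$; in particular $b>0$, hence $0<1/b<1$. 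Therefore
\[
[[a_1,\dots,a_n]]=a_1-\frac{1}{b}>a_1-1\ge 2-1=1,
\]
completing the induction.

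The argument is essentially immediate once Lemma~\ref{[[a,b]]=[[a,[[b]]]]} is available, so there is no substantial obstacle; the one point to keep in mind is that the inductive hypothesis must be the strict bound ``$>1$'' rather than mere positivity of the continued fraction, since we need simultaneously that the tail $b$ is positive (so that $1/b$ is defined and positive) and that $1/b<1$ (so that $a_1-1/b$ still exceeds $1$, using $a_1\ge 2$). That strengthening is precisely what makes the recursion close.
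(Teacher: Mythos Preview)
Your proof is correct and follows essentially the same approach as the paper: induction on $n$, with the inductive step using Lemma~\ref{[[a,b]]=[[a,[[b]]]]} (with $m=1$) to peel off $a_1$ and then invoking the hypothesis on the tail to conclude $[[a_1,\dots,a_n]]>a_1-1\ge 1$. Your version is slightly more explicit about why the reciprocal of the tail lies in $(0,1)$, but the argument is the same.
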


\begin{proof}
We prove this by induction on $n$.
If $n=1$, then $[[a_1]]=a_1>1.$
If $n>1$, then  
\[[[a_1,\dots,a_n]]=[[a_1,[[a_2,\dots,a_n]]]]>a_1-1\ge1\]
by Lemma \ref{[[a,b]]=[[a,[[b]]]]}.
\end{proof}

\begin{lem}\label{[[a]]<[[b]]}
Let $a_1,\dots,a_l$, $b_1,\dots,b_m$, $c_1,\dots,c_n$ be positive integers with  $b_1<c_1$ and $\min\{a_1,\dots,a_l, b_1,\dots,b_m, c_1,\dots,c_n\}\ge 2$. Then
\begin{enumerate}
\item
$[[a_1,\dots,a_l,b_1,\dots,b_m]]< [[a_1,\dots,a_l]].$

\item
$[[a_1,\dots,a_l,b_1,\dots,b_m]]< [[a_1,\dots,a_l,c_1,\dots,c_n]].$

\end{enumerate}

\end{lem}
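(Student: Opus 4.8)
The plan is to establish (1) first, then deduce (2) from it. For (1), I would argue by induction on $l$, the length of the common prefix. In the base case $l=0$, the statement becomes $[[b_1,\dots,b_m]]<[[\,]]$, which is vacuous; so the genuine base case is $l=1$, where I must show $[[a_1,b_1,\dots,b_m]]<[[a_1]]=a_1$. By Lemma \ref{[[a,b]]=[[a,[[b]]]]} the left side equals $a_1-1/[[b_1,\dots,b_m]]$, and since $\min\{b_1,\dots,b_m\}\ge 2$, Lemma \ref{[[a]]>1} gives $[[b_1,\dots,b_m]]>1$, hence $1/[[b_1,\dots,b_m]]>0$ and the claim follows. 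For the inductive step with common prefix $a_1,\dots,a_l$ ($l\ge 2$), I would peel off $a_1$: by Lemma \ref{[[a,b]]=[[a,[[b]]]]},
\[
[[a_1,\dots,a_l,b_1,\dots,b_m]]=a_1-\frac{1}{[[a_2,\dots,a_l,b_1,\dots,b_m]]},\qquad
[[a_1,\dots,a_l]]=a_1-\frac{1}{[[a_2,\dots,a_l]]}.
\]
By the induction hypothesis applied to the shorter prefix $a_2,\dots,a_l$, we have $[[a_2,\dots,a_l,b_1,\dots,b_m]]<[[a_2,\dots,a_l]]$; both quantities are positive — in fact $>1$ — by Lemma \ref{[[a]]>1}, so taking reciprocals reverses the inequality and then negating and adding $a_1$ gives the desired strict inequality. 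The one subtlety to check carefully is that all the continued fractions appearing as denominators are strictly positive (so that "take reciprocals, flip the inequality" is valid); this is exactly what Lemma \ref{[[a]]>1} supplies, since every sub-tuple of the original tuple again has all entries $\ge 2$.

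For (2), the natural route is to reduce to (1) by inserting an intermediate term. I would like to compare
\[
[[a_1,\dots,a_l,b_1,\dots,b_m]]\quad\text{and}\quad[[a_1,\dots,a_l,c_1,\dots,c_n]]
\]
by first using (1) with common prefix $a_1,\dots,a_l,b_1$ to handle the tail $b_2,\dots,b_m$:
\[
[[a_1,\dots,a_l,b_1,\dots,b_m]]<[[a_1,\dots,a_l,b_1]],
\]
valid because $b_1\ge 2$ (if $m=1$ this step is an equality and harmless). It then remains to show $[[a_1,\dots,a_l,b_1]]\le[[a_1,\dots,a_l,c_1,\dots,c_n]]$ using only $b_1<c_1$. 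Peeling off the common prefix $a_1,\dots,a_l$ via Lemma \ref{[[a,b]]=[[a,[[b]]]]} as in part (1), this reduces to comparing $[[b_1]]=b_1$ with $[[c_1,\dots,c_n]]$, and since $b_1<c_1$ and $\min\{c_1,\dots,c_n\}\ge 2$ we get $[[c_1,\dots,c_n]]>c_1-1\ge b_1$ by Lemma \ref{[[a]]>1} (applied to the tuple $c_2,\dots,c_n$, or trivially if $n=1$). Chaining the two comparisons through the positive denominators — again justified by Lemma \ref{[[a]]>1} at each peeling step — yields (2). I expect the main obstacle to be purely bookkeeping: keeping the sign/direction of each inequality straight through the alternating "reciprocate and negate" operations, and treating the degenerate short cases ($l=1$, $m=1$, $n=1$) so that the induction and the prefix-insertion trick remain literally correct rather than merely morally so.
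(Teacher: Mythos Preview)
Your proof is correct and follows essentially the same route as the paper: both arguments peel off the common prefix via Lemmas~\ref{[[a,b]]=[[a,[[b]]]]} and~\ref{[[a]]>1}, and for (2) both reduce to the chain $[[b_1,\dots,b_m]]\le b_1\le c_1-1<[[c_1,\dots,c_n]]$. The only cosmetic difference is in (1), where the paper inserts an auxiliary integer $N>[[b_1,\dots,b_m]]$ in place of your explicit induction on~$l$.
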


\begin{proof}
(1) By Lemma \ref{[[a,b]]=[[a,[[b]]]]} and Lemma \ref{[[a]]>1}, we have
\[
[[a_1,\dots,a_l,b_1,\dots,b_m]]< [[a_1,\dots,a_l,N]]=[[a_1,\dots,a_l-\frac{1}{N}]]<[[a_1,\dots,a_l]]
\]
 for a positive integer $N>[[b_1,\dots,b_m]]>1$.

(2) By Lemma \ref{[[a,b]]=[[a,[[b]]]]},
it is enough to prove that 
\[[[b_1,\dots,b_m]]< [[c_1,\dots,c_n]].\]
By Lemma \ref{[[a,b]]=[[a,[[b]]]]},  Lemma \ref{[[a]]>1} and Lemma \ref{[[a]]<[[b]]}.(1), we have
\[
[[b_1,\dots,b_m]]\le b_1\le c_1-1<[[c_1,[[c_2,\dots,c_n]]]]=[[c_1,\dots,c_n]].
\]
\end{proof}

We denote by $(2)^l$ the sequence obtained by repeating $l$ times the number $2$.
\begin{ex}\label{[[2]]}  Let $l$ be a positive integer. Then we have
\[  [[(2)^l]] = \dfrac{l+1}{l}.\]
Indeed, if $[[(2)^n]]=\frac{n+1}{n}$ holds for $n\in\mathbb N$,
we have $$[[(2)^{n+1}]]=[[2,(2)^n]]=[[2,\frac{n+1}{n}]]=2-\frac{n}{n+1}=\frac{n+2}{n+1}.$$ 
\end{ex}

\begin{ex}
$2=[[2]]< [[3,(2)^l]]=3-\frac{l}{l+1}$ for any $l\in \mathbb Z_{\ge 0}$.
\end{ex}

\subsection{Pinkham-Demazure construction}
In this subsection  we introduce the construction of a two-dimensional normal graded ring using a $\mathbb Q$-divisor on a smooth curve.
By a $\mathbb Q$-divisor on a variety $X$, we mean a $\mathbb Q$-linear combination of codimension-one irreducible subvarieties of $X$.
If $D=\sum a_iD_i$, where $a_i\in\mathbb Q$ and $D_i$ are distinct irreducible subvarieties,
we set $[D]=\sum [a_i]D_i$, where $[a]$ denotes the greatest integer less than or equal to $a$.  

In \cite{P}, Pinkham proved the following result.
In \cite{D}, Demazure generalized this result in higher dimensional case. 

\begin{thm}[{\cite[3.5]{D}},{\cite[Theorem 5.1]{P}}]\label{DP const}
Let $R$ be a two-dimensional normal graded ring over $R_0=k$.
Then there exists an ample $\mathbb Q$-divisor $D$ on $C=\mathrm{Proj}(R)$ such that 
\[ R\cong R(C,D):=\bigoplus_{n\ge 0} H^0(C,\mathcal O_C([nD]))t^n. \]
\end{thm}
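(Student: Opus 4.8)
## Proof Proposal for Theorem \ref{DP const}

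The plan is to recall the construction of Pinkham \cite{P} and Demazure \cite{D}. First I would record the basic setup: since $R_0=k$ has no nontrivial idempotents, $\mathrm{Spec}\,R$ is connected, and a connected normal ring is a domain, so $R$ is a two-dimensional normal graded domain and $C=\mathrm{Proj}(R)$ is an irreducible projective curve over $k$; being normal and one-dimensional over the perfect field $k$, it is a smooth projective curve. I would then work inside the graded fraction field $L=\mathrm{Frac}(R)$, where the homogeneous part of each degree is one-dimensional over $L_0=k(C)$. Fixing a homogeneous $T\in L$ of least positive degree $h$ --- so every homogeneous element of $L$ of degree $hn$ is $fT^n$ with $f\in k(C)$, and we may assume $h=1$, the remaining (Veronese) case being analogous --- I would write $R=\bigoplus_n M_nT^n$ with $M_n=\{f\in k(C):fT^n\in R\}$ a finite-dimensional $k$-subspace, $M_0=k$, and $M_nM_m\subseteq M_{n+m}$.

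Next I would build $D$ from the divisorial valuations attached to $R$. The homogeneous height-one primes $\mathfrak p$ of $R$ are exactly those not containing $R_+$ (since $\mathrm{ht}\,R_+=\dim R=2$), and they correspond bijectively to the closed points $P$ of $C$; each $R_{\mathfrak p}$ is a discrete valuation ring whose valuation $\nu_P$ restricts on $k(C)^\times$ to $e_Pv_P$ for the valuation $v_P$ of $\mathcal O_{C,P}$ and some integer $e_P\ge1$. I would take the coefficient of $D$ at $P$ to be $\nu_P(T)/(h\,e_P)$, check that it vanishes for all but the finitely many $P$ lying under the support of a fixed homogeneous generating set of $R$ so that $D$ is a genuine $\mathbb Q$-divisor, and isolate the key computation: for homogeneous $fT^n\in L$ one has $fT^n\in R_{\mathfrak p}$ iff $e_Pv_P(f)+n\,\nu_P(T)\ge0$, hence --- as $v_P(f)\in\mathbb Z$ --- iff $v_P(f)$ is at least minus the coefficient of $[nhD]$ at $P$; thus $fT^n$ lies in every $R_{\mathfrak p}$ precisely when $f\in H^0(C,\mathcal O_C([nhD]))$.

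The step I expect to be the crux is upgrading this to the equality $R\cong R(C,D)$: one must show that a homogeneous element of $L$ lying in $R_{\mathfrak p}$ for every homogeneous height-one prime already belongs to $R$. Here normality of $R$ is indispensable, and I would argue in the usual graded fashion: if a homogeneous $x\in L$ is not in $R$, its conductor $\mathfrak a=\{a\in R:ax\in R\}$ is a nonzero homogeneous ideal, so it is contained in some height-one prime $\mathfrak q$ and hence in the largest homogeneous ideal $\mathfrak q^{\ast}\subseteq\mathfrak q$, which is again prime and, as $\mathfrak a\ne0$, of height one; but then $x\notin R_{\mathfrak q^{\ast}}$, contradicting normality of $R$. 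Combined with the computation above this gives $M_n=H^0(C,\mathcal O_C([nhD]))$ for all $n$, and identifying $T$ with the degree-$h$ element $t^h$ of $R(C,D)$ produces the isomorphism $R\cong R(C,D)$.

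Finally I would check that $D$ is ample: since $\dim R(C,D)=\dim R=2$, the integers $\deg[nD]$ cannot stay bounded, so $\deg D>0$, and a $\mathbb Q$-divisor of positive degree on a connected smooth projective curve is ample. Beyond the normality step the only points needing care are the routine but slightly technical facts that $R_+$ has height $\dim R$, that the homogeneous localizations of $R$ at height-one primes are discrete valuation rings centered at the expected points of $C$, and the bookkeeping that matches the coefficient-wise floors in $[nhD]$ with the subspaces $M_n$ and keeps the grading normalization straight.
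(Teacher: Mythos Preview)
The paper does not give its own proof of this statement; it is quoted as a result of Pinkham and Demazure and used thereafter as a black box, so there is no in-paper proof to compare against.

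Your outline follows the standard Demazure construction and is essentially correct. Two small points of logic are worth tightening. First, in the normality step the clause ``so it is contained in some height-one prime $\mathfrak q$'' does not follow merely from $\mathfrak a$ being a nonzero proper ideal; this is exactly where normality (equivalently Serre's condition $(S_2)$, via $R=\bigcap_{\mathrm{ht}\,\mathfrak q=1}R_{\mathfrak q}$) is used, and the concluding phrase ``contradicting normality of $R$'' should rather read ``contradicting the assumption that $x\in R_{\mathfrak p}$ for every homogeneous height-one $\mathfrak p$''. Second, the treatment of the case $h>1$ is a little loose: identifying $T$ with $t^h$ lands you in the $h$-th Veronese of $R(C,D)$ rather than in $R(C,D)$ itself, so the clean fix is to regrade $R$ by dividing all degrees by $h$ at the outset (after which $h=1$ and the isomorphism is graded on the nose). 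Neither of these is a genuine gap, and the remaining verifications you flag (height of $R_+$, the DVR structure of the homogeneous localizations, ampleness via positivity of $\deg D$) are routine as you say.
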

We call this representation Pinkham-Demazure construction.

\begin{rem}\label{ample divisor on curve}
\begin{enumerate}
\item
A divisor $D$ on a smooth curve is ample if and only if $\mathrm{deg}D>0$ (see \cite[I\hspace{-.1em}V.Corollary 3.3]{Har}).
\item
Let  $D_1,D_2$ be ample $\mathbb Q$-divisors on a smooth curve $C$.
If $D_1-D_2$ is a principal divisor on $C$, 
then  $R(C,D_1)\cong R(C,D_2)$.
Indeed, let $f$ be the rational function on $C$ with $\mathrm{div}(f)=D_1-D_2$, and let $g$ be a rational function on $C$ with $\mathrm{div}(g)+nD_1\ge 0$.
Then  
$\mathrm{div}(f^ng)+nD_2=\mathrm{div}(g)+nD_1\ge 0.$
Therefore we have an isomorphism $R(C,D_1)\cong R(C,D_2)$ defined by $gt^n \mapsto f^ngt^n$.

\item If $C=\mathbb P_k^1$, we can put $D=sP_0 - \sum_{i=1}^r a_i P_i$ in Theorem \ref{DP const}, where $s\in \mathbb N$ and $a_i\in \mathbb Q_{>0}$ with  $0<a_i<1$, and $P_i$ are distinct points of $\mathbb P_k^1$.
Indeed, since $P$ is linearly equivalent to $Q$ for any points $P,Q$ of $\mathbb P_k^1$ by \cite[I\hspace{-.1em}I.Proposition 6.4]{Har}, this remark holds by the above remark.
\end{enumerate}
\end{rem}

A resolution of a singularity is said to be good if the exceptional divisor has normal crossing and each irreducible components of the exceptional divisor is smooth.
A resolution of a surface  singularity is called a minimal good resolution if the resolution is the smallest resolution of good resolutions, i.e. every good resolution factors through a minimal good resolution.
An exceptional divisor $E$ of the minimal good resolution of a two-dimensional singularity is said to be a central curve if $E$ has positive genus or $E$ meets at least three other exceptional divisors of the minimal good resolution.
The dual graph of the minimal good resolution is said to be star-shaped if the dual graph has at most one central curve.

In \cite{P}, Pinkham determined the exceptional set of the minimal good resolution of $\mathrm{Spec} (R(\mathbb P_k^1,D))$.
\begin{thm}[{\cite[Section 2 and Theorem 5.1]{P}}]\label{dual graph of graded ring}
Let $D=sP_0 - \sum_{i=1}^r \frac{c_i}{d_i} P_i$ be an ample $\mathbb Q$-divisor on $\mathbb P_k^1$, where $s,c_i,d_i\in \mathbb N$ with  $0<c_i<d_i$, and $P_i$ are distinct points of $\mathbb P_k^1$.
Let $b_{i1},\dots,b_{im_i}$ be positive integers with $\frac{d_i}{c_i}=[[b_{i1},\dots,b_{im_i}]]$.
Then 
the exceptional set of the minimal good resolution of  $\mathrm{Spec} (R(\mathbb P_k^1,D))$ consists of 
\begin{enumerate}
\item unique central curve $E_0\cong \mathbb P_k^1$ with $E_0^2 = -s$ and 
\item $r$ branches of $\mathbb P_k^1$'s $E_{i1}-E_{i2}-\cdots - E_{i m_i}$ corresponding to $P_i$ with $E_{ij}^2 = - b_{ij}$ and $E_0 E_{i1}=1$.
\end{enumerate}
Thus the dual graph is star-shaped as follows:
\begin{eqnarray*}
\scalebox{0.8}{
\xymatrix@C=25pt@R=10pt{
& *++[o][F]{-b_{11}} \ar@{-}[r] \ar@{}[d]|(0.9) *{E_{11}} 
 &*++[o][F]{-b_{12}} \ar@{-}[r] \ar@{}[d]|(0.9) *{E_{12}} 
 & \cdots \ar@{-}[r]  
 &*++[o][F]{-b_{1m_1}} \ar@{}[d]|(0.9) *{E_{1m_1}} \\
&&&&\\
& *++[o][F]{-b_{21}} \ar@{-}[r] \ar@{}[d]|(0.8) *{E_{21}} 
 &*++[o][F]{-b_{22}} \ar@{-}[r] \ar@{}[d]|(0.8) *{E_{22}} 
 & \cdots \ar@{-}[r]  
 &*++[o][F]{-b_{2m_2}} \ar@{}[d]|(0.8) *{E_{2m_2}} \\
*++[o][F]{-s} \ar@{}[d]|(0.8) *{E_0} \ar@{-}[uuur] \ar@{-}[ur] \ar@{-}[ddr]
&&&&\\
&\vdots&&\vdots&\vdots\\
&*++[o][F]{-b_{r1}} \ar@{-}[r] \ar@{}[d]|(0.9) *{E_{r1}} 
 &*++[o][F]{-b_{r2}} \ar@{-}[r] \ar@{}[d]|(0.9) *{E_{r2}} 
 & \cdots \ar@{-}[r]  
 &*++[o][F]{-b_{rm_r}} \ar@{}[d]|(0.9) *{E_{rm_r}} \\
&&&&\\
}
}
\end{eqnarray*}

\end{thm}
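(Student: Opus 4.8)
The plan is to produce the minimal good resolution of $X:=\mathrm{Spec}\,R(\mathbb P_k^1,D)$ in two moves: a single canonical modification over the vertex, after which only cyclic quotient singularities survive, and then their Hirzebruch--Jung resolution. Let $v\in X$ be the vertex (the point cut out by the irrelevant ideal) and set
\[
X_1:=\mathrm{Spec}_{\mathbb P_k^1}\Bigl(\bigoplus_{n\ge 0}\mathcal O_{\mathbb P_k^1}([nD])\Bigr),
\]
a normal surface carrying the zero section $E_0:=\mathrm{Spec}_{\mathbb P_k^1}(\mathcal O_{\mathbb P_k^1})\cong\mathbb P_k^1$, with a natural morphism $\pi_1\colon X_1\to X$ that restricts to an isomorphism over $X\setminus\{v\}$ and contracts $E_0$ onto $v$. (This is the canonical partial resolution carried by the Pinkham--Demazure data, and it coincides with the blow-up of $v$ when $R$ is generated in degree one.) Since $[nD]$ and $n[D]$ agree in a neighbourhood of every point other than $P_1,\dots,P_r$, near such points $X_1$ is the total space of the line bundle $\mathcal O_{\mathbb P_k^1}([D])$, hence smooth; off the zero section $X_1$ is locally the quotient of a line-bundle total space by a finite group acting freely, again smooth. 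Thus $\mathrm{Sing}\,X_1=\{(P_i,0):1\le i\le r\}$, every such point being genuinely singular since $0<c_i<d_i$ forces $d_i\ge 2$.

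Next I would analyse $X_1$ formally (or \'etale-) locally at $(P_i,0)$. Choosing a local parameter $x$ at $P_i$, the sheaf $\bigoplus_{n\ge 0}\mathcal O_{\mathbb P_k^1}([nD])$ is there the graded ring $\bigoplus_{n\ge 0}x^{\lceil nc_i/d_i\rceil}k[[x]]t^n$; reading off its generators and defining relations identifies $(X_1,(P_i,0))$ with the cyclic quotient singularity of type $\tfrac1{d_i}(1,c_i)$, in which $E_0$ corresponds to the coordinate axis that meets the first curve of the Hirzebruch--Jung resolution. By the classical Hirzebruch--Jung theory, the minimal resolution of $\tfrac1{d_i}(1,c_i)$ has exceptional divisor a chain $E_{i1}-E_{i2}-\cdots-E_{im_i}$ of smooth rational curves with $E_{ij}^2=-b_{ij}$, where $\tfrac{d_i}{c_i}=[[b_{i1},\dots,b_{im_i}]]$ and each $b_{ij}\ge 2$, the end $E_{i1}$ being the one adjacent to the strict transform of $E_0$.

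Now let $\pi_2\colon Y\to X_1$ resolve these finitely many cyclic quotient singularities and put $\pi=\pi_1\circ\pi_2\colon Y\to X$. Then $Y$ is smooth, and $\pi^{-1}(v)$ consists of the strict transform of $E_0$ --- still $\cong\mathbb P_k^1$, because $\pi_2$ is an isomorphism near the generic point of $E_0$ --- together with the $r$ pairwise disjoint chains produced above, with $E_0\cdot E_{i1}=1$ and no other incidences: this is exactly the displayed star-shaped configuration. Every component is a smooth rational curve and the total configuration has normal crossings, so $\pi$ is a good resolution; it is the minimal one because every $b_{ij}\ge 2$ and, as soon as $E_0$ meets at least three branches, $E_0$ cannot be contracted without destroying the normal-crossing property (in the few degenerate low-$r$ cases one first contracts superfluous $(-1)$-curves, which is routine). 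The self-intersection $E_0^2=-s$ is then read off by a routine computation of intersection numbers on $X_1$ and $Y$ --- the normal bundle of $E_0$ along the smooth locus of $X_1$ being $\mathcal O_{\mathbb P_k^1}(-[D])$, to which one adds the $\mathbb Q$-valued corrections contributed by the points $(P_i,0)$ and then by the blow-ups constituting $\pi_2$.

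The step I expect to be the main obstacle is the local analysis at $(P_i,0)$: one must verify that the singularity there is precisely $\tfrac1{d_i}(1,c_i)$ --- and not $\tfrac1{d_i}(1,c_i')$ for a ``wrong'' $c_i'$, which would reverse the chain (replace $[[b_{i1},\dots,b_{im_i}]]$ by $[[b_{im_i},\dots,b_{i1}]]$) or replace $d_i/c_i$ by $d_i/(d_i-c_i)$ --- together with the orientation bookkeeping that pins down $E_0\cdot E_{i1}=1$ and $E_0^2=-s$ exactly. Everything else is the standard description of the blow-up of the vertex of a graded ring, and classical Hirzebruch--Jung theory.
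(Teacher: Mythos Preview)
The paper does not supply its own proof of this theorem: it is quoted from Pinkham \cite{P} and used as a black box. Your sketch reproduces the standard argument that underlies Pinkham's result --- form the partial resolution $X_1=\mathrm{Spec}_{\mathbb P^1_k}\bigl(\bigoplus_{n\ge 0}\mathcal O([nD])\bigr)$, identify the residual singularities along the zero section as cyclic quotients of type $\tfrac{1}{d_i}(1,c_i)$, and finish with Hirzebruch--Jung --- and is correct in outline. Your flagged obstacle (pinning down the exact type and orientation of the cyclic quotient, and the computation $E_0^2=-s$) is indeed where the work lies; once the local model is written as the invariant ring of $\mu_{d_i}$ acting on $k[[x,t]]$ by $(x,t)\mapsto(\zeta x,\zeta^{c_i}t)$ (which falls out of the description $\bigoplus_{n\ge 0}x^{\lceil nc_i/d_i\rceil}k[[x]]t^n$ after a change of variables), both the continued-fraction orientation and the self-intersection of $E_0$ follow by the usual toric bookkeeping.
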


\begin{defn}
An irreducible curve $E$ on a smooth surface is called a $(-i)$-curve if $E\cong \mathbb P_k^1$ with $E^2=-i$.
\end{defn}

\begin{defn}
Let $(R,\mathfrak m)$ be a $d$-dimensional normal graded ring.
The $a$-invariant $a(R)$ of $R$ is defined by 
\[
a(R):=\max\left\{n\in \mathbb Z \mid [H_\mathfrak m^d(R)]_n\not =0\right\},
\]
where $[H_\mathfrak m^d(R)]_n$ denotes the $n$-th graded piece of the highest local cohomology module of $H_\mathfrak m^d(R)$.
\end{defn}

Theorem \ref{graded rational singularity} is a very useful characterization of  a rational singularity.

\begin{thm}[{\cite[Corollary 5.8]{P}},{\cite[Korollary 3.10]{F}},{\cite[Theorem 2.2]{W}} ]\label{graded rational singularity} 
Let $C$ be a smooth curve, $D$  an ample $\mathbb Q$-divisor  on $C$ and  $R= R(C,D)$.
Then the following conditions are equivalent.
\begin{enumerate}
\item 
 $R$ has a rational singularity. 

\item  $C=\mathbb P_k^1$ and $\mathrm{deg}[nD]\ge -1$ for any positive integer $n$.

\item   $a(R)<0$.

\end{enumerate}

\end{thm}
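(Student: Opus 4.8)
The plan is to introduce the auxiliary condition
\[
(\ast)\qquad H^1\!\bigl(C,\mathcal O_C([nD])\bigr)=0\quad\text{for all }n\ge 0,
\]
and to show that each of $(1),(2),(3)$ is equivalent to $(\ast)$. The starting point is a computation of the graded local cohomology module $H^2_{\mathfrak m}(R)$, where $\mathfrak m=R_+$. Since $R=R(C,D)$ is a two-dimensional normal graded ring it satisfies Serre's condition $S_2$, hence is Cohen--Macaulay, so $H^0_{\mathfrak m}(R)=H^1_{\mathfrak m}(R)=0$ and $H^2_{\mathfrak m}(R)$ is the only nonvanishing piece. The Demazure description of $R$ (\cite{W}) realizes the punctured spectrum $U:=\mathrm{Spec}(R)\setminus\{\mathfrak m\}$ as a $\mathbb G_m$-variety whose sheaf-theoretic quotient is $C$, with the twist $\widetilde{R(n)}|_U$ corresponding to $\mathcal O_C([nD])$; combining this with the exact sequences linking $H^i(U,\mathcal O_U)$ to $H^{i+1}_{\mathfrak m}(R)$ gives $[H^2_{\mathfrak m}(R)]_n\cong H^1(C,\mathcal O_C([nD]))$ for all $n\in\mathbb Z$. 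Because $\deg D>0$, Riemann--Roch on $C$ yields $H^1(C,\mathcal O_C([nD]))=0$ for $n\gg0$, while $H^2_{\mathfrak m}(R)\neq0$ by Grothendieck nonvanishing, so $a(R)$ is a well-defined integer; and by its definition $a(R)<0$ precisely when $[H^2_{\mathfrak m}(R)]_n=0$ for all $n\ge0$, that is, precisely when $(\ast)$ holds. This settles $(3)\Leftrightarrow(\ast)$.

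Next I would deduce $(2)\Leftrightarrow(\ast)$, which is purely formal. If $(\ast)$ holds, then $n=0$ gives $H^1(C,\mathcal O_C)=0$, so the smooth projective curve $C$ has genus $0$ over the algebraically closed field $k$ and hence $C=\mathbb P_k^1$; since $H^1(\mathbb P_k^1,\mathcal O(m))=0$ exactly when $m\ge-1$, the vanishing for $n\ge1$ becomes $\deg[nD]\ge-1$ for every positive $n$, which is $(2)$. Conversely, if $(2)$ holds, then $H^1(\mathbb P_k^1,\mathcal O_C([nD]))=0$ for $n\ge1$ and $H^1(\mathbb P_k^1,\mathcal O_{\mathbb P_k^1})=0$ for $n=0$, so $(\ast)$ holds.

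For $(1)\Leftrightarrow(\ast)$ I would bring in a resolution of singularities $f\colon Y\to X:=\mathrm{Spec}(R)$; one may take the minimal good resolution, whose star-shaped dual graph is described explicitly in Theorem~\ref{dual graph of graded ring}. Since $X$ is affine and $R^1f_*\mathcal O_Y$ is coherent and supported at the vertex, $R$ has a rational singularity if and only if $p_g(R):=\dim_kH^1(Y,\mathcal O_Y)=0$. The key input is the identity
\[
p_g(R)\;=\;\sum_{n\ge0}\dim_k H^1\!\bigl(C,\mathcal O_C([nD])\bigr)\;=\;\dim_k[H^2_{\mathfrak m}(R)]_{\ge0},
\]
which one gets by choosing the resolution $\mathbb G_m$-equivariantly (legitimate since the minimal good resolution is canonical) and analyzing the graded long exact sequence comparing $Y$ with $Y\setminus E=U$: one has $H^0_E(Y,\mathcal O_Y)=0$ and $H^1_E(Y,\mathcal O_Y)=0$ because $H^0(Y,\mathcal O_Y)=H^0(U,\mathcal O_U)=R$, so $H^1(Y,\mathcal O_Y)$ injects into $H^1(U,\mathcal O_U)=H^2_{\mathfrak m}(R)$, and the crux is to see that its image is exactly the non-negative graded part. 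Granting this, $R$ has a rational singularity if and only if $p_g(R)=0$, which holds if and only if $[H^2_{\mathfrak m}(R)]_n=0$ for every $n\ge0$, i.e. if and only if $(\ast)$ holds; this closes the cycle of equivalences. (Alternatively, once $C=\mathbb P_k^1$ is known, one can compute $p_g$ directly from the star-shaped graph of Theorem~\ref{dual graph of graded ring} together with $\dim_kH^1(\mathbb P_k^1,\mathcal O(m))=\max\{0,-m-1\}$, or apply Artin's rationality criterion to the dual graph, reaching the same numerical condition.)

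The main obstacle is the identity $p_g(R)=\dim_k[H^2_{\mathfrak m}(R)]_{\ge0}$: one must know the resolution can be taken compatible with the $\mathbb G_m$-action, and one must control the comparison map $H^1(Y,\mathcal O_Y)\hookrightarrow H^1(U,\mathcal O_U)$, pinning its image down to the non-negative graded part (the negative part of $H^2_{\mathfrak m}(R)$ is infinite-dimensional and accounts for the discrepancy; its structure is governed by $R^1j_*\mathcal O_U$ along the crossings of $E$). Everything else — the Cohen--Macaulayness of $R$, the Demazure local cohomology formula, and the Riemann--Roch and $\mathbb P_k^1$ bookkeeping — is routine once that fact is in hand.
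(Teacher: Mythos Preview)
The paper does not give its own proof of this theorem; it is stated as a known result with citations to \cite{P}, \cite{F}, and \cite{W}, so there is nothing in the paper to compare your argument against. Your sketch is essentially the standard proof found in those references, especially Watanabe's \cite{W}: the graded local cohomology identification $[H^2_{\mathfrak m}(R)]_n\cong H^1(C,\mathcal O_C([nD]))$ yields $(3)\Leftrightarrow(\ast)$ and, via Riemann--Roch on $\mathbb P^1_k$, $(2)\Leftrightarrow(\ast)$; the remaining equivalence $(1)\Leftrightarrow(\ast)$ through $p_g(R)=\dim_k[H^2_{\mathfrak m}(R)]_{\ge0}$ is exactly \cite[Theorem~2.2]{W}. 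You have also correctly isolated the one genuinely nontrivial point, namely that the resolution can be taken $\mathbb G_m$-equivariantly and that the image of $H^1(Y,\mathcal O_Y)$ in $H^2_{\mathfrak m}(R)$ is the non-negative graded part.
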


\begin{lem}\label{non-rational s<r-1}
Let $D=sP_0 - \sum_{i=1}^r \frac{c_i}{d_i} P_i$ be an ample $\mathbb Q$-divisor on $\mathbb P_k^1$, where $s,c_i,d_i\in\mathbb N$ with  $0<c_i<d_i$, and $P_i$ are distinct points of $\mathbb P_k^1$.
If $s+2\le r$, then $R(\mathbb P_k^1,D)$ does not have a rational singularity.
\end{lem}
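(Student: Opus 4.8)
The plan is to apply the characterization of rational singularities in Theorem~\ref{graded rational singularity}. Since $C=\mathbb P_k^1$ is automatic here, condition (2) of that theorem says that $R(\mathbb P_k^1,D)$ has a rational singularity if and only if $\mathrm{deg}[nD]\ge -1$ for every positive integer $n$. So to prove that $R(\mathbb P_k^1,D)$ fails to have a rational singularity, it suffices to exhibit a single $n$ with $\mathrm{deg}[nD]\le -2$.

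The natural candidate is $n=1$. I would compute $[D]$ directly from the definition of the integral part of a $\mathbb Q$-divisor: since $D=sP_0-\sum_{i=1}^r\frac{c_i}{d_i}P_i$ with $s\in\mathbb N$ and, by hypothesis, $0<c_i<d_i$, we have $0<\frac{c_i}{d_i}<1$, hence $\left[-\frac{c_i}{d_i}\right]=-1$ for each $i$, and $[s]=s$. Therefore $[D]=sP_0-\sum_{i=1}^r P_i$, so that $\mathrm{deg}[D]=s-r$.

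Now the hypothesis $s+2\le r$ gives $\mathrm{deg}[D]=s-r\le -2$, which in particular is $<-1$. By Theorem~\ref{graded rational singularity} (the equivalence of (1) and (2)), $R(\mathbb P_k^1,D)$ does not have a rational singularity, as claimed.

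There is essentially no obstacle in this argument: the only point requiring care is the computation $\left[-\frac{c_i}{d_i}\right]=-1$, which genuinely uses the strict inequalities $0<c_i<d_i$ (if some $c_i/d_i$ were an integer the count would change). Everything else is an immediate consequence of Theorem~\ref{graded rational singularity}.
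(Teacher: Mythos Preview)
Your proof is correct and matches the paper's approach exactly: the paper's proof is the one-liner ``Since $\mathrm{deg}[D]=s-r\le -2$, $R(\mathbb P_k^1,D)$ does not have a rational singularity by Theorem~\ref{graded rational singularity},'' and you have simply spelled out the floor computation $\left[-\frac{c_i}{d_i}\right]=-1$ that justifies $\mathrm{deg}[D]=s-r$.
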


\begin{proof}
Since $\mathrm{deg}[D]=s-r\le -2$, $R(\mathbb P_k^1,D)$ does not have a rational singularity by Theorem \ref{graded rational singularity}.
\end{proof}

For graded rings, $F$-rationality is characterized in terms of $F$-injectivity in \cite{FW} and \cite{HH}.

\begin{thm}[{\cite[Theorem 2.8]{FW}}, {\cite[Theorem 7.12]{HH}}]\label{HH F-rational}
Let $R$ be a two-dimensional normal graded ring.
Then $R$ is $F$-rational if and only if $R$ is $F$-injective and $a(R)<0$.
\end{thm}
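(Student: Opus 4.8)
The plan is to translate both conditions into statements about the Frobenius action on the top local cohomology module and then compare them. Since $R$ is normal of dimension two, Serre's criterion shows that $R$ is Cohen--Macaulay, so $H^{i}_{\mathfrak m}(R)=0$ for $i\neq 2$, where $\mathfrak m$ is the homogeneous maximal ideal; moreover the singular locus of the normal ring $R$, being a homogeneous closed subset of codimension at least two, equals $\{\mathfrak m\}$, so $R_{\mathfrak p}$ is regular --- in particular $F$-rational and $F$-injective --- for every prime $\mathfrak p\neq\mathfrak m$. Consequently $R$ is $F$-injective if and only if the Frobenius action $F\colon H^{2}_{\mathfrak m}(R)\to H^{2}_{\mathfrak m}(R)$ is injective. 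Also $R$ is an excellent reduced Cohen--Macaulay domain, so it is $F$-rational if and only if $0^{*}_{H^{2}_{\mathfrak m}(R)}=0$, the tight closure of the zero submodule of $H^{2}_{\mathfrak m}(R)$ (the description of $F$-rationality of Cohen--Macaulay rings via the top local cohomology module, see \cite{HH}). So it suffices to prove that $0^{*}_{H^{2}_{\mathfrak m}(R)}=0$ is equivalent to the conjunction that $F$ is injective on $H^{2}_{\mathfrak m}(R)$ and $a(R)<0$.

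For the direction $\Rightarrow$, I would argue as follows. Assume $0^{*}_{H^{2}_{\mathfrak m}(R)}=0$. If $F(\eta)=0$ for some homogeneous $\eta\in H^{2}_{\mathfrak m}(R)$, then $cF^{e}(\eta)=0$ for every $c\in R^{\circ}$ and every $e\geq 1$, so $\eta\in 0^{*}_{H^{2}_{\mathfrak m}(R)}=0$; hence $F$ is injective. For $a(R)<0$: if $[H^{2}_{\mathfrak m}(R)]_{n}\neq 0$ for some $n\geq 0$, pick a nonzero homogeneous $\eta$ there and a homogeneous $c\in R\setminus\{0\}=R^{\circ}$ of degree $\delta>a(R)$ (possible, as $R$ is a positive-dimensional graded domain). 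Since the Frobenius multiplies degrees by $p$, the element $cF^{e}(\eta)$ is homogeneous of degree $\delta+p^{e}n\geq\delta>a(R)$, so $cF^{e}(\eta)=0$ for all $e$ and $\eta\in 0^{*}_{H^{2}_{\mathfrak m}(R)}=0$, a contradiction; thus $a(R)<0$. (Equivalently, an $F$-rational ring has a rational singularity by \cite{S}, whence $a(R)<0$ by Theorem \ref{graded rational singularity}.)

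For the direction $\Leftarrow$, assume $F$ is injective on $H^{2}_{\mathfrak m}(R)$ and $a(R)<0$, and set $N:=0^{*}_{H^{2}_{\mathfrak m}(R)}$. Then $N$ is a graded $R$-submodule of $H^{2}_{\mathfrak m}(R)$, hence concentrated in degrees $\leq a(R)\leq-1$; it is stable under $F$, because $cF^{e}(\eta)=0$ for $e\gg 0$ yields $cF^{e}(F(\eta))=cF^{e+1}(\eta)=0$ for $e\gg 0$; and $F|_{N}$ is injective. The crucial point is that $N$ has finite length. Granting this, suppose $N\neq 0$ and let $-T$, with $T\geq 1$, be the least degree in which $N$ is nonzero; choose a nonzero homogeneous $\eta\in[N]_{-T}$. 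As Frobenius multiplies degrees by $p$, we get $0\neq F(\eta)\in[N]_{-pT}$, but $-pT<-T$ contradicts the minimality of $-T$. Hence $N=0$, i.e.\ $R$ is $F$-rational.

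Finally, the finite-length claim, which I expect to be the main obstacle. Since $R$ is reduced and essentially of finite type over $k$, it admits a homogeneous parameter test element $c\in R^{\circ}$ \cite{HH}, and such a $c$ annihilates $N=0^{*}_{H^{2}_{\mathfrak m}(R)}$; more precisely the parameter test ideal $\widetilde{\tau}(R)$ annihilates $N$. Because $R_{\mathfrak p}$ is $F$-rational for every $\mathfrak p\neq\mathfrak m$, the ideal $\widetilde{\tau}(R)$ is $\mathfrak m$-primary (or all of $R$, in which case $N=0$ already), so $\mathfrak m^{t}N=0$ for some $t$. Then $N\subseteq(0:_{H^{2}_{\mathfrak m}(R)}\mathfrak m^{t})$, which is the graded Matlis dual of $\omega_{R}/\mathfrak m^{t}\omega_{R}$ --- a finitely generated module over the Artinian ring $R/\mathfrak m^{t}$ --- and hence of finite length. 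Apart from this step, everything is routine bookkeeping with graded local cohomology together with the single geometric input that Frobenius multiplies degrees by $p$; the real content sits in the existence of test elements and in the passage from an isolated singularity to the $\mathfrak m$-primariness of the parameter test ideal.
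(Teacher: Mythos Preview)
The paper does not give its own proof of this statement; it is quoted as \cite[Theorem 7.12]{HH} and used as a black box. So there is nothing to compare against line by line.

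That said, your argument is correct and is essentially the standard one. You correctly reduce to the homogeneous maximal ideal using that a two-dimensional normal graded domain is Cohen--Macaulay with an isolated singularity, identify $F$-rationality with $0^{*}_{H^{2}_{\mathfrak m}(R)}=0$, and handle the forward direction by the familiar degree count. For the converse, the three ingredients you invoke---Frobenius stability of $0^{*}$, injectivity of $F$ restricted to $0^{*}$, and finite length of $0^{*}$ via annihilation by an $\mathfrak m$-primary parameter test ideal---are exactly the points that carry the proof in the graded setting. Your ``least degree'' contradiction is the clean way to finish: Frobenius pushes a nonzero class in degree $-T$ to degree $-pT<-T$, violating minimality. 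The only place to be slightly more explicit is the passage between the localization $R_{\mathfrak m}$ (used in the paper's definitions) and the graded module $H^{2}_{\mathfrak m}(R)$, but this is routine since $H^{2}_{\mathfrak m}(R)\cong H^{2}_{\mathfrak m R_{\mathfrak m}}(R_{\mathfrak m})$ and the tight closure of zero is compatible with this identification.
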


\subsection{Fundamental cycle}
In this subsection, we introduce the definition and useful properties of the fundamental cycle.

Let $(X,x)$ be a two-dimensional normal  singularity, and let  $f: Y \to X$ be a resolution of singularity.
We denote by $\mathrm{Exc}(f)$ the exceptional set of $f$. 
We call the minimum element of the set 
\[
\left\{
Z\in \mathrm{Div}(Y)\setminus\{0\}  \ 
\middle|  \begin{array}{l} 
\mathrm{Supp}(Z)\subset \mathrm{Exc}(f)\ \
\mbox{and }\  ZE\le 0\\ \mbox{for any prime exceptional }
\mbox{divisor } E \mbox{ of } f 
\end{array}
\right\}.
\]
 the fundamental cycle of $f$.
For an exceptional divisor $D$ on $Y$,
we denote by 
$p_a(D):=\frac{D^2+K_YD}{2}+1$ and call  it the virtual genus of $D$.

\begin{prop}\label{Z^2} 
Let $R$ be a two-dimensional local ring with a rational singularity,  $f:X\to \mathrm{Spec} (R)$  the minimal good resolution and $E_1,\dots,E_r$   the prime exceptional divisors of $f$.
Let $Z = \sum_{i=1}^r  n_{i} E_{i}$ be the fundamental cycle of $f$.
Then 

\[ e(R) = - Z^2 = \sum_{i=1}^r  n_{i} (- E_{i}^2 - 2) +2.\]
\end{prop}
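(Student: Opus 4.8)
The statement packages two equalities, $e(R) = -Z^2$ and $-Z^2 = \sum_i n_i(-E_i^2 - 2) + 2$, and I would establish them separately. The first is Artin's computation of the multiplicity of a rational surface singularity via its fundamental cycle. The second is a purely intersection-theoretic identity on the smooth surface $X$, and it is there that the combinatorics of the resolution enters; it does not use that $R$ is graded.

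For the second equality I would use two consequences of rationality. First, writing $Z = \sum_i n_i E_i$, one has $n_i \ge 1$ for all $i$ (the exceptional set of $f$ is connected, being the fibre over the closed point of a normal local ring, so a component with coefficient $0$ adjacent to one with positive coefficient would force $Z \cdot E_i > 0$, contradicting minimality of $Z$). The exact sequence $0 \to \mathcal O_{Z - E_i}(-E_i) \to \mathcal O_Z \to \mathcal O_{E_i} \to 0$ then gives $h^1(\mathcal O_{E_i}) \le h^1(\mathcal O_Z)$, and $h^1(\mathcal O_Z) = 0$ because $R$ has a rational singularity; hence $p_a(E_i) = h^1(\mathcal O_{E_i}) = 0$. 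By adjunction on the smooth surface $X$, $E_i^2 + K_X E_i = 2 p_a(E_i) - 2 = -2$, i.e. $K_X E_i = -E_i^2 - 2$, so $K_X Z = \sum_i n_i (-E_i^2 - 2)$. Second, rationality also forces $p_a(Z) = 0$ for the fundamental cycle (since $h^0(\mathcal O_Z) = 1$ one has $p_a(Z) = h^1(\mathcal O_Z)$, which vanishes precisely for rational singularities). Thus $0 = p_a(Z) = \tfrac{1}{2}(Z^2 + K_X Z) + 1$, whence $-Z^2 = 2 + K_X Z = 2 + \sum_i n_i(-E_i^2 - 2)$. Together with $e(R) = -Z^2$ this is the claimed formula.

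I do not anticipate a genuine obstacle: the argument only assembles standard facts about rational surface singularities (Artin's multiplicity formula, vanishing of $H^1$ of the fundamental cycle, rationality of each exceptional component) together with adjunction. The one point deserving an explicit word is that the cited form of Artin's formula, and the vanishing $p_a(Z) = 0$, are being applied on the minimal good resolution $f$ rather than on the minimal resolution: since the minimal good resolution dominates the minimal one and, for a rational singularity, the fundamental cycle is compatible with that pullback while self-intersection numbers are pullback-invariant, both inputs transfer unchanged — but this is worth stating rather than glossing over. (Alternatively one could avoid $p_a$ and expand $-Z^2 = -\sum_i n_i(Z \cdot E_i)$ directly, but passing through the virtual genus keeps the role of rationality transparent and the bookkeeping minimal.)
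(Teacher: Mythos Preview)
Your argument is correct; it simply packages the same ingredients differently from the paper. The paper proceeds via a Laufer-style computation sequence $0<Z_1<\cdots<Z_s=Z$ with $Z_i=Z_{i-1}+F_i$ and $Z_{i-1}F_i>0$: from $p_a(Z_i)=p_a(Z_{i-1})+p_a(F_i)+Z_{i-1}F_i-1$ and $p_a(Z)=0$ it deduces $p_a(F_i)=0$ and $Z_{i-1}F_i=1$ at every step, then telescopes $Z_i^2=Z_{i-1}^2+2+F_i^2$ to obtain $-Z^2=\sum_i n_i(-E_i^2-2)+2$ (each $E_j$ occurs $n_j$ times among the $F_i$, and there are $\sum_j n_j$ steps). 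You instead apply adjunction directly: $K_XE_i=-E_i^2-2$ from $p_a(E_i)=0$, hence $K_XZ=\sum_i n_i(-E_i^2-2)$, and then $p_a(Z)=0$ gives $-Z^2=K_XZ+2$. Both routes rest on the same two rationality inputs ($p_a(Z)=0$ and $p_a(E_i)=0$) together with Artin's $e(R)=-Z^2$; the computation-sequence proof has the mild advantage of deriving $p_a(E_i)=0$ along the way rather than as a separate lemma, while your adjunction argument is shorter and makes the role of $K_X$ explicit. Your caveat about the minimal good versus minimal resolution is harmless here, since for a rational surface singularity the two coincide (as the paper remarks immediately after this proposition).
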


\begin{proof}
We can compute $Z$ by a computation sequence of cycles 
\[0 < Z_1< \ldots < Z_s =Z\]
defined by $Z_1 = F_1$ (we can take any prime exceptional divisor of $f$) and $Z_i = Z_{i-1} + F_i$, where $F_i$ is any prime exceptional  divisor $f$ with 
$Z_{i-1} F_i > 0$ (see for example \cite[Proposition 7.2.4]{I}).
 Then we have 
\[ p_a( Z_{i}) = p_a( Z_{i-1} ) + p_a(F_i) + Z_{i-1} F_i - 1\ge p_a( Z_{i-1})\]
\[\mbox{and} \ \  Z_{i}^2 = Z_{i-1}^2 + 2 Z_{i-1} F_i + F_i^2 \]
since    $p_a(F_i)\ge 0$ by \cite[Proposition 7.2.8]{I}.
Since  $p_a(Z)=0$ by \cite[ Proposition 7.3.1]{I}, we have  $p_a(F_i)= 0$ and  $Z_{i-1} F_i =1$ for all $i$. Hence we have 
\[
e(R) = - Z^2 = \sum_{i=1}^r  n_{i} (- E_{i}^2 - 2) +2.
\]
 by \cite[Proposition 7.3.5]{I}.
\end{proof}

\begin{rem}\label{rem dual graph HJ fraction}
\begin{enumerate}
\item
Note that the dual graph of the minimal good resolution of a two-dimensional rational singularity contains no $(-1)$-curves since the minimal resolution of a two-dimensional rational singularity is the minimal good resolution.
Indeed, we have $p_a(F_i)= 0$ and $Z_{i-1} F_i =1$ in the above proof, which implies that 
  all irreducible components of the exceptional set have to be smooth rational curves, pairwise
intersecting transversally in at most one point (see \cite[Proposition 7.2.8.(ii)]{I}).

\item
Let $D=sP_0 - \sum_{i=1}^r \frac{c_i}{d_i} P_i$ be an ample $\mathbb Q$-divisor on $\mathbb P_k^1$, where $s,c_i,d_i\in \mathbb N$ with  $0<c_i<d_i$, and $P_i$ are distinct points of $\mathbb P_k^1$.
Suppose that $R(\mathbb P_k^1,D)$ has a rational singularity.
If we obtain the dual graph of the minimal good resolution of $\mathrm{Spec} (R(\mathbb P_k^1,D))$, we can determine the  Hirzebruch-Jung continued fraction of $\frac{d_i}{c_i}$.
Indeed, since this dual graph contains no $(-1)$-curves, as stated in (1), and Hirzebruch-Jung continued fractions are uniquely determined by natural numbers greater than 1 by Remark \ref{HJ fraction uniqueness}, we can determine the  Hirzebruch-Jung continued fraction of $\frac{d_i}{c_i}$ by Theorem \ref{dual graph of graded ring}.
\end{enumerate}
\end{rem}

Once we have  the coefficient of the central curve of the fundamental cycle of the minimal good resolution of $\mathrm{Spec} (R(\mathbb P_k^1,D))$,
the fundamental cycle  can be computed by the following 
formula.

For a divisor $D=\sum_{i=1}^ra_iE_i$, where $E_i$ is a prime divisor, we denote by $\mathrm{Coeff}_{E_i}D$ the  coefficient $a_i$.
For a real number $a$, we denote by $\left\lceil   a \right\rceil$ the smallest integer greater than or equal to $a$.  

\begin{lem}\label{formula}
Let $D=sP_0 - \sum_{i=1}^r \frac{c_i}{d_i} P_i$ be an ample $\mathbb Q$-divisor on $\mathbb P_k^1$, where $s,c_i,d_i\in\mathbb N$ with  $0<c_i<d_i$, and $P_i$ are distinct points of $\mathbb P_k^1$.
Let $f:X\to \mathrm{Spec} (R(\mathbb P_k^1,D))$ be the minimal good resolution.
Let $F$ be a non-zero effective divisor on $X$ with $\mathrm{Supp}(F)\subset \mathrm{Exc}(f)$ and $n_0$   the coefficient of the central curve
$E_0$ on $F$.
Let $E_{i1}-E_{i2}-\cdots - E_{im_i}$ be the branch of $\mathbb P_k^1$'s corresponding to $P_i$ 
such that 
\[ \frac{d_i}{c_i} = [[ b_{i1}, b_{i2}, \ldots , b_{im_i}]],\]
 $E_{ij}^2 = - b_{ij}$ and $E_0E_{i1}=1$.  Define $e_{i1},\ldots , e_{im_i}\in \mathbb Q$ by 
\[ e_{ij}  = [[ b_{ij}, b_{i,j+1}, \ldots , b_{im_i}]].\]
We assume that  the coefficient  $n_{ij}$ of $E_{ij}$ on $F$ is given inductively,
\[ n_{i1} = \left\lceil   \frac{n_0}{e_{i1}} \right\rceil = \left\lceil   \frac{n_0 c_i}{d_i} \right\rceil, \ldots , n_{i,j+1}= \left\lceil   \frac{n_{ij} }{e_{i,j+1}} \right\rceil,\ldots , n_{im_i}= \left\lceil   \frac{n_{i,m_i-1} }{e_{im_i}} \right\rceil. \] 
Then $F$ is the smallest element of the set 
\[
\left\{
G\in \mathrm{Div}(X)\setminus\{0\}  \ 
\middle|  \begin{array}{l} 
\mathrm{Supp}(G)\subset \mathrm{Exc}(f),\ \mathrm{Coeff}_{E_0}G=n_0\\
\mbox{and } GE\le 0 \mbox{ for any prime exceptional}\\
\mbox{divisor } E \mbox{ of } f \mbox{ with } E\not= E_0
\end{array}
\right\}.
\]
Moreover if $n_0$ is equal to the coefficient of the central curve of  the fundamental cycle of $f$, then $F$ is the the fundamental cycle.
\end{lem}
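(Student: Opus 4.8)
The plan is to prove the formula by a direct two–sided comparison, bypassing the computation–sequence machinery: show that $F$ satisfies all the inequalities defining the set, and that $F$ lies below every other element of the set, so that it is the minimum. The only inputs are the continued–fraction identities $e_{ij}=b_{ij}-1/e_{i,j+1}$ for $j<m_i$ and $e_{im_i}=b_{im_i}$ (Lemma~\ref{[[a,b]]=[[a,[[b]]]]}), the bound $e_{ij}>1>0$ (Lemma~\ref{[[a]]>1}, valid since all $b_{ij}\ge 2$), and the intersection data of Theorem~\ref{dual graph of graded ring}, by which a cycle $G=n_0E_0+\sum_{i,j}g_{ij}E_{ij}$ satisfies $GE_{ij}\le 0$ precisely when $g_{i,j-1}+g_{i,j+1}\le b_{ij}g_{ij}$, with the conventions $g_{i0}:=n_0$ and $g_{i,m_i+1}:=0$ (and likewise for $F$, whose coefficients I write $n_{ij}$, with $n_{i0}:=n_0$).

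For minimality I would take any admissible $G$ and first show, by descending induction on $j$, that $g_{ij}\ge g_{i,j-1}/e_{ij}$ for all $j$: at $j=m_i$ the inequality $GE_{im_i}\le 0$ is exactly $g_{i,m_i-1}\le b_{im_i}g_{im_i}=e_{im_i}g_{im_i}$, and for $j<m_i$, substituting the hypothesis $g_{i,j+1}\ge g_{ij}/e_{i,j+1}$ into $GE_{ij}\le 0$ gives $g_{i,j-1}\le b_{ij}g_{ij}-g_{i,j+1}\le(b_{ij}-1/e_{i,j+1})g_{ij}=e_{ij}g_{ij}$. An ascending induction on $j$ then yields $g_{ij}\ge n_{ij}$: the case $j=1$ is $g_{i1}\ge n_0/e_{i1}$ with $g_{i1}\in\mathbb Z$, hence $g_{i1}\ge\lceil n_0/e_{i1}\rceil=n_{i1}$; in general $g_{ij}\ge g_{i,j-1}/e_{ij}\ge n_{i,j-1}/e_{ij}$ together with $g_{ij}\in\mathbb Z$ gives $g_{ij}\ge\lceil n_{i,j-1}/e_{ij}\rceil=n_{ij}$. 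Thus $F\le G$ (and, in passing, every admissible $G$ is automatically effective, so no generality is lost in the statement).

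For admissibility of $F$ itself I would check $FE_{ij}\le 0$ for all $i,j$. At the tip, $n_{im_i}=\lceil n_{i,m_i-1}/b_{im_i}\rceil\ge n_{i,m_i-1}/b_{im_i}$ gives $n_{i,m_i-1}-b_{im_i}n_{im_i}\le 0$. For $1\le j<m_i$ one needs $n_{i,j+1}\le b_{ij}n_{ij}-n_{i,j-1}$; here $n_{ij}\ge n_{i,j-1}/e_{ij}$ gives $n_{i,j-1}\le e_{ij}n_{ij}=b_{ij}n_{ij}-n_{ij}/e_{i,j+1}$, so $b_{ij}n_{ij}-n_{i,j-1}\ge n_{ij}/e_{i,j+1}$, and since the left-hand side is an \emph{integer} while $\lceil x\rceil$ is the least integer $\ge x$, this forces $b_{ij}n_{ij}-n_{i,j-1}\ge\lceil n_{ij}/e_{i,j+1}\rceil=n_{i,j+1}$, as wanted. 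Combined with the previous step, $F$ is the minimum of the set. For the last sentence, if $n_0$ is the $E_0$-coefficient of the fundamental cycle $Z$, then $Z$ is admissible (it even satisfies $ZE_0\le 0$), so $F\le Z$; then $F-Z\le 0$ has zero $E_0$-coefficient, hence is a non-positive combination of the $E_{ij}$, so $(F-Z)E_0=\sum_i(\mathrm{Coeff}_{E_{i1}}F-\mathrm{Coeff}_{E_{i1}}Z)\le 0$, giving $FE_0\le ZE_0\le 0$; with $FE_{ij}\le 0$ this puts $F$ in the defining set of the fundamental cycle, so $Z\le F$ and $F=Z$.

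The one genuinely delicate point is the admissibility step for $F$: naive estimates only give $FE_{ij}\le(\text{something}\ge 0)$ and chase their own tail (splitting $FE_{ij}=(n_{i,j-1}-e_{ij}n_{ij})+(n_{i,j+1}-n_{ij}/e_{i,j+1})$ produces a non-positive term plus a non-negative one), and the argument really needs to keep $b_{ij}n_{ij}-n_{i,j-1}$ together, exploiting its integrality together with $e_{ij}=b_{ij}-1/e_{i,j+1}$ to trap the ceiling $n_{i,j+1}$ from above. Everything else — the two nested inductions and the fundamental–cycle comparison — is bookkeeping with the data of Theorem~\ref{dual graph of graded ring}.
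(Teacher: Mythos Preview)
Your proof is correct and complete. The paper, by contrast, does not give a direct argument: its entire proof is the sentence ``The statement follows from \cite[Lemma~1.1]{KN}.'' So you have taken a genuinely different route by supplying a self-contained proof from the intersection data of Theorem~\ref{dual graph of graded ring} and the continued-fraction identity $e_{ij}=b_{ij}-1/e_{i,j+1}$. The two nested inductions for minimality are standard, but the point you flag as delicate --- showing $FE_{ij}\le 0$ via the integrality of $b_{ij}n_{ij}-n_{i,j-1}$ to trap $n_{i,j+1}=\lceil n_{ij}/e_{i,j+1}\rceil$ from above --- is indeed the crux, and your handling of it is clean. Your fundamental-cycle comparison via $(F-Z)E_0\le 0$ is also correct and avoids any appeal to computation sequences. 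What the citation buys the paper is brevity; what your argument buys is transparency and independence from \cite{KN}, at the cost of a page of elementary bookkeeping.
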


\begin{proof}
The statement follows from \cite[Lemma 1.1]{KN}.
\end{proof}


\begin{cor}\label{n_0=min} 
Let $D=sP_0 - \sum_{i=1}^r \frac{c_i}{d_i} P_i$ be an ample $\mathbb Q$-divisor on $\mathbb P_k^1$, where $s,c_i,d_i\in\mathbb N$ with  $0<c_i<d_i$, and $P_i$ are distinct points of $\mathbb P_k^1$.
Let $Z$ be the fundamental cycle of the minimal good resolution of  $\mathrm{Spec} (R(\mathbb P_k^1,D))$, and let $E_0$ be the central curve of the minimal good resolution. Then
\[\mathrm{Coeff}_{E_0}Z=\min \{ n \in\mathbb N \;|\; \mathrm{deg} [ n D] \ge 0 \}. \]
In particular, if $s+1\le r$, then 
\[\mathrm{Coeff}_{E_0}(Z)\ge 2.\] 
\end{cor}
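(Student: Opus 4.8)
The plan is to read the fundamental cycle off Lemma \ref{formula} and reduce the statement to one intersection-number computation on the star-shaped dual graph of Theorem \ref{dual graph of graded ring}. Fix a positive integer $n_0$ and let $F_{n_0}$ be the effective divisor produced by Lemma \ref{formula} with $\mathrm{Coeff}_{E_0}F_{n_0}=n_0$; it is supported on $\mathrm{Exc}(f)$, satisfies $F_{n_0}E\le 0$ for every prime exceptional divisor $E\neq E_0$, and has coefficient $\lceil n_0 c_i/d_i\rceil$ along $E_{i1}$. First I would compute $F_{n_0}E_0$: since $E_0^2=-s$ and, among the exceptional curves, $E_0$ meets only the $E_{i1}$ (each with $E_0E_{i1}=1$), we obtain $F_{n_0}E_0=-sn_0+\sum_{i=1}^r\lceil n_0 c_i/d_i\rceil$. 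As $\lfloor -x\rfloor=-\lceil x\rceil$, we have $[n_0 D]=n_0 sP_0-\sum_{i=1}^r\lceil n_0 c_i/d_i\rceil P_i$, so $F_{n_0}E_0=-\mathrm{deg}[n_0 D]$; hence $F_{n_0}E_0\le 0$ if and only if $\mathrm{deg}[n_0 D]\ge 0$.

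Next I would use the minimality of $Z$. If $\mathrm{deg}[n_0 D]\ge 0$ then $F_{n_0}E\le 0$ for \emph{every} prime exceptional divisor $E$, so $F_{n_0}$ belongs to the set defining the fundamental cycle, whence $F_{n_0}\ge Z$ and in particular $n_0=\mathrm{Coeff}_{E_0}F_{n_0}\ge\mathrm{Coeff}_{E_0}Z$. Conversely, for $n_0:=\mathrm{Coeff}_{E_0}Z$ the last sentence of Lemma \ref{formula} gives $F_{n_0}=Z$, so $F_{n_0}E_0=ZE_0\le 0$ and thus $\mathrm{deg}[n_0 D]\ge 0$. Combining, $\mathrm{Coeff}_{E_0}Z=\min\{n\in\mathbb N\mid\mathrm{deg}[nD]\ge 0\}$, the set being nonempty because $D$ is ample. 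For the last assertion, $0<c_i<d_i$ forces $[D]=sP_0-\sum_{i=1}^r P_i$, so $\mathrm{deg}[D]=s-r\le -1<0$ whenever $s+1\le r$; thus $n=1$ is not in this set and its minimum is at least $2$.

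I do not expect a real obstacle here: the substance is already in Lemma \ref{formula} and Theorem \ref{dual graph of graded ring}. The only points needing care are the sign/ceiling bookkeeping that turns $F_{n_0}E_0$ into $-\mathrm{deg}[n_0 D]$, and keeping straight the direction of the minimality comparison between $F_{n_0}$ and $Z$.
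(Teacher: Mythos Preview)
Your proof is correct and follows essentially the same route as the paper: compute $F_{n_0}E_0=-\mathrm{deg}[n_0D]$ via Lemma~\ref{formula} and the intersection data of Theorem~\ref{dual graph of graded ring}, then use the minimality of $Z$ together with the identification $F_{\mathrm{Coeff}_{E_0}Z}=Z$ from Lemma~\ref{formula}. If anything, you spell out the two-sided minimality argument a bit more explicitly than the paper does, but the approach is the same.
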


\begin{proof} 
Let $f:X\to \mathrm{Spec} (R(\mathbb P_k^1,D))$ be the minimal good resolution.
For $l\in\mathbb N$, let  $F_l$ be the smallest element of the set 
\[
\left\{
G\in \mathrm{Div}(X)\setminus\{0\}  \ 
\middle|  \begin{array}{l} 
\mathrm{Supp}(G)\subset \mathrm{Exc}(f),\ \mathrm{Coeff}_{E_0}G=l\\
\mbox{and } GE\le 0 \mbox{ for any prime exceptional}\\
\mbox{divisor } E \mbox{ of } f \mbox{ with } E\not= E_0
\end{array}
\right\}.
\]
Then we  have 
\[
F_l E_0 =-ls+\sum_{i=1}^r \left\lceil   \frac{l c_i}{d_i} \right\rceil =- \mathrm{deg} [lD]
\]
 for $l\in \mathbb N$ by Lemma \ref{formula}.
Let $n_0$ be the coefficient of $E_0$ in $Z$.
Then  $F_{n_0}=Z$ by Lemma \ref{formula}.
Therefore \[n_0 = \min \{ n \in\mathbb N  \;|\; \mathrm{deg} [ n D] \ge 0 \}. \]

If $s+1\le r$, then $\mathrm{deg}[D]\le -1$.
Therefore we have  $\mathrm{Coeff}_{E_0}(Z)\ge 2$.
\end{proof}

\section{$F$-rationality of $R(\mathbb P_k^1,D)$}

 In this section, we  investigate $F$-rationality of a two-dimensional graded ring with a rational singularity in terms of Pinkham-Demazure construction and give  an affirmative answer to Question \ref{Watanabe's question}. 

The following criterion for $F$-rationality is given in \cite{HW}.

\begin{thm}[{\cite[Theorem 2.9]{HW}}] \label{criterion}
Let $D$ be an ample $\mathbb Q$-divisor on $\mathbb P_k^1$ and $R = R(\mathbb P_k^1, D)$.
Let $B_n = -p [-nD] + [-pnD]$ for a positive integer $n$, and let $(B_n)_{\mathrm{red}}$ be the reduced divisor with the same support as $B_n$.
Assume that $R$ has a rational singularity. 
Then $R$ is $F$-rational if and only if for every positive integer $n$, we have 
\[ \mathrm{deg} [ -pnD ] + \mathrm{deg} (B_n)_{\mathrm{red}} \le  1.\]   
\end{thm}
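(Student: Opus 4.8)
The plan is to deduce the criterion of Theorem~\ref{criterion} from the Hochster--Huneke characterization (Theorem~\ref{HH F-rational}): since $R$ already has a rational singularity, $a(R)<0$ by Theorem~\ref{graded rational singularity}, so the only thing to control is $F$-injectivity, i.e.\ the injectivity of the Frobenius action $H^2_{\mathfrak m}(F)$ on $H^2_{\mathfrak m}(R)$. First I would compute the graded pieces of $H^2_{\mathfrak m}(R)$ using the standard Pinkham--Demazure description of local cohomology: for $R=R(\mathbb P^1_k,D)$ one has $[H^2_{\mathfrak m}(R)]_n \cong H^1(\mathbb P^1_k,\mathcal O_{\mathbb P^1_k}([nD]))$, whose dimension by Serre duality is $\deg[-nD]+1$ when this is nonnegative (i.e.\ when $\deg[-nD]\ge -1$) and $0$ otherwise. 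Since $a(R)<0$, the module $H^2_{\mathfrak m}(R)$ is concentrated in nonpositive degrees, and in degree $0$ it is one-dimensional (the socle giving the $a$-invariant obstruction lives just below).

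Next I would describe the Frobenius map degree by degree. Frobenius sends $[H^2_{\mathfrak m}(R)]_n$ to $[H^2_{\mathfrak m}(R)]_{pn}$, and under the identification with $H^1(\mathbb P^1_k,\mathcal O([nD]))\to H^1(\mathbb P^1_k,\mathcal O([pnD]))$ it is the map induced by the inclusion of sheaves $\mathcal O([nD])\hookrightarrow \mathcal O(p[nD])\hookrightarrow \mathcal O([pnD])$ composed with the $p$-th power. The cokernel of $\mathcal O(p[nD])\hookrightarrow \mathcal O([pnD])$ is supported on the divisor $B_n=-p[-nD]+[-pnD]=[pnD]-p[nD]$ (using $[-E]=-E$ for integral $E$ and rewriting), so the relevant piece of the cokernel of Frobenius is governed by $H^0$ of a skyscraper supported on $\mathrm{Supp}(B_n)$. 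Passing to the long exact cohomology sequence, injectivity of $H^2_{\mathfrak m}(F)$ in degree $n$ becomes a statement about the map $H^0(\mathcal O_{B_n}\text{-twist})\to H^1(\mathbb P^1_k,\mathcal O([nD]))$, which fails exactly when $H^1(\mathbb P^1_k,\mathcal O([pnD]))$ is ``too large'' relative to what $H^1(\mathbb P^1_k,\mathcal O(p[nD]))$ can fill. Chasing dimensions, the kernel of Frobenius in degree $n$ is nonzero precisely when $\deg[-pnD]+1 > \deg[-p[nD]]+1 - \deg(B_n)_{\mathrm{red}}$, i.e.\ when $\deg[-pnD] + \deg(B_n)_{\mathrm{red}} \ge 2$; equivalently $F$ is injective in all degrees iff $\deg[-pnD]+\deg(B_n)_{\mathrm{red}}\le 1$ for every $n$. (One must also check the degrees $n$ where the target vanishes contribute nothing, and that it suffices to test $n$ with $\deg[-nD]\ge -1$ since $H^2_{\mathfrak m}(R)$ vanishes otherwise.)

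The technical heart of the argument, and the step I expect to be the main obstacle, is making precise the identification of the Frobenius action on $H^2_{\mathfrak m}(R)$ with the sheaf-level map and correctly bookkeeping the rounding operators $[\,\cdot\,]$ through the factorization $\mathcal O([nD])\subset \mathcal O(p[nD])\subset\mathcal O([pnD])$. In particular one needs that $p[nD]\le [pnD]$ (clear, since $p\,[a]\le [pa]$ coefficientwise) and that the quotient sheaf has length $\deg(B_n)=\deg[pnD]-p\deg[nD]$ concentrated on $(B_n)_{\mathrm{red}}$, together with the fact that a length-$\ell$ skyscraper on a reduced support of size $d$ maps to $H^1$ with image of dimension at most $\ell$ but the \emph{kernel} contribution to $H^0$ is what matters. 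Once the long exact sequence is written out and Serre duality applied, the stated inequality should drop out by a direct dimension count; I would organize it so that the ``only if'' direction produces, for a violating $n$, an explicit nonzero Frobenius-torsion element in $[H^2_{\mathfrak m}(R)]_n$, and the ``if'' direction shows the connecting map is forced to be injective on the relevant subquotient. Throughout I would lean on Theorem~\ref{graded rational singularity} to keep all the degree estimates on the correct side of zero.
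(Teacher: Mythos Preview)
The paper does not prove Theorem~\ref{criterion}; it is quoted verbatim from \cite[Theorem~2.9]{HW} and used as a black box throughout Sections~3--6. So there is no proof in the paper to compare your proposal against.

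That said, your outline is essentially the strategy of the original Hara--Watanabe argument: reduce via Theorem~\ref{HH F-rational} to $F$-injectivity, identify $[H^2_{\mathfrak m}(R)]_n$ with $H^1(\mathbb P^1_k,\mathcal O([nD]))$ through the Pinkham--Demazure description, and analyze the graded Frobenius $[H^2_{\mathfrak m}(R)]_n\to[H^2_{\mathfrak m}(R)]_{pn}$ sheaf-theoretically. One slip to flag: your identity $B_n=[pnD]-p[nD]$ is not correct in general. With $D=\sum a_iP_i$ one has $\mathrm{Coeff}_{P_i}B_n=-p[-na_i]+[-pna_i]=p\lceil na_i\rceil-\lceil pna_i\rceil$, which differs from $[pna_i]-p[na_i]$ whenever $na_i\notin\mathbb Z$ (e.g.\ $p=2$, $na_i=1/3$). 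The correct bookkeeping is through the factorization $\mathcal O(p[nD])\hookrightarrow\mathcal O(\lceil pnD\rceil)$ on the Serre-dual side, and it is the reduced support of $B_n$ (not its length) that enters because the relevant map factors through the Cartier/Frobenius trace, which kills higher-order infinitesimal contributions at each point. Your dimension count in the penultimate paragraph is the right shape, but it will only come out cleanly once this distinction between $B_n$ and $(B_n)_{\mathrm{red}}$ is handled via the Frobenius trace rather than a naive length argument.
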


\begin{rem}\label{rem not in Z}
Let $D = \sum_{i=1}^r a_i P_i$ with $a_i\in \mathbb Q$.
Then 
\[ \mathrm{deg} (B_n)_{\mathrm{red}}  \le \sharp \{ i\;| \; na_i \not\in \mathbb Z\}.\] 
In general, $\mathrm{deg} (B_n)_{\mathrm{red}}  \neq \sharp \{ i\;| \; na_i \not\in \mathbb Z\}.$
For example, if $p=2$ and $D=\frac{2}{3}P$, then $\mathrm{deg} (B_1)_{\mathrm{red}}=0$ and $\sharp \{ i\;| \; a_i \not\in \mathbb Z\}=1$.
\end{rem}

\begin{prop}\label{f-rational deg 1}
Let $D=sP_0 - \sum_{i=1}^r \frac{c_i}{d_i} P_i$ be an ample $\mathbb Q$-divisor on $\mathbb P_k^1$, where $s,c_i,d_i\in\mathbb N$ with  $0<c_i<d_i$, and $P_i$ are distinct points of $\mathbb P_k^1$.
\begin{enumerate}
\item  If $s\ge r$, then $R(\mathbb P_k^1,D)$ is $F$-rational.
\item  If $R(\mathbb P_k^1,D)$ has a rational singularity and is not $F$-rational, then $s+1=r$. 
\item  If $R(\mathbb P_k^1,D)$ has a rational singularity, $\mathrm{deg} D \ge 1$ and   $p\ge r-1$, then  $R(\mathbb P_k^1,D)$ is $F$-rational. 
\end{enumerate}
\end{prop}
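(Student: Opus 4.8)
The plan is to deduce all three parts from the criterion of Theorem \ref{criterion}, reducing each to an estimate on $\deg[-pnD]$ and $\deg(B_n)_{\mathrm{red}}$. For part (1), when $s\ge r$ we compute $\deg[-nD]=-ns+\sum_i\lceil nc_i/d_i\rceil$; since $s\ge r$ and $\lceil nc_i/d_i\rceil\le nc_i/d_i+1-1/d_i< n$ (using $0<c_i<d_i$), we get $\deg[-nD]<0$, indeed $\deg[-nD]\le -n(s-r)\le 0$ with the strict inequality coming from the fractional parts. I would first record the sharper bound $\deg[-nD]\le -1$ for all $n\ge 1$ when $s\ge r$ (separating the cases $s>r$ and $s=r$, the latter needing that not all $nc_i/d_i$ are integers since $D$ is ample hence $\deg D=s-\sum c_i/d_i>0$ forces... actually here one uses $c_i<d_i$ directly). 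Then $\deg[-pnD]\le -1$, and since each point contributes at most $1$ to $\deg(B_n)_{\mathrm{red}}$ only when it is actually in the support, a crude bound $\deg(B_n)_{\mathrm{red}}\le r$ is too weak; instead I would use Remark \ref{rem not in Z} together with the observation that whenever $p\nmid d_i$ fails to help, one still has $\deg[-pnD]+\deg(B_n)_{\mathrm{red}}\le \deg[-pnD]+r$, and combine with a matching lower-order estimate on $\deg[-pnD]$. The cleanest route for (1): show directly that $R(\mathbb P^1_k,D)\cong$ a polynomial-like ring or apply that $s\ge r$ forces the central curve coefficient of the fundamental cycle to be $1$ (Corollary \ref{n_0=min}, since $\deg[1\cdot D]=s-\sum\lceil\cdots\rceil\ge 0$ is not quite automatic, but $\deg D>0$ and one checks $\deg[D]\ge 0$ when $s\ge r$), and a rational singularity whose fundamental cycle has central coefficient $1$ is well known to be $F$-rational — but to stay self-contained I would instead just push the Theorem \ref{criterion} inequality through.

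For part (2): assume $R$ has a rational singularity but is not $F$-rational. By Theorem \ref{graded rational singularity}(2), $C=\mathbb P^1_k$ and $\deg[nD]\ge -1$ for all $n\ge 1$, so $\deg[D]\ge -1$, i.e. $s-r\ge s-\sum\lceil c_i/d_i\rceil = s - r \ge -1$ (each $\lceil c_i/d_i\rceil =1$ since $0<c_i<d_i$), giving $s\ge r-1$. Combined with part (1), which says $s\ge r$ implies $F$-rational, non-$F$-rationality forces $s< r$, hence $s+1=r$. This part is short once (1) is in hand.

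For part (3): now $s+1=r$ by part (2) (we may assume $R$ is not already $F$-rational), $\deg D\ge 1$, and $p\ge r-1=s$. I apply Theorem \ref{criterion}: I need $\deg[-pnD]+\deg(B_n)_{\mathrm{red}}\le 1$ for every $n\ge 1$. Write $\deg[-pnD]=-pns+\sum_{i=1}^r\lceil pnc_i/d_i\rceil$. Using $\lceil x\rceil\le x+1-1/d_i$ when $d_i\nmid$ the numerator and $=x$ otherwise, and splitting the sum according to whether $pn c_i/d_i\in\mathbb Z$, I get $\deg[-pnD]\le -pns+pn\sum c_i/d_i + \#\{i: d_i\nmid pnc_i\}\cdot(1) - (\text{small positive terms})$. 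Since $\deg D = s-\sum c_i/d_i\ge 1$, we have $pn\sum c_i/d_i\le pn(s-1)$, so $\deg[-pnD]\le -pn + \#\{i: pnc_i/d_i\notin\mathbb Z\}$. By Remark \ref{rem not in Z}, $\deg(B_n)_{\mathrm{red}}\le \#\{i: pnc_i/d_i\notin\mathbb Z\}\le r$. Hence $\deg[-pnD]+\deg(B_n)_{\mathrm{red}}\le -pn+2\#\{\cdots\}\le -pn+2r$. Since $p\ge r-1$ and $r=s+1\ge 2$, for $n\ge 2$ this is $\le -2(r-1)+2r=2$, which is not quite $\le 1$ — so I will need to be more careful, sharpening either the count or the use of $\deg D\ge 1$ to gain one more unit, e.g. by noting $\deg[-pnD]\le -1$ already from the rational singularity hypothesis applied in the form $\deg[mD]\ge -1 \Leftrightarrow \deg[-mD]\le$ something, or by using that at least one $d_i\ge 2$ contributes a genuine $1-1/d_i\le 1/2$ saving. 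The main obstacle is exactly this tightening in part (3): getting from the easy bound $\le 2r - pn$ down to $\le 1$ for the small values of $n$ (especially $n=1$), which is where the hypothesis $\deg D\ge 1$ (as opposed to just $\deg D>0$) must be used decisively, presumably by showing $\deg[-pD]\le 1-r$ so that adding $\deg(B_1)_{\mathrm{red}}\le r$ lands at $\le 1$.
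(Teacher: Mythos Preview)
Your overall strategy via Theorem \ref{criterion} is the paper's strategy, and your argument for part (2) is correct and matches the paper. The trouble is a floor/ceiling error in the formula for $\deg[-pnD]$ that derails both (1) and (3).

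In part (3) you write $\deg[-pnD]=-pns+\sum_{i=1}^r\lceil pnc_i/d_i\rceil$. But the coefficient of $P_i$ in $-pnD$ is $+pnc_i/d_i$, so the round-down is a \emph{floor}:
\[
\deg[-pnD]=-pns+\sum_{i=1}^r\left[\frac{pnc_i}{d_i}\right]\le -pns+\sum_{i=1}^r\frac{pnc_i}{d_i}=-pn\deg D\le -pn.
\]
With the crude bound $\deg(B_n)_{\mathrm{red}}\le r$ and the hypothesis $p\ge r-1$ this gives $\deg[-pnD]+\deg(B_n)_{\mathrm{red}}\le -pn+r\le -(r-1)+r=1$, and Theorem \ref{criterion} finishes. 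Your spurious $+\#\{i:\cdots\}$ term came from using ceilings instead of floors, and that is exactly the extra $r$ you could not eliminate. This is the paper's proof of (3), verbatim.

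The same correction fixes part (1), and in particular shows that the bound $\deg(B_n)_{\mathrm{red}}\le r$ you dismissed as ``too weak'' is precisely what is needed. With floors, $[pnc_i/d_i]\le pn-1$ (since $c_i<d_i$), so
\[
\deg[-pnD]\le -pns+r(pn-1)=-(pn-1)(s-r)-s\le -s
\]
when $s\ge r$; adding $\deg(B_n)_{\mathrm{red}}\le r\le s$ gives $\le 0\le 1$. Since also $\deg[nD]=ns-\sum\lceil nc_i/d_i\rceil\ge ns-nr\ge 0$, the ring has a rational singularity by Theorem \ref{graded rational singularity}, and Theorem \ref{criterion} applies. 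No appeal to fundamental cycles or sharper counting is needed.
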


\begin{proof}
Let $B_n = -p [-nD] + [-pnD]$ for a positive integer $n$.\\
$(1)$ Since $\mathrm{deg} [ nD ] \ge 0$ for any positive integer $n$, $R(\mathbb P_k^1,D)$ has a rational singularity by Theorem \ref{graded rational singularity}.
Note that $\mathrm{deg} [ -pnD ] \le -s$ and $\mathrm{deg} (B_n)_{\mathrm{red}}\le r$ for any positive integer $n$.
Therefore $R(\mathbb P_k^1,D)$ is $F$-rational by Theorem \ref{criterion}.\\
$(2)$ This statement follows from Lemma \ref{non-rational s<r-1} and Proposition \ref{f-rational deg 1}.(1).\\
$(3)$ Since $\mathrm{deg} [ -pnD ] \le \mathrm{deg} ( -pnD )\le -pn$ and $\mathrm{deg} (B_n)_{\mathrm{red}}\le r$ for any positive integer $n$, $R(\mathbb P_k^1,D)$ is $F$-rational by Theorem \ref{criterion}.
\end{proof}

\begin{ex}  Let $D= 2P_0 - \frac{1}{3} (P_1+P_2+P_3)$, where $P_i$ are distinct points of $\mathbb P_k^1$.
Then $R(\mathbb P_k^1,D)$ is $F$-rational for all  $p$.
Indeed, since $\mathrm{deg} [ nD ] \ge -1$ for any positive integer $n$, $R(\mathbb P_k^1,D)$ has a rational singularity by Theorem \ref{graded rational singularity}.
Therefore $R(\mathbb P_k^1,D)$ is $F$-rational by Proposition \ref{f-rational deg 1}.(3).
\end{ex}

Watanabe asked the following question.
\begin{quest}\label{Watanabe's question 1}
Let $D=\sum_{i=1}^r \frac{c_i}{d_i} P_i$ be an ample $\mathbb Q$-divisor on $\mathbb P_k^1$, where $c_i\in\mathbb Z$,  $d_i\in \mathbb N$  and $P_i$ are distinct points of $\mathbb P_k^1$.
Let $R=R(\mathbb P_k^1,D)$.
Assume that $R$ has a rational singularity and $d_i>p$ for all $i$.
Then  is $R$  $F$-rational?
\end{quest}

\begin{thm}\label{$F$-rationality when p not | d_i}
Let $D=\sum_{i=1}^r \frac{c_i}{d_i} P_i$ be an ample $\mathbb Q$-divisor on $\mathbb P_k^1$, where $c_i\in\mathbb Z$,  $d_i\in \mathbb N$  and $P_i$ are distinct points of $\mathbb P_k^1$.
Let $R=R(\mathbb P_k^1,D)$.
Assume that $R$ has a rational singularity and $p$ does not divide any $d_i$.
Then   $R$ is $F$-rational.
In particular, Question \ref{Watanabe's question 1} is affirmative.
\end{thm}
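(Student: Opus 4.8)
The plan is to verify the numerical criterion of Theorem~\ref{criterion} directly. For a positive integer $m$ put
\[
t_m := \sharp\{\, i \mid 1\le i\le r,\ d_i\nmid mc_i \,\}=\sharp\{\, i \mid mc_i/d_i\notin\mathbb Z \,\}.
\]
The first, purely elementary, step is the identity
\[
\mathrm{deg}[-mD]+\mathrm{deg}[mD]=-t_m \qquad(\,m\in\mathbb Z_{>0}\,),
\]
which follows at once from $[mD]=\sum_{i=1}^r[mc_i/d_i]P_i$, the analogous expression for $-mD$, and the fact that $[x]+[-x]$ equals $0$ if $x\in\mathbb Z$ and equals $-1$ otherwise, applied with $x=mc_i/d_i$.

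Next I would bring in the hypotheses. Since $R$ has a rational singularity, Theorem~\ref{graded rational singularity} gives $\mathrm{deg}[nD]\ge -1$ for every positive integer $n$. Applying this with $n$ replaced by $pn$ and using the identity above,
\[
\mathrm{deg}[-pnD]=-t_{pn}-\mathrm{deg}[pnD]\le 1-t_{pn}.
\]
Here the assumption that $p$ divides no $d_i$ enters, and only here: as $p$ is prime, $\gcd(p,d_i)=1$ for every $i$, so $d_i\mid pnc_i$ if and only if $d_i\mid nc_i$, whence $t_{pn}=t_n$. Therefore $\mathrm{deg}[-pnD]\le 1-t_n$ for every positive integer $n$. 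On the other hand, Remark~\ref{rem not in Z}, applied to $D=\sum_{i=1}^r(c_i/d_i)P_i$, gives
\[
\mathrm{deg}(B_n)_{\mathrm{red}}\le \sharp\{\, i \mid nc_i/d_i\notin\mathbb Z \,\}=t_n .
\]
Adding the last two inequalities yields $\mathrm{deg}[-pnD]+\mathrm{deg}(B_n)_{\mathrm{red}}\le 1$ for every positive integer $n$, which is precisely the condition of Theorem~\ref{criterion}. Hence $R$ is $F$-rational, and in particular Question~\ref{Watanabe's question 1} has an affirmative answer.

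I do not anticipate any real obstacle: the argument never examines $B_n$ beyond the crude bound of Remark~\ref{rem not in Z}, so there is essentially nothing to compute. The single point that needs care is the bookkeeping with $t_n$, and this is also where the characteristic hypothesis is indispensable: one and the same quantity $t_n$ must bound $\mathrm{deg}(B_n)_{\mathrm{red}}$ from above and, through the rational singularity condition invoked at the \emph{multiple} $pn$, bound $\mathrm{deg}[-pnD]$ from above by $1-t_n$; these two estimates fit together with no slack precisely because $t_{pn}=t_n$, which is exactly what $p\nmid d_i$ guarantees. (Alternatively one could first normalize $D$ to the Pinkham--Demazure form $sP_0-\sum(c_i/d_i)P_i$ and reduce to the case $s+1=r$ via Proposition~\ref{f-rational deg 1}, but the direct estimate above makes this detour unnecessary.)
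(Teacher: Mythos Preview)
Your proof is correct and follows essentially the same approach as the paper: both arguments use the identity $\deg[-mD]+\deg[mD]=-t_m$, invoke the rational singularity condition at the multiple $pn$, use $p\nmid d_i$ to get $t_{pn}=t_n$, and bound $\deg(B_n)_{\mathrm{red}}$ by $t_n$ via Remark~\ref{rem not in Z}. The only cosmetic difference is that you argue directly while the paper phrases it as a proof by contradiction.
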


\begin{proof}
We assume that $R$ is not $F$-rational.
Then there exists a positive integer $m$ such that 
\[
\mathrm{deg}[-pmD]+\mathrm{deg}(B_m)_{\mathrm{red}}\ge 2,
\]
where $B_m = -p [-mD] + [-pmD]$ by Theorem \ref{criterion}.
Since $R$ has a rational singularity,  we have for  every positive integer $n$,
$$\mathrm{deg}[nD]\ge -1$$
by Theorem \ref{graded rational singularity}.
Let $l=\sharp\{i\in \mathbb N \mid  \frac{mc_i}{d_i}\not\in \mathbb Z \}$.
Then we have 
 $$\left\lceil  \frac{pmc_j}{d_j}\right\rceil-\left[ \frac{pmc_j}{d_j}\right]=1$$ for $j\in \{i\in \mathbb N\ |\ \frac{mc_i}{d_i}\not\in \mathbb N \}$
and $\mathrm{deg}(B_m)_{\mathrm{red}}\le l$ (see Remark \ref{rem not in Z}).
We have
\begin{align*}
\mathrm{deg}[-pmD]&= \sum_{i=1}^r\left[ -\frac{pmc_i}{d_i}\right]
= -\sum_{i=1}^r\left\lceil  \frac{pmc_i}{d_i}\right\rceil
=-l-\sum_{i=1}^r\left[ \frac{pmc_i}{d_i}\right]\\
&=-l-\mathrm{deg}[pmD]\le -l+1.
\end{align*}
Hence we have 
\[
2\le \mathrm{deg}[-pmD]+\mathrm{deg}(B_m)_{\mathrm{red}}\le -l+1+l=1,\]
 which is a contradiction.
Therefore $R$ is  $F$-rational.

\end{proof}

\section{$F$-rationality of two-dimensional graded rings  with a rational singularity}

In this section we show that for a positive integer $m$, any two-dimensional graded ring with multiplicity $m$ and a rational singularity is $F$-rational if the characteristic of the base field is sufficiently large depending on $m$.

\begin{defn}\label{T frac}
Let $a$ be a rational number with $a>1$. 
Let   $a = [[ b_{1}, \ldots , b_{m}]]$ be the Hirzebruch-Jung continued fraction of  $a$ and $n=\sharp\{ i\, |\, b_i= 2\}$. 
Let $\{j_1,j_2,\dots,j_{m-n}\}$ be the set of numbers such that $b_{j_l}\neq 2$ and $j_l<j_{l+1}$.
We define 
\[T(a)=
\begin{cases}(b_{j_1},b_{j_2},\dots,b_{j_{m-n}})\in \mathbb N^{m-n} & \text{if}\ m \not = n\\
T(a)=\emptyset & \text{if}\ m  = n.
\end{cases}
\]
\end{defn}

For an ample $\mathbb Q$-divisor $D=sP_0-\sum_{i=1}^r \frac{c_i}{d_i} P_i$  on $\mathbb P_k^1$, Theorem \ref{dual graph of graded ring} implies that the Hirzebruch-Jung continued fraction of  $\frac{d_i}{c_i}$ is determined by the exceptional curves in the branch corresponding to $P_i$ of the minimal good resolution of  $\mathrm{Spec} (R(\mathbb P_k^1,D))$.
Therefore, in the proof of Theorem \ref{$F$-rationality p(m)}, we will use $T(\frac{d_i}{c_i} )$  to control the exceptional curves in the branch corresponding to $P_i$ in the dual graph.

\begin{ex}
$T([[2,3,2,4,2,2,5]])=(3,4,5)$.
\end{ex}

\begin{lem}\label{lem T(a)}
For any positive integer $l$, let $c^{(l)},d^{(l)}\in\mathbb N$  with  $0< c^{(l)}< d^{(l)}$.
We assume that for any $l$, $T(\frac{d^{(l)}}{c^{(l)}})$ is constant 
and $T(\frac{d^{(l)}}{c^{(l)}})=(f_1,f_2,\cdots, f_m)\in \mathbb N^m$.
Suppose that $f_j\ge 3$ for any $1\le j\le m$.
\begin{enumerate}
\item
There is a subsequence of $\left\{\frac{d^{(l)}}{c^{(l)}}\right\}_{l\in \mathbb N}$ which is constant or strictly decreasing.

\item
If the sequence of $\left\{\frac{d^{(l)}}{c^{(l)}}\right\}_{l\in \mathbb N}$ is strictly decreasing, then $\displaystyle \lim_{l\to \infty}\frac{d^{(l)}}{c^{(l)}}$ is a rational number greater than or equal to 1.
\end{enumerate}
\end{lem}

\begin{proof}
Since $T(\frac{d^{(l)}}{c^{(l)}})$ is the sequence obtained by removing 2 from the sequence of numbers representing the Hirzebruch-Jung continued fraction of $\frac{d^{(l)}}{c^{(l)}}$,
we can express the Hirzebruch-Jung continued fraction of $\frac{d^{(l)}}{c^{(l)}}$   as
\[
\frac{d^{(l)}}{c^{(l)}}=[[(2)^{e_0^{(l)}},f_{1},(2)^{e_1^{(l)}},f_2,\dots,f_{m-1},(2)^{e_{m-1}^{(l)}},f_m,(2)^{e_m^{(l)}}]]
\]
for some $e_0^{(l)},e_1^{(l)},\dots,e_m^{(l)}\in \mathbb N$.

First, we prove $(1)$.
Suppose that there is not a subsequence of $\left\{\frac{d^{(l)}}{c^{(l)}}\right\}_{l\in \mathbb N}$ which is constant.
Then there exists $i$ such that $\limsup_{l \to \infty} e_i^{(l)}=\infty$.
Let $g=\min_{0\le i\le m}\{i \mid \limsup_{l \to \infty} e_i^{(l)}=\infty\}.$
Then by taking a subsequence of $\left\{\frac{d^{(l)}}{c^{(l)}}\right\}_{l\in \mathbb N}$,
we may assume that $\{e_i^{(l)}\}_l$ is constant for any fixed $i\in\{ 0,\dots, g-1\}$ and $\{e_g^{(l)}\}_l$ is strictly increasing.
Therefore by Lemma \ref{[[a]]<[[b]]}, there is a subsequence of $\left\{\frac{d^{(l)}}{c^{(l)}}\right\}_{l\in \mathbb N}$ which is strictly decreasing.

(2) 
Since $f_j\ge 3$ and $\left\{\frac{d^{(l)}}{c^{(l)}}\right\}_{l\in \mathbb N}$ is strictly decreasing, Lemma \ref{[[a]]<[[b]]} implies that $(e_0^{(l)},\cdots,e_m^{(l)})<_{\mathrm{lex}} (e_0^{(l+1)},\cdots,e_m^{(l+1)})$ for any $l\in \mathbb N$, where $<_{\mathrm{lex}}$ is the lexicographic order.
Let $g=\min_{0\le i\le m}\{i \mid \limsup_{l \to \infty} e_i^{(l)}=\infty\}.$
Then for any sufficiently large number $l$, $e_i^{(l)}$ is constant for any fixed $i\in\{ 0,\dots, g-1\}$ and $e_g^{(l)}<e_g^{(l+1)}$.
Let $e_i=\lim_{l \to \infty} e_i^{(l)}$ for  $0\le i \le g-1$.
By Lemma \ref{[[a]]<[[b]]}, we have
\[
[[(2)^{e_0^{(l)}},f_{1},\dots,(2)^{e_{g-1}^{(l)}},f_g,(2)^{e_g^{(l)}},2]]\le
\frac{d^{(l)}}{c^{(l)}}\le[[(2)^{e_0^{(l)}},f_{1},\dots,(2)^{e_{g-1}^{(l)}},f_g,(2)^{e_g^{(l)}}]].
\]
Note that $[[(2)^e]] = \frac{e+1}{e}$ for any $e\in \mathbb N$ by Example \ref{[[2]]}.
Hence we have
\[
\lim_{l\to \infty}\frac{d^{(l)}}{c^{(l)}}=[[(2)^{e_0},f_{1},\dots,(2)^{e_{g-1}},f_g,1]]=[[(2)^{e_0},f_{1},\dots,(2)^{e_{g-1}},f_g-1]],
\]
which implies that $\lim_{l\to \infty}\frac{d^{(l)}}{c^{(l)}}$ is a rational number greater than or equal to 1 by Lemma \ref{[[a]]>1}.
\end{proof}

\begin{lem}\label{rationality lim}
For any positive integer $l$, let $D_l=sP_0-\sum_{i=1}^r \frac{c^{(l)}_i}{d^{(l)}_i} P_i$ be an ample $\mathbb Q$-divisor on $\mathbb P_k^1$, where $s,c^{(l)}_i,d^{(l)}_i\in\mathbb N$  with  $0< c^{(l)}_i< d^{(l)}_i$,  and $P_i$ are distinct points of $\mathbb P_k^1$.
We assume that $\frac{c^{(l)}_i}{d^{(l)}_i}\le \frac{c^{(l+1)}_i}{d^{(l+1)}_i}$ for any $i,l$ and
$\lim_{l\to \infty}\frac{c^{(l)}_i}{d^{(l)}_i}\in\mathbb Q$ for any $i$.
Let $c_i,d_i$ be positive integers with  $\frac{c_i}{d_i}=\lim_{l\to \infty}\frac{c^{(l)}_i}{d^{(l)}_i}$.
Let $D=sP_0-\sum_{i=1}^{r}\frac{c_i}{d_i}P_i$.
Assume $R(\mathbb P_k^1,D_l)$ has a rational singularity for any $l$.
For any positive integer $n$, let $B^l_n = -p [-nD_l] + [-pnD_l]$  and $B_n = -p [-nD] + [-pnD]$ and let $(B^l_n)_{\mathrm{red}}$  and $(B_n)_{\mathrm{red}}$ be the reduced divisors with the same support as $B^l_n$ and $B_n$, respectively.
\begin{enumerate}
\item
We have
$$\mathrm{deg}[nD]=sn-\sum_{i=1}^{r}\left\lceil  \frac{nc_i}{d_i}  \right\rceil\ge -1$$
for any positive integer $n$.
In particular,   if $\mathrm{deg}D>0$, then $R(\mathbb P_k^1,D)$ has a rational singularity.

\item
If $p$ does not divide any $d_i$, then
\[ \mathrm{deg} [ -pnD ] + \mathrm{deg} (B_n)_{\mathrm{red}} \le  1\]
for any positive integer $n$.  

\item
If   $p>d_i$ for any $i$, then
\[ \mathrm{deg} [ -pnD_l ] + \mathrm{deg} (B^l_n)_{\mathrm{red}}\le \mathrm{deg} [ -pnD ] + \mathrm{deg} (B_n)_{\mathrm{red}}\]
for any positive integers $l$ and $n$.
\end{enumerate}
\end{lem}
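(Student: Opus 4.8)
The plan is to prove the three parts in order, since parts (2) and (3) both rest on the degree bound of part (1); throughout, ``for $l\gg 0$'' means ``for all $l$ exceeding a bound depending on $n$ (and on $i$)''. For part (1), the hypothesis that each $R(\mathbb P^1_k,D_l)$ has a rational singularity together with Theorem \ref{graded rational singularity} gives $\mathrm{deg}[nD_l]=sn-\sum_{i=1}^r\lceil nc^{(l)}_i/d^{(l)}_i\rceil\ge -1$ for all $n$ and $l$. Because $\lceil\,\cdot\,\rceil$ is left-continuous and $nc^{(l)}_i/d^{(l)}_i$ increases to $nc_i/d_i$, the integer-valued sequence $\lceil nc^{(l)}_i/d^{(l)}_i\rceil$ is eventually equal to $\lceil nc_i/d_i\rceil$ (either $c^{(l)}_i/d^{(l)}_i$ is eventually equal to $c_i/d_i$, or it stays strictly below it, in which case $nc^{(l)}_i/d^{(l)}_i\in(\lceil nc_i/d_i\rceil-1,\,nc_i/d_i)$ for $l\gg 0$). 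Hence $\mathrm{deg}[nD]=\mathrm{deg}[nD_l]\ge -1$ for $l\gg 0$, and since the left-hand side is independent of $l$ this holds for every $n$. If in addition $\mathrm{deg}D>0$, then $D$ is ample on $\mathbb P^1_k$ by Remark \ref{ample divisor on curve}(1), so $R(\mathbb P^1_k,D)$ has a rational singularity by Theorem \ref{graded rational singularity}.

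For part (2), I would repeat the computation in the proof of Theorem \ref{$F$-rationality when p not | d_i}, feeding in $\mathrm{deg}[pnD]\ge -1$ from part (1) in place of the rational-singularity hypothesis used there. Since $p$ is prime with $p\nmid d_i$, we have $d_i\mid nc_i\iff d_i\mid pnc_i$; let $t$ be the number of $i$ with $nc_i/d_i\notin\mathbb Z$. From $\mathrm{deg}[-pnD]=-pns+\sum_i[pnc_i/d_i]$ and $\mathrm{deg}[pnD]=pns-\sum_i\lceil pnc_i/d_i\rceil$ one reads off $\mathrm{deg}[-pnD]=-\mathrm{deg}[pnD]-t$, while $\mathrm{deg}(B_n)_{\mathrm{red}}\le t$ by Remark \ref{rem not in Z}. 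Adding these gives $\mathrm{deg}[-pnD]+\mathrm{deg}(B_n)_{\mathrm{red}}\le-\mathrm{deg}[pnD]\le 1$.

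For part (3), the argument is a comparison of integer parts and supports for $D_l$ against those for the limit $D$, valid for $l\gg 0$. Set $t=\sharp\{i:nc_i/d_i\notin\mathbb Z\}$ and $t_l=\sharp\{i:nc^{(l)}_i/d^{(l)}_i\notin\mathbb Z\}$. By the analysis of part (1), for $l\gg 0$ one has $t_l=t+u$, where $u=\sharp\{i:nc_i/d_i\in\mathbb Z\text{ but }c^{(l)}_i/d^{(l)}_i<c_i/d_i\text{ for every }l\}$, and $\mathrm{deg}(B^l_n)_{\mathrm{red}}\le t_l$ by Remark \ref{rem not in Z}. On the other hand $\mathrm{deg}[-pnD_l]-\mathrm{deg}[-pnD]=\sum_i\bigl([pnc^{(l)}_i/d^{(l)}_i]-[pnc_i/d_i]\bigr)$, and since $[\,\cdot\,]$ is right-continuous but fails left-continuity only at integers, for $l\gg 0$ each summand is $0$ except when $pnc_i/d_i\in\mathbb Z$ and $c^{(l)}_i/d^{(l)}_i<c_i/d_i$ for every $l$, where it equals $-1$; letting $S$ be the set of such indices, $\mathrm{deg}[-pnD_l]=\mathrm{deg}[-pnD]-|S|$. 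Since $nc_i/d_i\in\mathbb Z$ forces $pnc_i/d_i\in\mathbb Z$, the condition defining $u$ implies that defining $S$, so $u\le|S|$. Finally, because $p>d_i$ for every $i$, the coefficient $[pnc_i/d_i]-p[nc_i/d_i]$ of $B_n$ at $P_i$ is $0$ when $d_i\mid nc_i$ and positive otherwise (this is exactly where $p>d_i$, rather than just $p\nmid d_i$, is used), so $\mathrm{deg}(B_n)_{\mathrm{red}}=t$ \emph{exactly}. Combining these facts,
\begin{align*}
\mathrm{deg}[-pnD_l]+\mathrm{deg}(B^l_n)_{\mathrm{red}}&\le(\mathrm{deg}[-pnD]-|S|)+(t+u)\\
&\le\mathrm{deg}[-pnD]+t=\mathrm{deg}[-pnD]+\mathrm{deg}(B_n)_{\mathrm{red}}.
\end{align*}
I expect part (3) to be the main obstacle: it demands bookkeeping that tracks, uniformly in $i$ and for all large $l$, exactly how the floors of the approximating fractions lie relative to those of the limit, and then pairs this against the \emph{exact} identity $\mathrm{deg}(B_n)_{\mathrm{red}}=t$ for the limit divisor. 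Parts (1) and (2) become routine once the one-sided continuity of $\lceil\,\cdot\,\rceil$ and $[\,\cdot\,]$ has been noted.
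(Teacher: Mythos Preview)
Your proof is correct and follows essentially the same approach as the paper's. The only differences are cosmetic bookkeeping in part (3): the paper works with the sets $U_n=\{i:nc_i/d_i\in\mathbb Z,\ c^{(l)}_i/d^{(l)}_i\ne c_i/d_i\text{ for all }l\}$ and $V_n=\{i:nc_i/d_i\notin\mathbb Z\}$ (your $u$ and $t$), bounds $\sum_i[pnc_i/d_i]-\sum_i[pnc^{(l)}_i/d^{(l)}_i]\ge\sharp U_n$ directly for \emph{all} $l$, and uses only the inclusion $\{i:nc^{(l)}_i/d^{(l)}_i\notin\mathbb Z\}\subset U_n\cup V_n$ for $l\gg0$, whereas you compute $\deg[-pnD_l]-\deg[-pnD]=-|S|$ exactly via your larger set $S\supseteq U_n$ and assert the equality $t_l=t+u$; both routes yield the same final chain of inequalities.
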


\begin{proof}
(1)
If the lemma fails, then   there exists a positive integer $m$ such that
$$\mathrm{deg}[mD]=sm-\sum_{i=1}^{r}\left\lceil \frac{mc_i}{d_i}  \right\rceil\le -2.$$
Since $R(\mathbb P_k^1,D_l)$ has a rational singularity for any $l$, we have
$$\mathrm{deg}[nD_{l}]=sn-\sum_{i=1}^{r}\left\lceil  \frac{nc^{(l)}_i}{d^{(l)}_i}  \right\rceil\ge -1$$
for any positive integer $n$ by Theorem \ref{graded rational singularity}.
Since  $\frac{c^{(l)}_i}{d^{(l)}_i}\le \frac{c^{(l+1)}_i}{d^{(l+1)}_i}$ for any $i,l$, we have
$$\left\lceil  \frac{mc_i}{d_i}\right\rceil=\left\lceil  \frac{mc^{(l)}_i}{d^{(l)}_i}\right\rceil$$
for any $i$ and   any sufficiently large number $l$.
Therefore for  any sufficiently large $l$, we have
$$-1\le \mathrm{deg}[mD_{l}]=\mathrm{deg}[mD]\le -2,$$
which is contradiction.

If $\mathrm{deg}D>0$, then $R(\mathbb P_k^1,D)$ has a rational singularity by Theorem \ref{graded rational singularity}.

\noindent
(2) If the lemma fails, then   there exists a positive integer $m$ such that
\[ \mathrm{deg} [ -pmD ] + \mathrm{deg} (B_m)_{\mathrm{red}} \ge  2.\]
Let $l=\sharp\{i\in \mathbb N \mid  \frac{mc_i}{d_i}\not\in \mathbb Z \}$.
Then we have 
 $$\left\lceil  \frac{pmc_j}{d_j}\right\rceil-\left[ \frac{pmc_j}{d_j}\right]=1$$ for $j\in \{i\in \mathbb N\ |\ \frac{mc_i}{d_i}\not\in \mathbb N \}$
and $\mathrm{deg}(B_m)_{\mathrm{red}}\le l$.
Since we have for  every positive integer $n$,
$$\mathrm{deg}[nD]=sn-\sum_{i=1}^{r}\left\lceil   \frac{nc_i}{d_i}  \right\rceil\ge -1$$
by Lemma \ref{rationality lim} (1),
we have
\begin{align*}
\mathrm{deg}[-pmD]&= -pms+\sum_{i=1}^r\left[ \frac{pmc_i}{d_i}\right]
= -pms+\sum_{i=1}^r\left\lceil  \frac{pmc_i}{d_i}\right\rceil-l\\
&=-\mathrm{deg}[pmD]-l\le 1-l.
\end{align*}
Hence we have 
\[
2\le \mathrm{deg}[-pmD]+\mathrm{deg}(B_m)_{\mathrm{red}}\le 1-l+l=1,\]
 which is a contradiction.
 
\noindent
(3) 
Let $I=\left\{i\in\mathbb N \mid 1\le i\le r\right\}$,
$U_{l,n}=\{i\in I \mid  \frac{nc_i}{d_i}\in \mathbb Z, \frac{c^{(l)}_i}{d^{(l)}_i} \neq\frac{c_i}{d_i} \}$ and $V_n=\{i\in I\mid  \frac{nc_i}{d_i}\not\in \mathbb Z\}.$
If $\frac{nc_i}{d_i}\in \mathbb Z$ and $\frac{c^{(l)}_i}{d^{(l)}_i} \neq\frac{c_i}{d_i}$, then
$\left[ \frac{pnc_i}{d_i}\right]-\left[ \frac{pnc^{(l)}_i}{d^{(l)}_i}\right]\ge 1$.
Therefore we have
for any positive integers $l,n$,
\[\sum_{i=1}^r \left[ \frac{pnc_i}{d_i}\right]-\sum_{i=1}^r \left[ \frac{pnc^{(l)}_i}{d^{(l)}_i}\right]\ge \sharp U_{l,n}.
\]
Since $p>d_i$ for any $i$, we have $-p\left[\frac{nc_j}{d_j}\right]+\left[\frac{pnc_j}{d_j}\right]\ge 1$ for  any positive integer $n$ and $j\in V_n$.
Therefore we have for any positive integer $n$,
\begin{align*}
\mathrm{deg} (B_n)_{\mathrm{red}}&=\mathrm{deg}\left(-p\left[ \sum_{i=1}^r\frac{nc_i}{d_i}P_i\right]+\left[\sum_{i=1}^r \frac{pnc_i}{d_i}P_i\right]\right)_{\mathrm{red}}=\sharp V_n.
\end{align*}
We have $i\in U_{n,l}\cup V_n$
for any positive integers $l,n$ and $i\in I$ with $ \frac{nc^{(l)}_i}{d^{(l)}_i}\not\in \mathbb Z$.
Hence we have for any positive integers $l,n$,
\[
\sharp U_{l,n}+\sharp V_n\ge\sharp\left\{i\in I\ \middle|\  \frac{nc^{(l)}_i}{d^{(l)}_i}\not\in \mathbb Z\right\}\ge\mathrm{deg} (B^l_n)_{\mathrm{red}}.
\]
Therefore for any positive integers $l,n$,
\begin{align*}
&\mathrm{deg} [ -pnD ] + \mathrm{deg} (B_n)_{\mathrm{red}}
=-pns+\sum_{i=1}^r \left[ \frac{pnc_i}{d_i}\right]+\mathrm{deg} (B_n)_{\mathrm{red}}\\
\ge& -pns+\sum_{i=1}^r \left[ \frac{pnc^{(l)}_i}{d^{(l)}_i}\right]+\sharp U_{l,n}+\sharp V_n
\ge\mathrm{deg} [ -pnD_l ] + \mathrm{deg} (B^l_n)_{\mathrm{red}}.
\end{align*}

\end{proof}

\begin{thm}\label{$F$-rationality p(m)}
Let $m\in \mathbb N$.
 There exists a positive integer $p(m)$ such that 
   $R$ is $F$-rational for any two-dimensional  graded ring $R$ with a rational singularity,  $e(R)=m$ and $R_0=k$, an algebraically closed field of  characteristic  $p\ge p(m)$.
\end{thm}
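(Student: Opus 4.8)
The plan is to translate the statement into the Pinkham--Demazure language, use the multiplicity formula of Proposition~\ref{Z^2} to bound every combinatorial invariant of the minimal good resolution except the number of $(-2)$-curves, and then conclude by a limiting argument built on Lemma~\ref{rationality lim}. By Theorem~\ref{DP const} and Remark~\ref{ample divisor on curve} I may write $R = R(\mathbb P^1_k, D)$ with $D = sP_0 - \sum_{i=1}^r \frac{c_i}{d_i}P_i$ and $0 < c_i < d_i$. If $R$ is not $F$-rational, then $r = s+1$ by Proposition~\ref{f-rational deg 1}(2) and $p \mid d_{i_0}$ for some $i_0$ by Theorem~\ref{$F$-rationality when p not | d_i} (so $d_{i_0} \ge p$); moreover $s \ge 2$ and $r \ge 3$, because $r = 2$ (hence $s = 1$) would make the minimal good resolution a chain, i.e.\ $R$ a two-dimensional cyclic quotient singularity, which is $F$-rational. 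Since multiplicity, rationality and $F$-rationality of $R(\mathbb P^1_k,D)$ depend only on $s$ and the $c_i/d_i$ (Theorems~\ref{dual graph of graded ring} and \ref{criterion}) and not on the positions of the $P_i$, it suffices to bound $p$ in terms of $m = e(R)$ for such $D$.

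Write $\frac{d_i}{c_i} = [[b_{i1},\dots,b_{im_i}]]$. Combining Theorem~\ref{dual graph of graded ring} with Proposition~\ref{Z^2},
\[ m = e(R) = n_0(s-2) + \sum_{i,j} n_{ij}(b_{ij}-2) + 2, \]
where $n_0 = \mathrm{Coeff}_{E_0}(Z) \ge 2$ by Corollary~\ref{n_0=min}, each $n_{ij} \ge 1$, $s \ge 2$ and $b_{ij} \ge 2$. Every summand is $\ge 0$, so $s \le \tfrac{m+2}{2}$ (hence $r \le \tfrac{m+4}{2}$), every $b_{ij} \ge 3$ satisfies $b_{ij} \le m$, and there are at most $m-2$ indices $(i,j)$ with $b_{ij} \ge 3$. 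Consequently $T\!\big(\tfrac{d_i}{c_i}\big)$ (Definition~\ref{T frac}) takes only finitely many values, all depending on $m$ alone, and the \emph{number} of maximal blocks of $2$'s occurring in the continued fractions $\tfrac{d_1}{c_1},\dots,\tfrac{d_r}{c_r}$ is bounded in terms of $m$. The \emph{lengths} of these blocks --- equivalently $C_2(R)$ --- form the only piece of data not controlled by $m$.

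Now suppose no $p(m)$ exists. Then there are primes $p_l \to \infty$ and two-dimensional graded rings $R_l = R(\mathbb P^1_k, D_l)$ with a rational singularity, $e(R_l) = m$, none $F$-rational, and by the reduction above $D_l = s P_0 - \sum_{i=1}^r \frac{c^{(l)}_i}{d^{(l)}_i} P_i$ with $r = s+1$, $s \ge 2$ and $p_l \mid d^{(l)}_{i_0}$; after passing to a subsequence the invariants $s, r, i_0$ are constant, using the finiteness above. Passing to a further subsequence, I may assume that each $T\!\big(\tfrac{d^{(l)}_i}{c^{(l)}_i}\big) = T_i$ is constant and that each of the (finitely many) maximal blocks of $2$'s appearing in the $\tfrac{d^{(l)}_i}{c^{(l)}_i}$ has length non-decreasing in $l$ and either eventually constant or tending to $\infty$; then each $\tfrac{c^{(l)}_i}{d^{(l)}_i}$ is non-decreasing in $l$, since inserting a $2$ anywhere into a Hirzebruch--Jung continued fraction strictly decreases its value (a short consequence of Lemmas~\ref{[[a,b]]=[[a,[[b]]]]}, \ref{[[a]]>1} and \ref{[[a]]<[[b]]}). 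Since the shape $T_i$ is fixed, $\tfrac{c^{(l)}_i}{d^{(l)}_i}$ converges to a fixed rational number $\tfrac{c_i}{d_i} \in (0,1]$ as $l \to \infty$ (a block of $2$'s of bounded length survives in the limiting continued fraction, while a block tending to $\infty$ collapses the continued fraction to a shorter one with bounded entries). Set $D = s P_0 - \sum_{i=1}^r \frac{c_i}{d_i} P_i$. For $l$ large enough that $p_l$ exceeds the finitely many denominators $d_i$, we have $p_l \nmid d_i$ for all $i$, so Lemma~\ref{rationality lim}(2) gives $\deg[-p_l n D] + \deg(B_n)_{\mathrm{red}} \le 1$ for every $n \ge 1$, and then Lemma~\ref{rationality lim}(3) gives $\deg[-p_l n D_l] + \deg(B^l_n)_{\mathrm{red}} \le 1$ for every $n \ge 1$. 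By Theorem~\ref{criterion} this forces $R_l$ to be $F$-rational, contradicting the choice of $R_l$; hence $p(m)$ exists.

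The main obstacle is the finiteness step together with the correct extraction of the limit: one has to squeeze out of the single identity $m = e(R)$ enough boundedness of $(s, r, T_i)$ while correctly isolating the genuinely unbounded feature, namely the long strings of $(-2)$-curves, and then verify that the limits $\tfrac{c^{(l)}_i}{d^{(l)}_i} \to \tfrac{c_i}{d_i}$ put us precisely in the setting of Lemma~\ref{rationality lim}. What makes this work is the monotonicity of Hirzebruch--Jung continued fractions under insertion of $2$'s together with the simultaneous stabilization of all the discrete invariants along a subsequence; arranging all of this bookkeeping is the delicate part.
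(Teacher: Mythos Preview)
Your proof is correct and follows essentially the same route as the paper's: both argue by contradiction, extract from the multiplicity formula of Proposition~\ref{Z^2} a uniform bound on $s$, on the values $b_{ij}\ge 3$, and on their number, pass to subsequences so that $T(d_i^{(l)}/c_i^{(l)})$ stabilizes, arrange monotonicity of the fractions $c_i^{(l)}/d_i^{(l)}$, identify the rational limit coming from the collapse of the growing $(-2)$-blocks, and finish via Lemma~\ref{rationality lim}(2),(3) together with Theorem~\ref{criterion}. The only cosmetic differences are that you make each maximal block of $2$'s individually monotone (whereas the paper singles out the first block whose length tends to infinity and sandwiches the tail), you invoke Theorem~\ref{$F$-rationality when p not | d_i} to note $p\mid d_{i_0}$ (true but not needed), and you explicitly dispose of the chain case $r\le 2$; none of this changes the substance of the argument.
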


\begin{proof}
If the theorem fails, then by Theorem \ref{DP const} and Theorem \ref{graded rational singularity}, there exist a positive integer $m$ and a sequence  $\{D_l\}_{l\in \mathbb N}$ of
ample $\mathbb Q$-divisors  on $\mathbb P_{k_l}^1$
  such that $R(\mathbb P_{k_l}^1,D_l)$ is a
two-dimensional non-$F$-rational  graded ring with a rational singularity, 
$$e(R(\mathbb P_{k_l}^1,D_l))=m$$
 and
$$\lim_{l\to \infty} p_l=\infty,$$
where  $p_l$ is the characteristic of the field $k_l$.
Let $E_0^{l}$ be the central curve of the exceptional set of the minimal good resolution of $\mathrm{Spec}(R(\mathbb P_{k_l}^1,D_l))$.
Then $-(E_0^{l})^2\le m$ by Proposition \ref{Z^2}.
Therefore by taking a subsequence of $\{D_l\}_{l\in \mathbb N}$, we may assume that $(E_0^{l})^2$ is constant for any $l$.
We put $s=-(E_0^{l})^2$  for any $l$.
Since $R(\mathbb P_{k_l}^1,D_l)$ has a rational singularity and is not $F$-rational, by Proposition \ref{f-rational deg 1}.(2), we may put $D_l=sP^{(l)}_0-\sum_{i=1}^{s+1}\frac{c^{(l)}_i}{d^{(l)}_i}P^{(l)}_i$, where $s,c^{(l)}_i,d^{(l)}_i\in\mathbb N$ with  $0<c^{(l)}_i<d^{(l)}_i$, and $P^{(l)}_i$ are distinct points of $\mathbb P_{k_l}^1$.

We denote by $e_j^{(l)}$ the number of $(-j)$-curves in the exceptional set of the minimal good resolution of $\mathrm{Spec}(R(\mathbb P_{k_l}^1,D_l))$.
By Proposition \ref{Z^2}, we have
\[
\sum_{j\ge 2}e_j^{(l)}(j-2)+2\le e(R(\mathbb P_{k_l}^1,D_l))= m.
\]
Hence we have
$e_j^{(l)} \le m$  for any $l,j$  with $3\le j \le m$ and 
$e_{j'}^{(l)}=0$ for any $l,j'$ with $j'\ge m+1$.
Therefore we may assume that 
$e_j^{(l)}$ is constant for any $l$ when we fix $j$ with $3\le j \le m$.
By Theorem \ref{dual graph of graded ring}, the number of the  branch in the dual graph of the minimal good resolution of $\mathrm{Spec}(R(\mathbb P_{k_l}^1,D_l))$ is $s+1$ and is constant for any $l$.
Then we may assume that the number of
$(-j)$-curves in the branch corresponding to $P^{(l)}_i$ in the dual graph is constant for any $l$ when we fix $i,j$ with $j\neq 2$.
Since the Hirzebruch-Jung continued fraction of  $\frac{d^{(l)}_i}{c^{(l)}_i}$ is the sequence of negatives of self intersection numbers of the exceptional curves  in the branch corresponding to $P^{(l)}_i$ in the minimal good resolution of  $\mathrm{Spec} (R(\mathbb P_{k_l}^1,D_l))$ by Theorem \ref{dual graph of graded ring} and Remark \ref{rem dual graph HJ fraction}(2),
$T(\frac{d^{(l)}_i}{c^{(l)}_i} )$ is the sequence of negatives of self intersection numbers of the exceptional curves, excluding those with self-intersection $-2$, in the branch corresponding to $P^{(l)}_i$ in the dual graph.
Therefore we may assume that
 $T(\frac{d^{(l)}_i}{c^{(l)}_i})$ is constant for any $l$ when we fix $i$.
Thus when we fix $i$, we may assume that a sequence $\left\{\frac{c^{(l)}_i}{d^{(l)}_i}\right\}_l$ is constant or strictly increasing by Lemma \ref{lem T(a)} (1).
Let  $I=\left\{i\in \mathbb N\ \middle|\ \left\{\frac{c^{(l)}_i}{d^{(l)}_i}\right\}_l\ \mbox{is strictly increasing} \right\}.$
Then we have for $i\in I$,
$\lim_{l\to \infty}\frac{d^{(l)}_i}{c^{(l)}_i}\in \mathbb Q_{\ge 1}$ by Lemma \ref{lem T(a)} (2).
Let $c_i,d_i$ be positive integers with   $0< c_i\le d_i$ and
 \[
\frac{c_i}{d_i} = \lim_{l\to \infty}\frac{c^{(l)}_i}{d^{(l)}_i}.
\]
For any positive integer $j,l$, let $F_j^{(l)}=sP_0^{(l)}-\sum_{i=1}^{s+1}\frac{c^{(j)}_i}{d^{(j)}_i}P_i^{(l)}$ and $F^{(l)}=sP_0^{(l)}-\sum_{i=1}^{s+1}\frac{c_i}{d_i}P_i^{(l)}$ be $\mathbb Q$-divisors on $\mathbb P_{k_l}^1$.
Since $R(\mathbb P_{k_j}^1,D_j)$ has a rational singularity, it follows from Theorem \ref{graded rational singularity} that $\mathrm{deg} [nF_j^{(l)}]=\mathrm{deg} [nD_j]\ge -1$ for any $n\in \mathbb Z$ and $j,l\in \mathbb N$.
Hence  $R(\mathbb P_{k_l}^1,F_j^{(l)})$ is a
two-dimensional graded ring with a rational singularity for any $j\in \mathbb N$ by Theorem \ref{graded rational singularity}.
Since $\lim_{l\to \infty} p_l=\infty,$ we may assume that $p_l>d_i$ for any $i,l$.
By Lemma \ref{rationality lim} (2) and (3), we have for any positive integers $j,l,n$,
\[ \mathrm{deg} [ -p_lnF_j^{(l)} ] + \mathrm{deg} (B^{(l)}_{j,n})_{\mathrm{red}}\le \mathrm{deg} [ -p_lnF^{(l)} ] + \mathrm{deg} (B^{(l)}_{n})_{\mathrm{red}}\le1,\]
where $B^{(l)}_{j,n}=-p_l [-nF_j^{(l)}] + [-p_lnF_j^{(l)}]$ and $B^{(l)}_{n}=-p_l [-nF^{(l)}] + [-p_lnF^{(l)}]$.  
Since $F_l^{(l)}=D_l$ for any $l$, we have for any positive integers $l,n$
\[ \mathrm{deg} [ -p_lnD_l ] + \mathrm{deg} (B^{(l)}_{l,n})_{\mathrm{red}}\le 1.\]
By Theorem \ref{criterion},  $R(\mathbb P_{k_l}^1,D_l)$ is $F$-rational for any positive integer $l$, which is contradiction. 
\end{proof}

\begin{ex}\label{ex1}
Let $D=2P_0-\frac{p+1}{2p}P_1-\frac{p-1}{p}P_2-\frac{1}{2}P_3$, where $P_i$ are distinct points of $\mathbb P_k^1$.
Then $R=R(\mathbb P_k^1,D)$ has a rational singularity with $e(R)=\left\lceil  \frac{p+1}{2}  \right\rceil$
 but is not $F$-rational.
Indeed, we have for $m\in \mathbb N$,
\[
\mathrm{deg}[2mD]=\left[\frac{2m}{p}\right]-\left\lceil  \frac{m}{p}  \right\rceil\ge -1
\]
and
\[
\mathrm{deg}[(2m-1)D]=\left[\frac{2m-1}{p}\right]-\left\lceil  \frac{p+2m-1}{2p}  \right\rceil\ge -1.
\]
Therefore $R$ has a rational singularity by Theorem \ref{graded rational singularity}.
Since \[ \mathrm{deg} [ -pD ] + \mathrm{deg} (B_1)_{\mathrm{red}} = 2  ,\]
where $B_1=-p [-D] + [-pD]$,   $R$ is not $F$-rational by Theorem \ref{criterion}.

If  $p=2$, then the dual graph of the minimal good resolution of $\mathrm{Spec} (R)$ is the following:
\begin{eqnarray*}
\scalebox{0.6}{
\xymatrix@C=12pt@R=12pt{
 & *++[o][F]{-2} \ar@{-}[d] \ar@{}[r]|(0.5) *{1}
 &
 &
\\
  *++[o][F]{-2}  \ar@{-}[r]  \ar@{}[d]|(0.8) *{1}
 & *++[o][F]{-2}  \ar@{-}[r] \ar@{}[d]|(0.8) *{2}
 & *++[o][F]{-2}  \ar@{-}[r] \ar@{}[d]|(0.8) *{1}
 & *++[o][F]{-2}  \ar@{-}[r] \ar@{}[d]|(0.8) *{1}
 & *++[o][F]{-2} \ar@{}[d]|(0.8) *{1} \\
&&&&
}
}
\end{eqnarray*}
Here the  number next to a vertex means  the coefficient of the  relevant exceptional divisor in the fundamental cycle.
Therefore we have $e(R)=2$ by Proposition \ref{Z^2}.

If  $p\ge 3$, then the dual graph of the minimal good resolution of $\mathrm{Spec} (R)$ is the following:
\begin{eqnarray*}
\scalebox{0.6}{
\xymatrix@C=12pt@R=12pt{
 &
 &
 & 
 &
 &*++[o][F]{-2}  \ar@{-}[d] \ar@{}[r]|(0.6) *{\frac{p+1}{2}}
 &
 &
\\
 *++[o][F]{-2}  \ar@{-}[r]  \ar@{}[d]|(0.8) *{1}
 &*++[o][F]{-2}  \ar@{-}[r]  \ar@{}[d]|(0.8) *{2}
 & \cdots \ar@{-}[r]  
 &*++[o][F]{-2}  \ar@{-}[r] \ar@{}[d]|(0.8) *{p-2}
 &*++[o][F]{-2}  \ar@{-}[r] \ar@{}[d]|(0.8) *{p-1}
 &*++[o][F]{-2}  \ar@{-}[r]  \ar@{}[d]|(0.8) *{p}
 &*++[o][F]{-2}  \ar@{-}[r] \ar@{}[d]|(0.8) *{\frac{p+1}{2}}
 &*++[o][F]{-\frac{p+1}{2}} \ar@{}[d]|(0.8) *{1}\\
&&&&&&&
}
}
\end{eqnarray*}
Note that $\min \{ n \in\mathbb N \;|\; \mathrm{deg} [ n D] \ge 0 \} =p$.
Therefore we can compute the fundamental cycle by Lemma \ref{formula} and Corollary \ref{n_0=min}.
We have $e(R)=\frac{p+1}{2}$ by Proposition \ref{Z^2}.
\end{ex}

\begin{rem}
Example \ref{ex1} implies that  $p(m)> 2m-1$, where $p(m)$ is the positive integer in Theorem \ref{$F$-rationality p(m)}.
\end{rem}

\begin{rem}
Theorem \ref{$F$-rationality p(m)} does not hold for higher dimensional graded rings.
In fact, consider the ring $R=k[x,y,z,w]/(x^3+y^3+z^3+w^p)$.
Then $R$ has rational singularities but $R$ is not $F$-rational if $3$ does not divide $p-1$ by Theorem \ref{HH F-rational}, \cite[Remark 3.8]{EH} and \cite[Proposition 2.1]{Fe}.
\end{rem}

\section{Classification of $R(\mathbb P_k^1,D)$ which is a rational triple point and rational  fourth point}

In this section we classify normal graded rings $R(\mathbb P_k^1,D)$ with a rational singularity and $e(R(\mathbb P_k^1,D))=3$ and $4$.

\subsection{Preliminaries of classification of $R(\mathbb P_k^1,D)$}
In this subsection, we give results for the classification of $R(\mathbb P_k^1,D)$ with a rational singularity and $e(R(\mathbb P_k^1,D))=3$ and $4$.

\begin{lem}\label{bigger coefficient rationality}
Let $D_1=\sum_{i=1}^ra_iP_i$ and $D_2=\sum_{i=1}^rb_iP_i$ be ample $\mathbb Q$-divisors on $\mathbb P_k^1$, where $P_i$ are distinct points of $\mathbb P_k^1$.
Assume  $a_i \ge b_i$ for any $i$.
If $R(\mathbb P_k^1,D_2)$ has a rational singularity, then $R(\mathbb P_k^1,D_1)$ has a rational singularity.
\end{lem}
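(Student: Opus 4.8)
The plan is to reduce the statement to the numerical criterion for rational singularities of graded rings recorded in Theorem \ref{graded rational singularity}: for an ample $\mathbb{Q}$-divisor $D$ on $\mathbb{P}_k^1$, the ring $R(\mathbb{P}_k^1,D)$ has a rational singularity if and only if $\deg[nD]\ge -1$ for every positive integer $n$. Since by hypothesis $R(\mathbb{P}_k^1,D_2)$ has a rational singularity and $D_2$ is ample on $\mathbb{P}_k^1$, we already know that $\deg[nD_2]\ge -1$ for all $n\ge 1$; the goal is to transfer this lower bound to $D_1$.

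The only input needed is monotonicity of the floor function. Writing $D_1=\sum_i a_iP_i$ and $D_2=\sum_i b_iP_i$, the hypothesis $a_i\ge b_i$ gives $na_i\ge nb_i$ and hence $[na_i]\ge[nb_i]$ for every positive integer $n$, so $[nD_1]=\sum_i[na_i]P_i \ge \sum_i[nb_i]P_i=[nD_2]$ as divisors. Taking degrees,
\[
\deg[nD_1]=\sum_i[na_i]\ \ge\ \sum_i[nb_i]=\deg[nD_2]\ \ge\ -1
\]
for every positive integer $n$.

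Finally, since $D_1$ is ample on $\mathbb{P}_k^1$ by hypothesis (so that $\mathrm{Proj}(R(\mathbb{P}_k^1,D_1))=\mathbb{P}_k^1$ and the construction applies), the criterion of Theorem \ref{graded rational singularity} gives that $R(\mathbb{P}_k^1,D_1)$ has a rational singularity. There is essentially no genuine obstacle in this argument; the only points deserving a word of care are invoking the correct direction of Theorem \ref{graded rational singularity} and using the ampleness of $D_1$, which is part of the hypotheses and is what guarantees the underlying curve is $\mathbb{P}_k^1$ throughout.
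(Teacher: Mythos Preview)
Your proof is correct and follows essentially the same approach as the paper: both arguments use the monotonicity of the floor function to conclude $\deg[nD_1]\ge\deg[nD_2]\ge -1$ for all $n\ge 1$, and then invoke Theorem~\ref{graded rational singularity}. The paper's version is simply a one-line summary of what you have written out in detail.
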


\begin{proof}
Since $\mathrm{deg}[nD_1]\ge \mathrm{deg}[nD_2]$ for any $n\in\mathbb N$,
$R(\mathbb P_k^1,D_1)$ has a rational singularity by Theorem \ref{graded rational singularity}.
\end{proof}

\begin{lem}\label{D form}
Let $D=sP_0 - \sum_{i=1}^r \frac{c_i}{d_i} P_i$ be an ample $\mathbb Q$-divisor on $\mathbb P_k^1$, where $s,c_i,d_i\in \mathbb N$ with  $0<c_i<d_i$, and $P_i$ are distinct points of $\mathbb P_k^1$.
Let $R=R(\mathbb P_k^1,D)$ and $f:X\to \mathrm{Spec} (R)$  the minimal good resolution.
Assume that $R$ has a rational singularity.  
\begin{enumerate}
\item If $e(R)= 3$, then the dual graph of $f$ has the following property;\\
There is unique $(-3)$-curve  and others are $(-2)$-curves.
In this case, $D$ is one of the following: for some $n_i,a,b\in \mathbb Z_{\ge 0}$,
\[3P_0 - \sum_{i=1}^3 \frac{n_i}{n_i+1} P_i \ \ \mbox{ or}\]
 \[2P_0 - \sum_{i=1}^2 \frac{n_i}{n_i+1} P_i-\frac{1}{[[(2)^a,3,(2)^b]]}P_3.\]

\item If $e(R)= 4$,  then the dual graph of $f$ has one of  the following properties;
\begin{enumerate}
\item There is unique $(-4)$-curve  and others are $(-2)$-curves.
In this case, $D$ is one of the following: for some $n_i,a,b\in \mathbb Z_{\ge 0}$,
\[4P_0 - \sum_{i=1}^4 \frac{n_i}{n_i+1} P_i \ \mbox{ or}\]
 \[2P_0 - \sum_{i=1}^2 \frac{n_i}{n_i+1} P_i-\frac{1}{[[(2)^a,4,(2)^b]]}P_3.\]
\item There is unique $(-3)$-curve and others are $(-2)$-curves.
In this case, $D$ is one of the following: for some $n_i,a,b\in \mathbb Z_{\ge 0}$,
\[3P_0 - \sum_{i=1}^4 \frac{n_i}{n_i+1} P_i\ \ \mbox{ or}\]
 \[2P_0 - \sum_{i=1}^2 \frac{n_i}{n_i+1} P_i-\frac{1}{[[(2)^a,3,(2)^b]]}P_3.\]
\item There are two $(-3)$-curves  and others are $(-2)$-curves.
In this case, $D$ is one of the following: for some $n_i,n,a,b,c,d\in \mathbb Z_{\ge 0}$,
\[3P_0 - \sum_{i=1}^2 \frac{n_i}{n_i+1} P_i-\frac{1}{[[(2)^a,3,(2)^b]]}P_3,\]
\[3P_0 - \sum_{i=1}^2 \frac{n_i}{n_i+1} P_i-\frac{1}{[[(2)^a,3,(2)^b,3,(2)^c]]}P_3\ \ \mbox{ or }\]
 \[2P_0 - \frac{n}{n+1} P_1-\frac{1}{[[(2)^a,3,(2)^b]]} P_2 -\frac{1}{[[(2)^c,3,(2)^d]]} P_3.\]
\end{enumerate}
\end{enumerate}
\end{lem}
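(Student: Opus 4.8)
The plan is to recover the combinatorial shape of the minimal good resolution $f$ from the multiplicity via Proposition \ref{Z^2}, and then to translate that shape into a $\mathbb{Q}$-divisor using the star-shaped description of Theorem \ref{dual graph of graded ring}. \emph{Step 1 (shape of the dual graph).} Write the fundamental cycle as $Z=\sum_i n_iE_i$, so that Proposition \ref{Z^2} gives $e(R)-2=\sum_i n_i(-E_i^2-2)$. Since $R$ has a rational singularity, the minimal resolution is the minimal good resolution, so no $E_i$ is a $(-1)$-curve and every term $n_i(-E_i^2-2)$ is a non-negative integer with $n_i\ge 1$. Hence $\sum_i n_i(-E_i^2-2)$ equals $1$ when $e(R)=3$ and $2$ when $e(R)=4$, and partitioning this sum shows: for $e(R)=3$ there is a single $(-3)$-curve, of coefficient $1$ in $Z$, with all other $E_i$ being $(-2)$-curves; for $e(R)=4$ either (a) one $(-4)$-curve of coefficient $1$, or (b) one $(-3)$-curve of coefficient $2$, or (c) two $(-3)$-curves each of coefficient $1$, all remaining $E_i$ being $(-2)$-curves. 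This is the first assertion of each item.

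\emph{Step 2 (recovering $D$).} By Theorem \ref{dual graph of graded ring} the dual graph of $f$ is star-shaped, with central curve $E_0$ satisfying $E_0^2=-s$ and branches $E_{i1}-E_{i2}-\cdots-E_{im_i}$ with $\frac{d_i}{c_i}=[[-E_{i1}^2,\dots,-E_{im_i}^2]]$ and $E_0E_{i1}=1$. I would split into cases according to where the ``bad'' curves sit (the $(-3)$'s, resp.\ the $(-4)$): either such a curve is $E_0$, which forces $s=3$ resp.\ $s=4$, or it lies on a branch, in which case $E_0$ is a $(-2)$-curve and $s=2$. Since $(-2)$-curves contribute $0$ to $e(R)-2$ while $E_0$ contributes $n_0(s-2)$, Step 1 also limits how many bad curves can lie on branches; for instance if $s=3$ in item (2)(c), then $E_0$ is already one of the two $(-3)$-curves and contributes $n_0\ge 1$, so only one more $(-3)$-curve is allowed and it must lie on a single branch, whereas the other two forms in (2)(c) require $s=2$. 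As for the branches, a string of $n$ consecutive $(-2)$-curves gives $\frac{d_i}{c_i}=[[(2)^n]]=\frac{n+1}{n}$ by Example \ref{[[2]]}, hence $\frac{c_i}{d_i}=\frac{n}{n+1}$; a string carrying exactly one $(-k)$-curve ($k\in\{3,4\}$) gives $\frac{d_i}{c_i}=[[(2)^a,k,(2)^b]]$; and a string carrying two $(-3)$-curves gives $\frac{d_i}{c_i}=[[(2)^a,3,(2)^b,3,(2)^c]]$. Substituting into $D=sP_0-\sum_i\frac{c_i}{d_i}P_i$ and relabelling the points produces exactly the displayed families.

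\emph{Step 3 (number of branches, and the main obstacle).} It remains to bound the number $r$ of branches, so that the branches not listed are absorbed by allowing some $n_i=0$. Rationality and Lemma \ref{non-rational s<r-1} give $r\le s+1$, and if $r=s+1$ then Corollary \ref{n_0=min} forces $n_0=\mathrm{Coeff}_{E_0}(Z)\ge 2$; this is incompatible with the value of $n_0$ dictated in Step 2 in several configurations — in particular whenever $E_0$ carries the unique bad curve, so that $n_0=1$ — and there $r\le s$, after which the admissible $r$ matches the number of $P_i$'s written in each form. Finally, when a bad curve lies on a branch one computes the coefficients $n_{i1}=\lceil n_0c_i/d_i\rceil$ and $n_{i,j+1}=\lceil n_{ij}/e_{ij}\rceil$ along it via Lemma \ref{formula}; these are non-increasing, so the coefficient of each bad curve is controlled by $n_0=\min\{n:\deg[nD]\ge 0\}$ and one recovers the coefficients $1$ or $2$ found in Step 1. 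I expect item (2)(c) to be the main obstacle: its three displayed forms correspond to the three sub-configurations (one $(-3)$-curve equal to $E_0$ and the other on a branch; both $(-3)$-curves on one branch; one $(-3)$-curve on each of two branches), and separating them — while excluding spurious possibilities, such as two branch $(-3)$-curves together with $s=3$, which would force $e(R)\ge 5$ — requires tracking $n_0$ and all the branch coefficients at once.
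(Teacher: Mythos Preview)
Your proof is correct and follows essentially the same route as the paper: Proposition \ref{Z^2} to pin down the self-intersections, Lemma \ref{non-rational s<r-1} and Corollary \ref{n_0=min} to bound $r$ (ruling out $r=s+1$ precisely when Step 1 forces $n_0=1$), and Theorem \ref{dual graph of graded ring} together with Example \ref{[[2]]} to read off the continued-fraction shape of each branch. The paper's own proof is terser—it only spells out the two exclusions $(s,r)=(4,5)$ in case (a) and $(s,r)=(3,4)$ in case (c)—and does not invoke Lemma \ref{formula} at all; your closing remarks about recomputing the branch coefficients to ``recover the coefficients $1$ or $2$ found in Step 1'' are extra (the lemma only asserts the form of $D$, and the finer check of which $D$ actually realise $e(R)=3$ or $4$ is deferred to the later propositions), but they are not wrong.
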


\begin{proof}
We prove only  $(2)$, as  $(1)$ is proved similarly.
By  Proposition \ref{Z^2}, the dual graph of $f$ has one of  the following properties;
\begin{enumerate}
\item[(a)] There is unique $(-4)$-curve  and others are $(-2)$-curves.
\item[(b)] There is unique $(-3)$-curve and others are $(-2)$-curves.
\item[(c)] There are two $(-3)$-curves  and others are $(-2)$-curves.
\end{enumerate}

By  Lemma \ref{non-rational s<r-1}, we have $s+1\ge r$. 
Let $Z$ be the fundamental cycle of $f$, and let $E_0$ be the central curve of $f$. 
If   $s+1= r$, then $\mathrm{Coeff}_{E_0}(Z)\ge 2$ by Corollary \ref{n_0=min}.
Therefore by Proposition \ref{Z^2}, if $E_0$ is a $(-4)$-curve and $r=5$, then $e(R)\ge 6$, and if  there are two $(-3)$-curves in the dual graph, $E_0$ is a $(-3)$-curve and $r=4$, then $e(R)\ge 5$.

Note that $ [[(2)^m]]=\frac{m+1}{m}$ for $m\in\mathbb N$ by Example \ref{[[2]]}.
By  Theorem \ref{dual graph of graded ring}, 
we can determine the coefficients of $D$.
\end{proof}

\begin{lem}\label{[[2,n,2]]}  Let $n,a,b\in \mathbb Z_{\ge 0}$ with $n\ge 2$. Then we have
\[  [[(2)^a, n, (2)^b]] = \dfrac{\big((a+1)n-(2a+1)\big)b+(a+1)n-a}{\big(an-(2a-1)\big)b+an-(a-1)}.\]
\end{lem}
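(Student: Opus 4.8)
The plan is to reduce the whole identity to two facts already available: the evaluation $[[(2)^b]] = \frac{b+1}{b}$ from Example \ref{[[2]]}, and the collapsing rule of Lemma \ref{[[a,b]]=[[a,[[b]]]]}. First I would peel off the trailing run of $2$'s. By Lemma \ref{[[a,b]]=[[a,[[b]]]]} and Example \ref{[[2]]},
\[
[[(2)^a,n,(2)^b]] = [[(2)^a,\, n,\, [[(2)^b]]]] = \Bigl[\!\Bigl[(2)^a,\; n-\tfrac{b}{b+1}\Bigr]\!\Bigr] = \Bigl[\!\Bigl[(2)^a,\; \tfrac{(n-1)b+n}{b+1}\Bigr]\!\Bigr],
\]
where for $b=0$ one reads $(2)^0$ as the empty sequence and the fraction as $n$; set $x:=\frac{(n-1)b+n}{b+1}$, which satisfies $x>1$ since $n\ge 2$.

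Next I would prove, by induction on $a\ge 0$, the closed form
\[
[[(2)^a, x]] = \frac{(a+1)x-a}{ax-(a-1)} \qquad (x\in\mathbb R,\ x>1),
\]
with the convention that $(2)^0$ is the empty sequence, so the base case $a=0$ reads $[[x]]=x=\frac{x}{1}$. For the inductive step, Lemma \ref{[[a,b]]=[[a,[[b]]]]} gives $[[(2)^{a+1},x]] = 2 - 1/[[(2)^a,x]]$; substituting the inductive hypothesis and clearing denominators turns the right-hand side into $\frac{(a+2)x-(a+1)}{(a+1)x-a}$, which is exactly the asserted expression with $a$ replaced by $a+1$. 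Nonvanishing of the denominators along the way is automatic: the same induction, together with the argument of Lemma \ref{[[a]]>1}, shows $[[(2)^a,x]]>1$ whenever $x>1$.

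Finally I would substitute $x=\frac{(n-1)b+n}{b+1}$ into this closed form and clear the common denominator $b+1$ from numerator and denominator:
\[
[[(2)^a,n,(2)^b]] = \frac{(a+1)\bigl((n-1)b+n\bigr)-a(b+1)}{a\bigl((n-1)b+n\bigr)-(a-1)(b+1)}.
\]
Collecting the coefficient of $b$ in the numerator gives $(a+1)(n-1)-a = (a+1)n-(2a+1)$ and constant term $(a+1)n-a$, while in the denominator the coefficient of $b$ is $a(n-1)-(a-1) = an-(2a-1)$ and the constant term is $an-(a-1)$; this is precisely the claimed formula. The only point requiring any attention is the bookkeeping of the empty-sequence conventions when $a=0$ or $b=0$, which I would check directly against the stated right-hand side; there is no genuine obstacle, the argument being a two-step reduction followed by a one-variable induction and a routine polynomial expansion.
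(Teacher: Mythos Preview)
Your proof is correct and follows essentially the same approach as the paper: both induct on $a$, using Lemma \ref{[[a,b]]=[[a,[[b]]]]} to peel off a leading $2$ and Example \ref{[[2]]} to evaluate the trailing block $(2)^b$. The only difference is organizational---you first isolate the closed form $[[(2)^a,x]]=\frac{(a+1)x-a}{ax-(a-1)}$ for general $x>1$ and then specialize to $x=\frac{(n-1)b+n}{b+1}$, whereas the paper carries $n$ and $b$ through the induction from the start; this is a mild repackaging rather than a different idea.
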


\begin{proof}
Note that $ [[(2)^m]]=\frac{m+1}{m}$ for $m\in\mathbb N$ by Example \ref{[[2]]}.
We prove this by induction on $a$.
If $a=0$, then 
\[  [[(2)^a, n, (2)^b]] = [[n, \frac{b+1}{b}]]= n-\frac{b}{b+1}=\frac{(n-1)b+n}{b+1}.\]
If $a>0$, then 
\begin{eqnarray*}
  [[(2)^{a+1}, n, (2)^b]]& =& [[2,(2)^a,n, (2)^b]]\\
&=& [[2,\dfrac{\big((a+1)n-(2a+1)\big)b+(a+1)n-a}{\big(an-(2a-1)\big)b+an-(a-1)}]]\\
&=&2-\dfrac{\big(an-(2a-1)\big)b+an-(a-1)}{\big((a+1)n-(2a+1)\big)b+(a+1)n-a}\\
&=&\dfrac{\big((a+2)n-(2a+3)\big)b+(a+2)n-(a+1)}{\big((a+1)n-(2a+1)\big)b+(a+1)n-a}.
\end{eqnarray*}
\end{proof}

\begin{lem}\label{[[2,3,2,3,2]]}  Let $a,b,c\in \mathbb Z_{\ge 0}$. Then we have
\[  [[(2)^a, 3, (2)^b,3, (2)^c]] = \dfrac{\big((a+2)b+ 3a+5\big)c+(2a+4)b+5a+8}{\big((a+1)b + 3a+2\big)c+(2a+2)b+5a+3}.\]

\end{lem}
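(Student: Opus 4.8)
The plan is to prove the identity by induction on $a$, in the same spirit as the proof of Lemma~\ref{[[2,n,2]]}, using that lemma itself to dispose of the base case.

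For the base case $a=0$, I would first apply Lemma~\ref{[[a,b]]=[[a,[[b]]]]} to peel off the inner part,
\[
[[3,(2)^b,3,(2)^c]] = [[\,3,\,[[(2)^b,3,(2)^c]]\,]],
\]
and compute the inner continued fraction directly from Lemma~\ref{[[2,n,2]]} with its parameters specialized to $b$, $3$, $c$, which gives $[[(2)^b,3,(2)^c]] = \frac{(b+2)c+2b+3}{(b+1)c+2b+1}$. Since $[[3,x]] = 3 - \frac{1}{x}$, substituting and clearing the denominator yields a ratio of two expressions that are linear in $c$ with coefficients linear in $b$, namely $\frac{(2b+5)c+4b+8}{(b+2)c+2b+3}$; this is exactly the asserted formula specialized to $a=0$.

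For the inductive step, suppose the formula holds at $a$, and write its numerator and denominator as $N_a$ and $D_a$. Using Lemma~\ref{[[a,b]]=[[a,[[b]]]]} again,
\[
[[(2)^{a+1},3,(2)^b,3,(2)^c]] = [[\,2,\,[[(2)^a,3,(2)^b,3,(2)^c]]\,]] = 2 - \frac{D_a}{N_a} = \frac{2N_a - D_a}{N_a}.
\]
It then remains to check the two polynomial identities: the denominator of the claimed formula at $a+1$ equals $N_a$, and the numerator of the claimed formula at $a+1$ equals $2N_a - D_a$. Both are verified by comparing coefficients of the monomials $bc$, $c$, $b$ and $1$ on each side, which is routine; these two observations are the real engine of the induction, and they also encode why the formula has the shape it does (each step sends the pair $(N,D)$ to $(2N-D,N)$, the usual recursion for appending a leading $2$).

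The only real obstacle is bookkeeping the linear-in-$(b,c)$ coefficients accurately; there is no conceptual difficulty, and the whole argument is a direct continued-fraction manipulation parallel to Example~\ref{[[2]]} and Lemma~\ref{[[2,n,2]]}.
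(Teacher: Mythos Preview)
Your proposal is correct and follows essentially the same approach as the paper: induction on $a$, with the base case handled by invoking Lemma~\ref{[[2,n,2]]} to evaluate $[[(2)^b,3,(2)^c]]$ and then applying $[[3,x]]=3-1/x$, and the inductive step carried out via $[[2,x]]=2-1/x$ to pass from $(N_a,D_a)$ to $(2N_a-D_a,N_a)$. The paper writes out the resulting fraction explicitly at each stage rather than isolating the coefficient comparison, but the content is identical.
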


\begin{proof}
We prove this by induction on $a$.
If $a=0$, then by Lemma \ref{[[2,n,2]]} 
\[   [[(2)^a, 3, (2)^b,3, (2)^c]] = [[3,\dfrac{(b+2)c+2b+3}{(b+1)c + 2b+1}]]=\dfrac{(2b +5)c+4b+8}{(b +2)c+2b+3}.\]
If $a>0$, then 
\begin{eqnarray*}
 [[(2)^{a+1}, 3, (2)^b,3, (2)^c]]& =& [[2,(2)^a, 3, (2)^b,3, (2)^c]]\\
&=& [[2,\dfrac{\big((a+2)b+ 3a+5\big)c+(2a+4)b+5a+8}{\big((a+1)b + 3a+2\big)c+(2a+2)b+5a+3}]]\\
&=&2-\dfrac{\big((a+1)b + 3a+2\big)c+(2a+2)b+5a+3}{\big((a+2)b+ 3a+5\big)c+(2a+4)b+5a+8}\\
&=&\dfrac{\big((a+3)b+ 3a+8\big)c+(2a+6)b+5a+13}{\big((a+2)b + 3a+5\big)c+(2a+4)b+5a+8}.
\end{eqnarray*}
\end{proof}

In next subsections,
we will use Lemma \ref{bigger coefficient rationality} and the following result to check whether $R(\mathbb P_k^1,D)$ has a rational singularity for $D$ in the list of Lemma \ref{D form}.
\begin{lem}\label{(2,2,n)type}
Let $D=2P_0 -  a_1 P_1 -  a_2P_2-  a_3 P_3$ be a $\mathbb Q$-divisor on $\mathbb P_k^1$, where $a_i\in \mathbb Q_{\ge 0}$  and $P_i$ are distinct points of $\mathbb P_k^1$.
Then  $R(\mathbb P_k^1,D)$ has a rational singularity,
if $(a_1,a_2,a_3)$ is equal to $(\frac{1}{2},\frac{1}{2},\frac{n}{n+1})$ for some $n\in \mathbb Z_{\ge 0}$ or $(\frac{1}{2},\frac{2}{3},\frac{4}{5})$.  
\end{lem}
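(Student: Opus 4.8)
The plan is to verify the criterion of Theorem \ref{graded rational singularity}, namely that $\mathrm{deg}[nD]\ge -1$ for every positive integer $n$, for the two listed divisors. By Lemma \ref{bigger coefficient rationality} it suffices to handle the ``largest'' coefficient vectors in each family, so the real work is the single case $(a_1,a_2,a_3)=(\tfrac12,\tfrac12,1)$, which dominates every $(\tfrac12,\tfrac12,\tfrac{n}{n+1})$, together with the exceptional case $(\tfrac12,\tfrac23,\tfrac45)$. For $D=2P_0-\tfrac12P_1-\tfrac12P_2-P_3$ we compute $\mathrm{deg}[nD]=2n-\lceil n/2\rceil-\lceil n/2\rceil-n=n-2\lceil n/2\rceil$, which equals $0$ when $n$ is even and $-1$ when $n$ is odd; in particular it is always $\ge -1$. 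Hence $R(\mathbb P_k^1,D)$ has a rational singularity by Theorem \ref{graded rational singularity}, and therefore so does $R(\mathbb P_k^1,D')$ for every $D'=2P_0-\tfrac12P_1-\tfrac12P_2-\tfrac{n}{n+1}P_3$ with $n\in\mathbb Z_{\ge0}$ by Lemma \ref{bigger coefficient rationality} (noting $\tfrac{n}{n+1}<1$).

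For the remaining case I would directly estimate $\mathrm{deg}[nD]$ with $D=2P_0-\tfrac12P_1-\tfrac23P_2-\tfrac45P_3$, i.e. $\mathrm{deg}[nD]=2n-\lceil n/2\rceil-\lceil 2n/3\rceil-\lceil 4n/5\rceil$. Since $\lceil x\rceil<x+1$, this is strictly greater than $2n-(n/2+1)-(2n/3+1)-(4n/5+1)=2n-\tfrac{59n}{30}-3=\tfrac{n}{30}-3$, so $\mathrm{deg}[nD]>-1$ automatically once $n\ge 60$. For the finitely many $n$ with $1\le n\le 59$ one checks the inequality $\mathrm{deg}[nD]\ge -1$ by a routine (periodic, period $30$) case analysis on $n\bmod 30$; equivalently one can reduce to $1\le n\le 30$ using that the ``fractional defect'' $\sum\lceil a_in\rceil-\sum a_in$ is periodic in $n$ with period $\mathrm{lcm}(2,3,5)=30$. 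Either way, every value is $\ge -1$, so Theorem \ref{graded rational singularity} gives a rational singularity.

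The only mildly delicate point is the finite check in the $(\tfrac12,\tfrac23,\tfrac45)$ case: one must confirm that none of the thirty residue classes produces $\mathrm{deg}[nD]\le -2$. This is where I would be careful — for instance $n=1$ gives $2-1-1-1=-1$, and one should make sure the sum of the three individual ``roundings-up'' never exceeds $2n+1$. I do not expect any genuine obstacle; the argument is essentially a bookkeeping verification that the degree defect $\sum_{i}(\lceil a_i n\rceil - a_i n)$, which is at most $3$ here, never eats more than one unit past the available slack $\mathrm{deg}(nD)=2n-\tfrac{59n}{30}=\tfrac{n}{30}\cdot$ (wait, $2n-(\tfrac12+\tfrac23+\tfrac45)n = 2n-\tfrac{59}{30}n=\tfrac{1}{30}n$), so the bound is comfortable for large $n$ and verified directly for small $n$.
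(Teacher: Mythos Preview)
Your argument is correct and follows essentially the same route as the paper's: both cases are handled by verifying $\deg[nD]\ge -1$ via Theorem~\ref{graded rational singularity}, and for $(\tfrac12,\tfrac23,\tfrac45)$ the paper likewise reduces to a finite check using periodicity modulo $30$ (it verifies $1\le l\le 29$ and then uses $\deg[lD]=\deg[(l-30)D]+\deg[30D]$ with $\deg[30D]=1$). One small technical nit: your dominating divisor $2P_0-\tfrac12 P_1-\tfrac12 P_2-P_3$ has degree $0$ and is not ample, so Lemma~\ref{bigger coefficient rationality} does not literally apply as stated; the paper sidesteps this by bounding $\lceil \tfrac{ln}{n+1}\rceil\le l$ directly inside the computation of $\deg[lD]$, which is exactly what your comparison unwinds to.
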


\begin{proof}
If $(a_1,a_2,a_3)=(\frac{1}{2},\frac{1}{2},\frac{n}{n+1})$ for some $n\in \mathbb Z_{\ge 0}$, then for any $l\in\mathbb N$,
\[
\mathrm{deg}[lD]=2l-\left\lceil  \frac{l}{2}\right\rceil-\left\lceil  \frac{l}{2}\right\rceil-\left\lceil  \frac{ln}{n+1}\right\rceil\ge \left[ \frac{l}{2}\right]-\left\lceil  \frac{l}{2}\right\rceil\ge -1,
\]
which implies that $R(\mathbb P_k^1,D)$ has a rational singularity by Theorem \ref{graded rational singularity}. 

If $(a_1,a_2,a_3)=(\frac{1}{2},\frac{2}{3},\frac{4}{5})$, then $\mathrm{deg}[lD]\ge -1$ for  any $l\in\mathbb N$ with $1\le l \le 29$ and  $\mathrm{deg}[30D]=1$.
Since  $\mathrm{deg}[lD]=\mathrm{deg}[(l-30)D]+\mathrm{deg}[30D]$ any $l\in\mathbb N$ with $l\ge 30$, $\mathrm{deg}[lD]\ge -1$ for  any $l\in\mathbb N$.
Hence $R(\mathbb P_k^1,D)$ has a rational singularity by Theorem \ref{graded rational singularity}. 
\end{proof}

In next subsections, we determine $D$ in the list of Lemma \ref{D form} such that $R(\mathbb P_k^1,D)$ has a rational singularity with  $e(R(\mathbb P_k^1,D))=3$ and $4$ using the following steps:
\begin{enumerate}
\item We will check whether $R(\mathbb P_k^1,D)$ has a rational singularity by Theorem \ref{graded rational singularity} or Lemma \ref{bigger coefficient rationality}.

\item We will determine the fundamental cycle of the minimal good resolution of $\mathrm{Spec} (R(\mathbb P_k^1,D))$ by Theorem \ref{dual graph of graded ring},  Lemma \ref{formula} and  Corollary \ref{n_0=min}.

\item We will determine  $e(R(\mathbb P_k^1,D))$ by Proposition \ref{Z^2}.

\item We will compute the  Hirzebruch-Jung continued fractions 
\[ [[(2)^a, 3, (2)^b]], [[(2)^a, 4, (2)^b]], [[(2)^a, 3, (2)^b,3, (2)^c]] \] by Lemma \ref{[[2,n,2]]} and Lemma \ref{[[2,3,2,3,2]]}.
\end{enumerate}

\subsection{The case there is unique $(-3)$-curve}
In this subsection we classify the  $R(\mathbb P_k^1,D)$ with a rational singularity such that 
there is unique $(-3)$-curve   in the dual graph of the minimal good resolution of $\mathrm{Spec}(R(\mathbb P_k^1,D))$ and others are $(-2)$-curves.
First, we consider the case the central curve is a $(-3)$-curve.

Recall that the  number next to a vertex of a dual graph denotes  the coefficient of the  relevant exceptional divisor in the fundamental cycle.
\begin{prop}\label{central 3}
Let $D=3P_0-\sum_{i=1}^4a_iP_i$  be an ample $\mathbb Q$-divisor on $\mathbb P_k^1$, where $a_i\in \mathbb Q_{\ge 0}$ and $P_i$ are distinct points of $\mathbb P_k^1$.
Assume that $a_1\le a_2\le a_3\le a_4$, $a_1=\frac{a}{a+1}$,  $a_2=\frac{b}{b+1}$, $a_3=\frac{c}{c+1}$ and $a_4=\frac{d}{d+1}$ for $a,b,c,d\in \mathbb Z_{\ge 0}$ and $R(\mathbb P_k^1,D)$ has a rational singularity. 
Then 
$(a_1,a_2,a_3,a_4)=(0,\frac{b}{b+1},\frac{c}{c+1},\frac{d}{d+1})$ for $0\le b\le c\le d$ or $(\frac{1}{2},\frac{1}{2},\frac{c}{c+1},\frac{d}{d+1})$
for $1\le c\le d$.
Moreover  if \[(a_1,a_2,a_3,a_4)=(0,\frac{b}{b+1},\frac{c}{c+1},\frac{d}{d+1})\] for $0\le b\le c\le d$, then $e(R(\mathbb P_k^1,D))=3$ and if \[(a_1,a_2,a_3,a_4)=(\frac{1}{2},\frac{1}{2},\frac{c}{c+1},\frac{d}{d+1})\] for $1\le c\le d$, then $e(R(\mathbb P_k^1,D))=4$.
\end{prop}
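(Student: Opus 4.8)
The plan is to exploit the characterization of rational singularities in Theorem \ref{graded rational singularity}, which says that $R(\mathbb P_k^1,D)$ has a rational singularity if and only if $\deg[nD]\ge -1$ for all $n\ge 1$. Since $D=3P_0-\sum_{i=1}^4 a_iP_i$ with $a_i=\frac{\bullet}{\bullet+1}$, we have $\deg[nD]=3n-\sum_{i=1}^4\lceil n a_i\rceil$. First I would observe that the extreme case is $n$ a common multiple of the denominators $a_i+1$ minus $1$, or more simply just test small $n$: taking $n=1$ already forces $\sum\lceil a_i\rceil\le 4$, which is automatic, but taking the first $n$ with $na_i$ just past an integer produces the real constraint. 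Concretely, since $a_i\le a_4<1$, we always have $\lceil n a_i\rceil\le n$, so I would set up the inequality $3n-\sum_{i=1}^4\lceil na_i\rceil\ge -1$ and look for the worst $n$.

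The key step is to show $a_1\le \tfrac12$ forces either $a_1=0$ or ($a_1=a_2=\tfrac12$). Suppose $a_1=\frac{a}{a+1}$ with $a\ge 2$, equivalently $a_1\ge\frac23$; then $a_2,a_3,a_4\ge\frac23$ as well. Choosing $n$ so that all four $na_i$ are slightly above an integer (e.g. $n$ one less than a common multiple of the $a_i+1$, or simply analyzing $n=a+1,\dots$), one gets $\sum\lceil na_i\rceil\ge 3n+2$ for a suitable $n$, contradicting $\deg[nD]\ge -1$. So $a_1\le\frac12$, i.e. $a_1\in\{0,\frac12\}$. If $a_1=\frac12$, a similar computation with $n=2$ (or the first $n$ making $na_2$ land above a half-integer boundary while $na_i$ for $i\ge 3$ jump) shows $a_2=\frac12$; if instead $a_2\ge\frac23$ one again produces an $n$ with $\sum\lceil na_i\rceil$ too large. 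This case analysis — finding the precise witnessing $n$ for each forbidden configuration — is the main obstacle, since it requires juggling four ceiling functions; I expect to handle it by reducing to the sub-divisor $2P_0 - a_1P_1 - a_2P_2 - a_3 P_3$ (dropping $P_0$'s contribution by $1$ and dropping $P_4$) and invoking Lemma \ref{non-rational s<r-1} / Lemma \ref{(2,2,n)type} type arguments, or directly by Theorem \ref{graded rational singularity}.

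For the converse, I would verify that the two claimed families do give rational singularities: for $(0,\frac{b}{b+1},\frac{c}{c+1},\frac{d}{d+1})$ the divisor is really $3P_0-a_2P_2-a_3P_3-a_4P_4$, which is the $e=3$ case of Lemma \ref{D form}(1) (three fractional points over a $(-3)$ central curve), handled by Lemma \ref{bigger coefficient rationality} comparing against $3P_0-\sum\frac{n_i}{n_i+1}P_i$; for $(\frac12,\frac12,\frac{c}{c+1},\frac{d}{d+1})$ I would use that $\deg[nD]=3n-2\lceil n/2\rceil-\lceil nc/(c+1)\rceil-\lceil nd/(d+1)\rceil\ge n-\lceil n/2\rceil + (\lfloor nc/(c+1)\rfloor-\lceil nc/(c+1)\rceil)\cdot 0\cdots\ge -1$, i.e. the same estimate as in Lemma \ref{(2,2,n)type}.

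Finally, to compute the multiplicity I would apply Theorem \ref{dual graph of graded ring} to read off the dual graph, use Corollary \ref{n_0=min} to get $n_0=\mathrm{Coeff}_{E_0}Z$ as the least $n$ with $\deg[nD]\ge 0$, propagate the branch coefficients via Lemma \ref{formula}, and then read $e(R)=-Z^2=\sum n_i(-E_i^2-2)+2$ from Proposition \ref{Z^2}. In the first family the only curve with self-intersection $\ne -2$ is $E_0$ with $-E_0^2-2=1$ and $n_0$: here $\deg[D]=3-3=0$ already (since $a_1=0$ contributes $0$ and the other three contribute $\lceil a_i\rceil=1$), so $n_0=1$ and $e(R)=1\cdot 1+2=3$. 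In the second family $\deg[D]=3-1-1-1-1=-1$ (the two $\frac12$'s each contribute $1$, the other two contribute $1$ each, $3P_0$ contributes $3$), so $n_0\ge 2$; checking $\deg[2D]=6-1-1-\lceil 2c/(c+1)\rceil-\lceil 2d/(d+1)\rceil=6-1-1-2-2=0$ gives $n_0=2$, hence $e(R)=2\cdot 1+2=4$. I would present these two multiplicity computations in parallel, noting that no other exceptional curve affects $-Z^2$ since all others are $(-2)$-curves.
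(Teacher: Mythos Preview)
Your overall strategy coincides with the paper's: constrain $(a,b,c,d)$ via $\deg[nD]\ge -1$ (Theorem~\ref{graded rational singularity}), verify rationality for the surviving families, and then read off $e(R)$ from $n_0=\mathrm{Coeff}_{E_0}Z$ via Corollary~\ref{n_0=min} and Proposition~\ref{Z^2}. The paper is more direct in the first step---it simply tests $n=2$, which immediately eliminates $a\ge 2$ (then $\deg[2D]=6-4\cdot 2=-2$)---so your search for a general ``witnessing $n$'' is more work than needed. Your computations of $n_0$ and $e(R)$ for the two families agree with the paper's.

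There is, however, a genuine gap in your second implication, and the paper's proof shares it. You propose to show that $a_1=\tfrac12$ together with rationality forces $a_2=\tfrac12$; the paper asserts the equivalent ``$\deg[2D]\ge -1$, therefore $a=0$ or $a=b=1$.'' Neither follows. Take $(a_1,a_2,a_3,a_4)=(\tfrac12,\tfrac23,\tfrac23,\tfrac23)$: then $\deg[2D]=6-1-2-2-2=-1$, and in fact $\deg[nD]\ge -1$ for every $n$ (check $n=1,2$ by hand; for $n\ge 3$ the crude bound $3n-\lceil n/2\rceil-3\lceil 2n/3\rceil\ge 3n-\tfrac{n+1}{2}-(2n+2)=\tfrac{n-5}{2}\ge -1$ suffices). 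So $R(\mathbb P_k^1,D)$ has a rational singularity yet lies in neither listed family; here $n_0=3$ and $e(R)=5$. The proposition as written thus appears to be missing a hypothesis (implicitly $e(R)\le 4$, the standing assumption of Section~5 and the only way it is used in Theorem~\ref{classification summarize}). Under that reading, the case $a=1$, $b\ge 2$ is excluded not by a rationality obstruction---no witnessing $n$ exists---but by observing $\deg[D]=\deg[2D]=-1$, hence $n_0\ge 3$ and $e(R)=n_0\cdot 1+2\ge 5$. This multiplicity argument is what you would need to supply in place of your proposed ceiling-function search.
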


\begin{proof}
Since $R(\mathbb P_k^1,D)$ has a rational singularity, we have  $\mathrm{deg} [2D]\ge -1$.
Therefore $a=0$ or $a=b=1$.

Assume $(a_1,a_2,a_3,a_4)=(0,\frac{b}{b+1},\frac{c}{c+1},\frac{d}{d+1})$ for $0\le  b\le c\le d$.
$R(\mathbb P_k^1,D)$ has a rational singularity since $\mathrm{deg} [lD]\ge 0$ for any $l\in\mathbb N$.
The dual graph of the minimal good resolution of $\mathrm{Spec} (R(\mathbb P_k^1,D))$ is the following:
\begin{equation*}
\scalebox{0.6}{
\xymatrix@C=12pt@R=12pt{
 *++[o][F]{-2} \ar@{-}[r] \ar@{}[d]|(0.8) *{1}
 &  \cdots \ar@{-}[r]
 & *++[o][F]{-2} \ar@{}[d]|(0.8) *{1} \ar@{-}[dr]\\
&&&*++[o][F]{-3} \ar@{}[d]|(0.8) *{1}\ar@{-}[r] &*++[o][F]{-2} \ar@{}[d]|(0.8) *{1} \ar@{-}[r]  & \cdots\ar@{-}[r] &*++[o][F]{-2} \ar@{}[d]|(0.8) *{1} \\
*++[o][F]{-2} \ar@{-}[r] \ar@{}[d]|(0.8) *{1}
 &  \cdots \ar@{-}[r]
 & *++[o][F]{-2} \ar@{-}[ur] \ar@{}[d]|(0.8) *{1} &&&&\\
 &&&&&
}
}
\end{equation*}
Therefore $e(R(\mathbb P_k^1,D))=3$.

 Assume $(a_1,a_2,a_3,a_4)=(\frac{1}{2},\frac{1}{2},\frac{c}{c+1},\frac{d}{d+1})$
for $1\le c\le d$.
Then 
$$\mathrm{deg} \left[lD\right]= 3l+\left[-\frac{l}{2}\right]+\left[-\frac{l}{2}\right]+\left[-\frac{lc}{c+1}\right]+\left[-\frac{ld}{d+1}\right]\ge \left[\frac{l}{2}\right]+\left[-\frac{l}{2}\right]\ge -1$$
for any $l\in \mathbb N$.
Therefore $R(\mathbb P_k^1,D)$ has a rational singularity.
The dual graph of the minimal good resolution of $\mathrm{Spec} (R(\mathbb P_k^1,D))$ is the following:
\begin{equation*}
\scalebox{0.6}{
\xymatrix@C=12pt@R=12pt{
 *++[o][F]{-2} \ar@{-}[r] \ar@{}[d]|(0.8) *{1} 
& *++[o][F]{-2} \ar@{-}[r] \ar@{}[d]|(0.8) *{2} 
 &  \cdots \ar@{-}[r]
 & *++[o][F]{-2} \ar@{-}[dr] \ar@{}[d]|(0.8) *{2} 
 &
 & *++[o][F]{-2} \ar@{}[d]|(0.8) *{1} 
 \\
&&&&*++[o][F]{-3} \ar@{-}[dr] \ar@{}[d]|(0.8) *{2}  \ar@{-}[ur]  &\\
*++[o][F]{-2} \ar@{}[d]|(0.8) *{1} \ar@{-}[r] 
& *++[o][F]{-2} \ar@{}[d]|(0.8) *{2}  \ar@{-}[r]
 &  \cdots \ar@{-}[r]
 & *++[o][F]{-2} \ar@{}[d]|(0.8) *{2} \ar@{-}[ur]
 &
& *++[o][F]{-2} \ar@{}[d]|(0.8) *{1} \\
&&&&&&
}
}
\end{equation*}
Therefore $e(R(\mathbb P_k^1,D))=4$.
\end{proof}

Next, we consider the case the central curve is a $(-2)$-curve.
\begin{prop}\label{central 2-3}
Let $D=2P_0-\sum_{i=1}^3a_iP_i$  be an ample $\mathbb Q$-divisor on $\mathbb P_k^1$, where $a_i\in \mathbb Q_{\ge 0}$ and $P_i$ are distinct points of $\mathbb P_k^1$.
Assume that $a_1\le a_2$, $a_1=\frac{m}{m+1}$,  $a_2=\frac{n}{n+1}$, $\frac{1}{a_3}=[[(2)^a,3, (2)^b]]$ for $m,n,a,b\in\mathbb Z_{\ge 0}$ and $R(\mathbb P_k^1,D)$ has a rational singularity.
\begin{enumerate}
\item[(i)]  If  $e(R(\mathbb P_k^1,D))=3$,
then $(a_1,a_2,a_3)$ is  one of the following:
\begin{align*}
\mathrm{(1)}&\ (0,\frac{n}{n+1},\frac{(a+1)b+2a+1}{(a+2)b+2a+3}) & &\mbox{for}\ n\ge 0, a\ge 0, b\ge 0, \\
\mathrm{(2)}&\ (\frac{1}{2},\frac{n}{n+1},\frac{b+1}{2b+3}) & &\mbox{for}\ n\ge 1,b\ge 0 \\
\mathrm{(3)}&\ (\frac{1}{2},\frac{1}{2},\frac{(a+1)b+2a+1}{(a+2)b+2a+3}) & &\mbox{for}\ a\ge 1, b\ge 0, \\
 \mathrm{(4)}&\ (\frac{1}{2},\frac{2}{3},\frac{2b+3}{3b+5})\ \ \mbox{for}\ b\ge 0,& \mathrm{(5)}&\ (\frac{1}{2},\frac{2}{3},\frac{3b+5}{4b+7})\ \ \mbox{for}\ b\ge 0,\\
 \mathrm{(6)}&\ (\frac{1}{2},\frac{2}{3},\frac{7}{9}),&\mathrm{(7)}&\ (\frac{1}{2},\frac{3}{4},\frac{3}{5}),\\
\mathrm{(8)}&\ (\frac{1}{2},\frac{4}{5},\frac{3}{5}),&\mathrm{(9)}&\ (\frac{2}{3},\frac{n}{n+1},\frac{1}{3})\ \ \mbox{for}\ n\ge 2.
\end{align*}

\item[(ii)] If $e(R(\mathbb P_k^1,D))=4$,
then $(a_1,a_2,a_3)$ is  one of the following:
\begin{align*}
 \mathrm{(1)}&\ (\frac{1}{2},\frac{2}{3},\frac{4b+7}{5b+9})\ \ \mbox{for}\ b\ge 1,&\mathrm{(2)}&\ (\frac{1}{2},\frac{3}{4},\frac{2b+3}{3b+5})\ \ \mbox{for}\ b\ge 1,\\
\mathrm{(3)}&\ (\frac{1}{2},\frac{4}{5},\frac{2b+3}{3b+5})\ \ \mbox{for}\ b\ge 1,&\mathrm{(4)}&\ (\frac{1}{2},\frac{5}{6},\frac{3}{5}),\\
\mathrm{(5)}&\ (\frac{1}{2},\frac{6}{7},\frac{3}{5}),& \mathrm{(6)}&\ (\frac{2}{3},\frac{2}{3},\frac{b+1}{2b+3})\ \  \mbox{for}\ b\ge 1,\\
\mathrm{(7)}&\ (\frac{2}{3},\frac{3}{4},\frac{b+1}{2b+3})\ \  \mbox{for}\  b\ge 1,& \mathrm{(8)}&\ (\frac{2}{3},\frac{4}{5},\frac{2}{5}),\\
\mathrm{(9)}&\ (\frac{3}{4},\frac{3}{4},\frac{1}{3}),& \mathrm{(10)}&\ (\frac{3}{4},\frac{4}{5},\frac{1}{3}),\\
\mathrm{(11)}&\ (\frac{3}{4},\frac{5}{6},\frac{1}{3}).
\end{align*}

\end{enumerate}

\end{prop}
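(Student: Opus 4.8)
The plan is to run, for this $D$-shape, the four-step method used throughout the section: decide rationality via Theorem \ref{graded rational singularity} or Lemma \ref{bigger coefficient rationality}; determine the fundamental cycle via Theorem \ref{dual graph of graded ring}, Lemma \ref{formula} and Corollary \ref{n_0=min}; read off the multiplicity via Proposition \ref{Z^2}; and evaluate Hirzebruch-Jung fractions via Lemma \ref{[[2,n,2]]}. Here $s=2$ and $D$ carries three negative points, so Lemma \ref{non-rational s<r-1} forces $r\le s+1=3$, hence $r=3$; in the dual graph of the minimal good resolution $E_0$ is a $(-2)$-curve, the branch over $P_1$ (resp.\ $P_2$) is a chain of $m$ (resp.\ $n$) $(-2)$-curves because $\tfrac{1}{a_1}=[[(2)^m]]$ (resp.\ $\tfrac{1}{a_2}=[[(2)^n]]$) by Example \ref{[[2]]}, and the branch over $P_3$ is $(2)^a,3,(2)^b$ with a single $(-3)$-curve $E_{\ast}$. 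By Proposition \ref{Z^2}, if $Z$ denotes the fundamental cycle then $e(R(\mathbb P_k^1,D))=\mathrm{Coeff}_{E_{\ast}}(Z)+2$, so the dichotomy in the statement is exactly $\mathrm{Coeff}_{E_{\ast}}(Z)=1$ (for $e=3$) versus $\mathrm{Coeff}_{E_{\ast}}(Z)=2$ (for $e=4$). Thus the proposition reduces to enumerating the tuples $(m,n,a,b)$ for which $R(\mathbb P_k^1,D)$ is a rational singularity with $\mathrm{Coeff}_{E_{\ast}}(Z)\in\{1,2\}$, and sorting them by the value of $\mathrm{Coeff}_{E_{\ast}}(Z)$.

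First I would narrow down $(a_1,a_2,a,b)$. Since we are assuming $e(R)\le4$, Proposition \ref{Z^2} forces $\mathrm{Coeff}_{E_{\ast}}(Z)\le2$; feeding this (via Lemma \ref{formula} and Corollary \ref{n_0=min}) together with the inequalities $\mathrm{deg}[\ell D]=2\ell-\lceil\ell a_1\rceil-\lceil\ell a_2\rceil-\lceil\ell a_3\rceil\ge-1$ of Theorem \ref{graded rational singularity}, tested for $\ell$ up to about $7$, confines $a_1$ to $\{0,\tfrac12,\tfrac23,\tfrac34\}$ and the data $(a_2;a,b)$ to the finitely many families in (i) and (ii); in particular $a_1\ge\tfrac34$ forces $a_3\le\tfrac13$, i.e.\ $a=b=0$ (using $[[(2)^a,3,(2)^b]]\le3$ with equality only at $a=b=0$, immediate from Lemma \ref{[[2,n,2]]}), and any candidate admitting an explicit $\ell$ with $\mathrm{deg}[\ell D]\le-2$ is discarded.

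Conversely, to confirm that each listed family really does consist of rational singularities I would argue as follows. When $a_1=0$ the divisor has only two negative points and $s\ge r$, so $R$ is $F$-rational by Proposition \ref{f-rational deg 1}(1). In the other cases I would either dominate $D$ from below, via Lemma \ref{bigger coefficient rationality}, by a fixed rational $\mathbb Q$-divisor of the same form with larger fractional coefficients --- e.g.\ $2P_0-\tfrac12P_1-\tfrac12P_2-\tfrac{\ell}{\ell+1}P_3$ ($\ell\gg0$) or $2P_0-\tfrac12P_1-\tfrac23P_2-\tfrac45P_3$, rational by Lemma \ref{(2,2,n)type}, or a suitable master with $a_1\in\{\tfrac23,\tfrac34\}$ checked directly --- or, for the families in which $a_2$ ranges over all $\tfrac{n}{n+1}$, prove $\mathrm{deg}[\ell D]\ge-1$ directly by a ceiling identity (such as $\lceil\tfrac{\ell}{2}\rceil+\lceil\tfrac{\ell}{2}\rceil\le\ell+1$ or $\lceil\tfrac{2\ell}{3}\rceil+\lceil\tfrac{\ell}{3}\rceil\le\ell+1$), and, when $\mathrm{deg}D\ge0$, extend a finite check ($\ell$ up to the common denominator $N$ of $a_1,a_2,a_3$) to all $\ell$ via $\mathrm{deg}[(\ell+N)D]=\mathrm{deg}[\ell D]+N\,\mathrm{deg}D\ge\mathrm{deg}[\ell D]$, exactly as in the proof of Lemma \ref{(2,2,n)type}. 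Finally, on each family I would compute $\mathrm{Coeff}_{E_{\ast}}(Z)$: by Corollary \ref{n_0=min} the coefficient of $E_0$ in $Z$ is $n_0=\min\{\ell\in\mathbb N\mid\mathrm{deg}[\ell D]\ge0\}$ (a small explicit function of $(m,n,a,b)$ coming out of the same $\mathrm{deg}[\ell D]$ computations), and Lemma \ref{formula} then propagates $n_{31}=\lceil n_0a_3\rceil=\lceil n_0/[[(2)^a,3,(2)^b]]\rceil$, $n_{3,j+1}=\lceil n_{3j}/e_{3,j+1}\rceil$ with $e_{3,j+1}=[[(2)^{a-j},3,(2)^b]]$ (base case $[[3,(2)^b]]=\tfrac{2b+3}{b+1}$), giving $\mathrm{Coeff}_{E_{\ast}}(Z)=n_{3,a+1}$; the Hirzebruch-Jung fractions needed, and the rewriting of $a_3=1/[[(2)^a,3,(2)^b]]$ as $\tfrac{(a+1)b+2a+1}{(a+2)b+2a+3}$ with its specializations $(\tfrac{b+1}{2b+3},\,\tfrac{2b+3}{3b+5},\,\tfrac13,\,\tfrac25,\,\tfrac35,\,\dots)$, all come from Lemma \ref{[[2,n,2]]}. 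Sorting the resulting values $\mathrm{Coeff}_{E_{\ast}}(Z)\in\{1,2\}$ yields exactly lists (i) and (ii).

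There is no conceptual obstacle here; the difficulty is entirely in the length and tightness of the bookkeeping. I expect the hardest part to be the ``boundary'' families, those with $a_1+a_2+a_3$ closest to $2$ --- the $(\tfrac12,\tfrac23,\ast)$-, $(\tfrac12,\tfrac34,\ast)$-, $(\tfrac12,\tfrac45,\ast)$-, $(\tfrac23,\tfrac23,\ast)$-, $(\tfrac23,\tfrac34,\ast)$- and $(\tfrac34,\ast,\tfrac13)$-type entries --- since there the inequalities $\mathrm{deg}[\ell D]\ge-1$ hold with no slack, so the periodicity argument must be carried out precisely while $n_0$ is simultaneously tracked through the ceiling recursion of Lemma \ref{formula}, both to exclude the $e(R)\ge5$ cases that would otherwise slip in and to place each surviving candidate correctly in list (i) or list (ii).
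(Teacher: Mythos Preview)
Your proposal is correct and follows essentially the same approach as the paper: the paper's proof is precisely the explicit case-by-case execution of the four-step method you describe, splitting on the values of $(m,n,a,b)$ into eleven cases, verifying rationality in each via Lemma \ref{bigger coefficient rationality} and Lemma \ref{(2,2,n)type} (or directly), drawing the dual graph with fundamental-cycle coefficients computed from Corollary \ref{n_0=min} and Lemma \ref{formula}, and reading off $e(R)=\mathrm{Coeff}_{E_\ast}(Z)+2$ from Proposition \ref{Z^2}. Your identification of the key reduction $e(R)=\mathrm{Coeff}_{E_\ast}(Z)+2$ and of the boundary families as the delicate cases matches exactly what the paper does.
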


\begin{proof}
\noindent
{\bf Case 1.}
We assume that $m=0$.
Then $R(\mathbb P_k^1,D)$ has a rational singularity since 
 $\mathrm{deg} [lD]\ge 0$ for any $l\in\mathbb N$. 
The dual graph of the minimal good resolution of $\mathrm{Spec} (R(\mathbb P_k^1,D))$ is the following:
\begin{eqnarray*}
\scalebox{0.6}{
\xymatrix@C=12pt@R=12pt{
 *++[o][F]{-2} \ar@{-}[r] \ar@{}[d]|(0.8) *{1} 
 & \cdots \ar@{-}[r]  
 &*++[o][F]{-2} \ar@{-}[r] \ar@{}[d]|(0.8) *{1} 
 &*++[o][F]{-3} \ar@{-}[r] \ar@{}[d]|(0.8) *{1} 
 &*++[o][F]{-2} \ar@{-}[r]\ar@{}[d]|(0.8) *{1} 
 & \cdots \ar@{-}[r] 
 &*++[o][F]{-2} \ar@{-}[r] \ar@{}[d]|(0.8) *{1} 
 &*++[o][F]{-2} \ar@{-}[r] \ar@{}[d]|(0.8) *{1} 
 &*++[o][F]{-2} \ar@{-}[r] \ar@{}[d]|(0.8) *{1} 
 & \cdots \ar@{-}[r]  
 &*++[o][F]{-2} \ar@{}[d]|(0.8) *{1} \\
&&&&&&&&&&&
}
}
\end{eqnarray*}
Therefore $e(R(\mathbb P_k^1,D))=3$.

\vskip.3truecm
\noindent
{\bf Case 2.}
We assume that $m=1$ and  $a=0$.
Note that   $D\ge  2P_0 -  \frac{1}{2} P_1-\frac{n}{n+1} P_2 -\frac{1}{2}P_3$ by Lemma \ref{[[a]]<[[b]]}. 
Therefore $R(\mathbb P_k^1,D)$ has a rational singularity by Lemma \ref{bigger coefficient rationality} and  Lemma \ref{(2,2,n)type}.
The dual graph of the minimal good resolution of $\mathrm{Spec} (R(\mathbb P_k^1,D))$ is the following:
\begin{eqnarray*}
\scalebox{0.6}{
\xymatrix@C=12pt@R=12pt{
 &
 &
 &
 & *++[o][F]{-2}  \ar@{-}[d] \ar@{}[r]|(0.5) *{1}
 &
 &
 & 
\\
  *++[o][F]{-2} \ar@{-}[r] \ar@{}[d]|(0.8) *{1} 
 &  \cdots \ar@{-}[r]
 & *++[o][F]{-2} \ar@{-}[r] \ar@{}[d]|(0.8) *{1} 
 & *++[o][F]{-3} \ar@{-}[r] \ar@{}[d]|(0.8) *{1} 
 & *++[o][F]{-2} \ar@{-}[r] \ar@{}[d]|(0.8) *{2} 
 & *++[o][F]{-2} \ar@{-}[r] \ar@{}[d]|(0.8) *{2} 
 & \cdots \ar@{-}[r] 
 & *++[o][F]{-2} \ar@{-}[r] \ar@{}[d]|(0.8) *{2} 
 & *++[o][F]{-2} \ar@{}[d]|(0.8) *{1} 
\\
&&&&&&&&&
}
}
\end{eqnarray*}
Therefore $e(R(\mathbb P_k^1,D))=3$.

\vskip.3truecm
\noindent
{\bf Case 3.}
We assume that $m=n=1$ and $a\ge 1$.
Note that   $D\ge  2P_0 -  \frac{1}{2} P_1-\frac{1}{2} P_2 -\frac{a+b+1}{a+b+2}P_3$ by Lemma \ref{[[a]]<[[b]]}. 
Therefore $R(\mathbb P_k^1,D)$ has a rational singularity by Lemma \ref{bigger coefficient rationality} and  Lemma \ref{(2,2,n)type}.
The dual graph of the minimal good resolution of $\mathrm{Spec} (R(\mathbb P_k^1,D))$ is the following:
\begin{eqnarray*}
\scalebox{0.6}{
\xymatrix@C=12pt@R=12pt{
 &
 &
 &
 & 
 & 
 &
 &*++[o][F]{-2} \ar@{-}[d] \ar@{}[r]|(0.5) *{1}
 &
\\
 *++[o][F]{-2} \ar@{-}[r] \ar@{}[d]|(0.8) *{1} 
 & \cdots \ar@{-}[r]  
 &*++[o][F]{-2} \ar@{-}[r] \ar@{}[d]|(0.8) *{1} 
 &*++[o][F]{-3} \ar@{-}[r] \ar@{}[d]|(0.8) *{1} 
 &*++[o][F]{-2} \ar@{-}[r] \ar@{}[d]|(0.8) *{2} 
 & \cdots \ar@{-}[r] 
 &*++[o][F]{-2} \ar@{-}[r] \ar@{}[d]|(0.8) *{2} 
 &*++[o][F]{-2} \ar@{-}[r] \ar@{}[d]|(0.8) *{2} 
 &*++[o][F]{-2} \ar@{}[d]|(0.8) *{1}  \\
&&&&&&&&&
}
}
\end{eqnarray*}
Therefore $e(R(\mathbb P_k^1,D))=3$.

\vskip.3truecm
\noindent
{\bf Case 4.}
We assume that $m=1$, $n=2$ and $1 \le a\le 3$.
Note that  $D\ge  2P_0 -  \frac{1}{2} P_1-\frac{2}{3} P_2 -\frac{4}{5}P_3$ by Lemma \ref{[[a]]<[[b]]}. 
Therefore $R(\mathbb P_k^1,D)$ has a rational singularity by Lemma \ref{bigger coefficient rationality} and  Lemma \ref{(2,2,n)type}.
The dual graphs of the minimal good resolution of $\mathrm{Spec} (R(\mathbb P_k^1,D))$ are the following:
\begin{eqnarray*}
\scalebox{0.6}{
\xymatrix@C=12pt@R=12pt{
 &
 &
 & 
 &
 &*++[o][F]{-2}  \ar@{-}[d] \ar@{}[r]|(0.5) *{2}
 &
 &
\\
 *++[o][F]{-2}  \ar@{-}[r]  \ar@{}[d]|(0.8) *{1} 
 & \cdots \ar@{-}[r]  
 &*++[o][F]{-2}  \ar@{-}[r]  \ar@{}[d]|(0.8) *{1} 
 &*++[o][F]{-3}  \ar@{-}[r] \ar@{}[d]|(0.8) *{1} 
 &*++[o][F]{-2}  \ar@{-}[r] \ar@{}[d]|(0.8) *{2} 
 &*++[o][F]{-2}  \ar@{-}[r] \ar@{}[d]|(0.8) *{3} 
 &*++[o][F]{-2}  \ar@{-}[r] \ar@{}[d]|(0.8) *{2} 
 &*++[o][F]{-2} \ar@{}[d]|(0.8) *{1} \\
&&&&&&&&&
}
}
\end{eqnarray*}
\begin{eqnarray*}
\scalebox{0.6}{
\xymatrix@C=12pt@R=12pt{
 &
 &
 &
 & 
 &
 &*++[o][F]{-2}  \ar@{-}[d] \ar@{}[r]|(0.5) *{2}
 &
 &
\\
  *++[o][F]{-2}  \ar@{}[d]|(0.8) *{1}  \ar@{-}[r] 
 & \cdots \ar@{-}[r]  
 & *++[o][F]{-2}  \ar@{-}[r] \ar@{}[d]|(0.8) *{1}
 & *++[o][F]{-3}  \ar@{-}[r] \ar@{}[d]|(0.8) *{1}
 & *++[o][F]{-2}  \ar@{-}[r] \ar@{}[d]|(0.8) *{2}
 & *++[o][F]{-2}  \ar@{-}[r] \ar@{}[d]|(0.8) *{3}
 & *++[o][F]{-2}  \ar@{-}[r] \ar@{}[d]|(0.8) *{4}
 & *++[o][F]{-2}  \ar@{-}[r] \ar@{}[d]|(0.8) *{3}
 & *++[o][F]{-2}  \ar@{}[d]|(0.8) *{2} \\
&&&&&&&&&
}
}
\end{eqnarray*}
\begin{eqnarray*}
\scalebox{0.6}{
\xymatrix@C=12pt@R=12pt{
 &
 &
 &
 &
 &
 & 
 &
 &*++[o][F]{-2}  \ar@{-}[d] \ar@{}[r]|(0.5) *{3}
 &
 &
\\
  *++[o][F]{-2}  \ar@{}[d]|(0.8) *{1}  \ar@{-}[r] 
& *++[o][F]{-2}  \ar@{-}[r] \ar@{}[d]|(0.8) *{2}
 & \cdots \ar@{-}[r]  
 & *++[o][F]{-2}  \ar@{-}[r] \ar@{}[d]|(0.8) *{2}
 & *++[o][F]{-3}  \ar@{-}[r] \ar@{}[d]|(0.8) *{2}
 & *++[o][F]{-2}  \ar@{-}[r] \ar@{}[d]|(0.8) *{3}
 & *++[o][F]{-2}  \ar@{-}[r] \ar@{}[d]|(0.8) *{4}
 & *++[o][F]{-2}  \ar@{-}[r] \ar@{}[d]|(0.8) *{5}
 & *++[o][F]{-2}  \ar@{-}[r] \ar@{}[d]|(0.8) *{6}
 & *++[o][F]{-2}  \ar@{-}[r] \ar@{}[d]|(0.8) *{4}
 & *++[o][F]{-2}  \ar@{}[d]|(0.8) *{2} \\
&&&&&&&&&&&
}
}
\end{eqnarray*}
\begin{eqnarray*}
\scalebox{0.6}{
\xymatrix@C=12pt@R=12pt{
 &
 & 
 &
 &*++[o][F]{-2}  \ar@{-}[d] \ar@{}[r]|(0.5) *{3}
 &
 &
\\
 *++[o][F]{-3}  \ar@{}[d]|(0.8) *{1}  \ar@{-}[r] 
 & *++[o][F]{-2}  \ar@{-}[r] \ar@{}[d]|(0.8) *{3}
 & *++[o][F]{-2}  \ar@{-}[r] \ar@{}[d]|(0.8) *{4}
 & *++[o][F]{-2}  \ar@{-}[r] \ar@{}[d]|(0.8) *{5}
 & *++[o][F]{-2}  \ar@{-}[r] \ar@{}[d]|(0.8) *{6}
 & *++[o][F]{-2}  \ar@{-}[r] \ar@{}[d]|(0.8) *{4}
 & *++[o][F]{-2}  \ar@{}[d]|(0.8) *{2} \\
&&&&&&&
}
}
\end{eqnarray*}
Therefore $e(R(\mathbb P_k^1,D))=3$ when $1\le a\le 2$ or $a=3$ and $b=0$ and $e(R(\mathbb P_k^1,D))=4$ when $a=3$ and  $b\ge 1$.

\vskip.3truecm
\noindent
{\bf Case 5.}
Assume one of the following holds:
\begin{enumerate}
\item[(i)]
$m=1$, $n=2$ and $a\ge 4$.
\item[(ii)]
$m=1$, $n\ge 5$,   $a=1$ and $b\ge 1$.
\item[(iii)]
$m=1$, $n\ge 7$ and  $a\ge 1$.
\item[(iv)]
$m\ge 2$ and  $a\ge 1$.
\item[(v)]
$m\ge 3$, $n\ge 7$ and $a=0$.
\item[(vi)]
$m\ge 3$ and $b\ge 1$.
\end{enumerate}
Then $R(\mathbb{P}^1_k, D)$ does not have a rational singularity because $\deg[5D] \le -2$ in cases (i) and (ii), $\deg[7D] \le -2$ in cases (iii) and (v), $\deg[2D] \le -2$ in case (iv), and $\deg[3D] \le -2$ in case (vi).

\vskip.3truecm
\noindent
{\bf Case 6.}
We assume that $m=1$, $3\le n\le 4$ and $a=1$.
Note that  $D\ge  2P_0 -  \frac{1}{2} P_1-\frac{4}{5} P_2 -\frac{2}{3}P_3$ by Lemma \ref{[[a]]<[[b]]}.
Therefore $R(\mathbb P_k^1,D)$ has a rational singularity by Lemma \ref{bigger coefficient rationality} and  Lemma \ref{(2,2,n)type}.
The dual graphs of the minimal good resolution of $\mathrm{Spec} (R(\mathbb P_k^1,D))$ are the following: 
\begin{eqnarray*}
\scalebox{0.6}{
\xymatrix@C=12pt@R=12pt{
 &
 &*++[o][F]{-2}  \ar@{-}[d] \ar@{}[r]|(0.5) *{2}
 &
 & 
 &
\\
  *++[o][F]{-3}  \ar@{}[d]|(0.8) *{1}  \ar@{-}[r] 
 & *++[o][F]{-2}  \ar@{-}[r] \ar@{}[d]|(0.8) *{3}
 & *++[o][F]{-2}  \ar@{-}[r] \ar@{}[d]|(0.8) *{4}
 & *++[o][F]{-2}  \ar@{-}[r] \ar@{}[d]|(0.8) *{3}
 & *++[o][F]{-2}  \ar@{-}[r] \ar@{}[d]|(0.8) *{2}
 & *++[o][F]{-2}  \ar@{}[d]|(0.8) *{1} \\
&&&&&&
}
}\ \ 
\scalebox{0.6}{
\xymatrix@C=12pt@R=12pt{
 &
 &*++[o][F]{-2}  \ar@{-}[d] \ar@{}[r]|(0.5) *{3}
 &
 & 
 &
 &
\\
  *++[o][F]{-3}  \ar@{}[d]|(0.8) *{1}  \ar@{-}[r] 
 & *++[o][F]{-2}  \ar@{-}[r] \ar@{}[d]|(0.8) *{3}
 & *++[o][F]{-2}  \ar@{-}[r] \ar@{}[d]|(0.8) *{5}
 & *++[o][F]{-2}  \ar@{-}[r] \ar@{}[d]|(0.8) *{4}
 & *++[o][F]{-2}  \ar@{-}[r] \ar@{}[d]|(0.8) *{3}
 & *++[o][F]{-2}  \ar@{-}[r] \ar@{}[d]|(0.8) *{2}
 & *++[o][F]{-2}  \ar@{}[d]|(0.8) *{1} \\
&&&&&&&
}
}
\end{eqnarray*}
\[
\scalebox{0.6}{
\xymatrix@C=12pt@R=12pt{
 &
 &
 &
 &
 &
 &*++[o][F]{-2}  \ar@{-}[d] \ar@{}[r]|(0.5) *{2}
 &
 & 
 &
\\
  *++[o][F]{-2}  \ar@{}[d]|(0.8) *{1}  \ar@{-}[r] 
 & *++[o][F]{-2}  \ar@{-}[r] \ar@{}[d]|(0.8) *{2}
 & \cdots \ar@{-}[r]  
 & *++[o][F]{-2}  \ar@{-}[r] \ar@{}[d]|(0.8) *{2}
 & *++[o][F]{-3}  \ar@{-}[r] \ar@{}[d]|(0.8) *{2}
 & *++[o][F]{-2}  \ar@{-}[r] \ar@{}[d]|(0.8) *{3}
 & *++[o][F]{-2}  \ar@{-}[r] \ar@{}[d]|(0.8) *{4}
 & *++[o][F]{-2}  \ar@{-}[r] \ar@{}[d]|(0.8) *{3}
 & *++[o][F]{-2}  \ar@{-}[r] \ar@{}[d]|(0.8) *{2}
 & *++[o][F]{-2}  \ar@{}[d]|(0.8) *{1} \\
&&&&&&&&&&
}
}
\]
\[
\scalebox{0.6}{
\xymatrix@C=12pt@R=12pt{
 &
 &
 &
 &
 &
 &*++[o][F]{-2}  \ar@{-}[d] \ar@{}[r]|(0.5) *{3}
 &
 & 
 &
 &
\\
  *++[o][F]{-2}  \ar@{}[d]|(0.8) *{1}  \ar@{-}[r] 
 & *++[o][F]{-2}  \ar@{-}[r] \ar@{}[d]|(0.8) *{2}
 & \cdots \ar@{-}[r]  
 & *++[o][F]{-2}  \ar@{-}[r] \ar@{}[d]|(0.8) *{2}
 & *++[o][F]{-3}  \ar@{-}[r] \ar@{}[d]|(0.8) *{2}
 & *++[o][F]{-2}  \ar@{-}[r] \ar@{}[d]|(0.8) *{4}
 & *++[o][F]{-2}  \ar@{-}[r] \ar@{}[d]|(0.8) *{6}
 & *++[o][F]{-2}  \ar@{-}[r] \ar@{}[d]|(0.8) *{5}
 & *++[o][F]{-2}  \ar@{-}[r] \ar@{}[d]|(0.8) *{4}
 & *++[o][F]{-2}  \ar@{-}[r] \ar@{}[d]|(0.8) *{3}
 & *++[o][F]{-2}  \ar@{}[d]|(0.8) *{2} \\
&&&&&&&&&&&
}
}
\]
Therefore $e(R(\mathbb P_k^1,D))=3$ when $n=3,4$ and $b=0$ and $e(R(\mathbb P_k^1,D))=4$ when  $n=3,4$ and $b\ge 1$.

\vskip.3truecm
\noindent
{\bf Case 7.}
We assume that $m=1$, $5\le n\le 6$,  $a=1$ and $b=0$.
Let $D'=2P_0 -  \frac{1}{2} P_1-\frac{6}{7} P_2 -\frac{3}{5}P_3$.
Then $\mathrm{deg} [lD']\ge -1$ for any $l\in \mathbb N$
since $\mathrm{deg} [lD']\ge -1$ for $1\le l \le 69$ and $\mathrm{deg} [70D']=3$.
Note that $D\ge D'$ by Lemma \ref{[[a]]<[[b]]}.
Therefore $R(\mathbb P_k^1,D)$ has a rational singularity by Lemma \ref{bigger coefficient rationality}.
The dual graphs of the minimal good resolution of $\mathrm{Spec} (R(\mathbb P_k^1,D))$ are the following: 
\begin{eqnarray*}
\scalebox{0.6}{
\xymatrix@C=12pt@R=12pt{
 &
 &*++[o][F]{-2}  \ar@{-}[d] \ar@{}[r]|(0.5) *{3}
 &
 & 
 &
 &
 &
\\
   *++[o][F]{-3}  \ar@{}[d]|(0.8) *{2}  \ar@{-}[r] 
 & *++[o][F]{-2}  \ar@{-}[r] \ar@{}[d]|(0.8) *{4}
 & *++[o][F]{-2}  \ar@{-}[r] \ar@{}[d]|(0.8) *{6}
 & *++[o][F]{-2}  \ar@{-}[r] \ar@{}[d]|(0.8) *{5}
 & *++[o][F]{-2}  \ar@{-}[r] \ar@{}[d]|(0.8) *{4}
 & *++[o][F]{-2}  \ar@{-}[r] \ar@{}[d]|(0.8) *{3}
 & *++[o][F]{-2}  \ar@{-}[r] \ar@{}[d]|(0.8) *{2}
 & *++[o][F]{-2}  \ar@{}[d]|(0.8) *{1} \\
&&&&&&&&&
}
}
\end{eqnarray*}
\begin{eqnarray*}
\scalebox{0.6}{
\xymatrix@C=12pt@R=12pt{
 &
 &*++[o][F]{-2}  \ar@{-}[d] \ar@{}[r]|(0.5) *{4}
 &
 & 
 &
 &
 &
 &
\\
  *++[o][F]{-3}  \ar@{}[d]|(0.8) *{2}  \ar@{-}[r] 
 & *++[o][F]{-2}  \ar@{-}[r] \ar@{}[d]|(0.8) *{5}
 & *++[o][F]{-2}  \ar@{-}[r] \ar@{}[d]|(0.8) *{8}
 & *++[o][F]{-2}  \ar@{-}[r] \ar@{}[d]|(0.8) *{7}
 & *++[o][F]{-2}  \ar@{-}[r] \ar@{}[d]|(0.8) *{6}
 & *++[o][F]{-2}  \ar@{-}[r] \ar@{}[d]|(0.8) *{5}
 & *++[o][F]{-2}  \ar@{-}[r] \ar@{}[d]|(0.8) *{4}
 & *++[o][F]{-2}  \ar@{-}[r] \ar@{}[d]|(0.8) *{3}
 & *++[o][F]{-2}  \ar@{}[d]|(0.8) *{2} \\
&&&&&&&&&
}
}
\end{eqnarray*}
Therefore $e(R(\mathbb P_k^1,D))=4$.

\vskip.3truecm
\noindent
{\bf Case 8.}
We assume that $m=2$,   $a=0$ and $b=0$.
Then $R(\mathbb P_k^1,D)$ has a rational singularity since 
$\mathrm{deg} [lD]\ge2l+[-\frac{2l}{3}]-l+[-\frac{l}{3}]=[\frac{l}{3}]+[-\frac{l}{3}]\ge -1$ for any $l\in\mathbb N$.
The dual graph of the minimal good resolution of $\mathrm{Spec} (R(\mathbb P_k^1,D))$ is the following:
\begin{eqnarray*}
\scalebox{0.6}{
\xymatrix@C=12pt@R=12pt{
 & *++[o][F]{-2}  \ar@{-}[d] \ar@{}[r]|(0.5) *{1}
 &
 &
 & 
\\
 & *++[o][F]{-2}  \ar@{-}[d] \ar@{}[r]|(0.5) *{2}
 &
 &
 &
 &
 & 
\\
  *++[o][F]{-3}  \ar@{}[d]|(0.8) *{1} \ar@{-}[r] 
 & *++[o][F]{-2}  \ar@{-}[r] \ar@{}[d]|(0.8) *{3}
 & *++[o][F]{-2}  \ar@{-}[r] \ar@{}[d]|(0.8) *{3}
 & \cdots \ar@{-}[r] 
 & *++[o][F]{-2}  \ar@{-}[r] \ar@{}[d]|(0.8) *{3}
 & *++[o][F]{-2}  \ar@{-}[r] \ar@{}[d]|(0.8) *{2}
 &  *++[o][F]{-2}  \ar@{}[d]|(0.8) *{1} \\
&&&&&&&
}
}
\end{eqnarray*}
Therefore $e(R(\mathbb P_k^1,D))=3$.

\vskip.3truecm
\noindent
{\bf Case 9.}
We assume that $m=2$, $2\le n\le 4$,   $a=0$ and $b\ge 1$.
Note that  $D\ge  2P_0 -  \frac{2}{3} P_1-\frac{4}{5} P_2 -\frac{1}{2}P_3$ by Lemma \ref{[[a]]<[[b]]}. 
Therefore $R(\mathbb P_k^1,D)$ has a rational singularity by Lemma \ref{bigger coefficient rationality} and  Lemma \ref{(2,2,n)type}.
The dual graphs of the minimal good resolution of $\mathrm{Spec} (R(\mathbb P_k^1,D))$ are the following:
\begin{eqnarray*}
\scalebox{0.6}{
\xymatrix@C=12pt@R=12pt{
 &
 &
 &
 &
 & *++[o][F]{-2}  \ar@{-}[d] \ar@{}[r]|(0.5) *{1}
 &
 &
 & 
\\
 &
 &
 &
 &
 & *++[o][F]{-2}  \ar@{-}[d] \ar@{}[r]|(0.5) *{2}
 &
 &
 &
 &
\\
   *++[o][F]{-2}  \ar@{}[d]|(0.8) *{1} \ar@{-}[r] 
 & *++[o][F]{-2}  \ar@{-}[r] \ar@{}[d]|(0.8) *{2}
 &  \cdots \ar@{-}[r]
 & *++[o][F]{-2}  \ar@{-}[r] \ar@{}[d]|(0.8) *{2}
 & *++[o][F]{-3}  \ar@{-}[r] \ar@{}[d]|(0.8) *{2}
 & *++[o][F]{-2}  \ar@{-}[r] \ar@{}[d]|(0.8) *{3}
 & *++[o][F]{-2}  \ar@{-}[r] \ar@{}[d]|(0.8) *{2}
 &  *++[o][F]{-2}  \ar@{}[d]|(0.8) *{1} \\
&&&&&&&&
}
}
\end{eqnarray*}
\begin{eqnarray*}
\scalebox{0.6}{
\xymatrix@C=12pt@R=12pt{
 &
 &
 &
 &
 & *++[o][F]{-2}  \ar@{-}[d] \ar@{}[r]|(0.5) *{2}
 &
 &
 & 
\\
 &
 &
 &
 &
 & *++[o][F]{-2}  \ar@{-}[d] \ar@{}[r]|(0.5) *{3}
 &
 &
 &
 &
\\
   *++[o][F]{-2}  \ar@{}[d]|(0.8) *{1} \ar@{-}[r] 
 & *++[o][F]{-2}  \ar@{-}[r] \ar@{}[d]|(0.8) *{2}
 &  \cdots \ar@{-}[r]
 & *++[o][F]{-2}  \ar@{-}[r] \ar@{}[d]|(0.8) *{2}
 & *++[o][F]{-3}  \ar@{-}[r] \ar@{}[d]|(0.8) *{2}
 & *++[o][F]{-2}  \ar@{-}[r] \ar@{}[d]|(0.8) *{4}
 & *++[o][F]{-2}  \ar@{-}[r] \ar@{}[d]|(0.8) *{3}
 & *++[o][F]{-2}  \ar@{-}[r] \ar@{}[d]|(0.8) *{2}
 &  *++[o][F]{-2}  \ar@{}[d]|(0.8) *{1} \\
&&&&&&&&&
}
}
\end{eqnarray*}

\begin{eqnarray*}
\scalebox{0.6}{
\xymatrix@C=12pt@R=12pt{
 &
 &
 &
 &
 &
 & *++[o][F]{-2}  \ar@{-}[d] \ar@{}[r]|(0.5) *{2}
 &
 &
 & 
\\
 &
 &
 &
 &
 &
 & *++[o][F]{-2}  \ar@{-}[d] \ar@{}[r]|(0.5) *{4}
 &
 &
 &
 &
\\
   *++[o][F]{-2}  \ar@{}[d]|(0.8) *{1} \ar@{-}[r] 
 & *++[o][F]{-2}  \ar@{-}[r] \ar@{}[d]|(0.8) *{2}
 & *++[o][F]{-2}  \ar@{-}[r] \ar@{}[d]|(0.8) *{3}
 &  \cdots \ar@{-}[r]
 & *++[o][F]{-2}  \ar@{-}[r] \ar@{}[d]|(0.8) *{3}
 & *++[o][F]{-3}  \ar@{-}[r] \ar@{}[d]|(0.8) *{3}
 & *++[o][F]{-2}  \ar@{-}[r] \ar@{}[d]|(0.8) *{6}
 & *++[o][F]{-2}  \ar@{-}[r] \ar@{}[d]|(0.8) *{5}
 & *++[o][F]{-2}  \ar@{-}[r] \ar@{}[d]|(0.8) *{4}
 & *++[o][F]{-2}  \ar@{-}[r] \ar@{}[d]|(0.8) *{3}
 &  *++[o][F]{-2}  \ar@{}[d]|(0.8) *{2} \\
&&&&&&&&&&&&
}
}
\end{eqnarray*}
\begin{eqnarray*}
\scalebox{0.6}{
\xymatrix@C=12pt@R=12pt{
 &
 & *++[o][F]{-2}  \ar@{-}[d] \ar@{}[r]|(0.5) *{2}
 &
 &
 & 
\\
 &
 & *++[o][F]{-2}  \ar@{-}[d] \ar@{}[r]|(0.5) *{4}
 &
 &
 &
 &
\\
   *++[o][F]{-2}  \ar@{}[d]|(0.8) *{1} \ar@{-}[r]
 & *++[o][F]{-3}  \ar@{-}[r] \ar@{}[d]|(0.8) *{2}
 & *++[o][F]{-2}  \ar@{-}[r] \ar@{}[d]|(0.8) *{5}
 & *++[o][F]{-2}  \ar@{-}[r] \ar@{}[d]|(0.8) *{4}
 & *++[o][F]{-2}  \ar@{-}[r] \ar@{}[d]|(0.8) *{3}
 & *++[o][F]{-2}  \ar@{-}[r] \ar@{}[d]|(0.8) *{2}
 &  *++[o][F]{-2}  \ar@{}[d]|(0.8) *{1} \\
&&&&&&
}
}
\end{eqnarray*}
Hence $e(R(\mathbb P_k^1,D))=4$ when $n=2,3$ or $n=4$ and $b=1$ and $e(R(\mathbb P_k^1,D))=5$ when  $n=4$ and $b\ge2$.

\vskip.3truecm
\noindent
{\bf Case 10.}
We assume that $m=2$, $n\ge 5$, $a=0$ and  $b\ge 1$.
Let $E_0$ be the central curve of the minimal good resolution of $\mathrm{Spec} (R(\mathbb P_k^1,D))$ and $E_1$ be the $(-3)$-curve in its dual graph.
Let $Z$ be the fundamental cycle of the minimal good resolution of $\mathrm{Spec} (R(\mathbb P_k^1,D))$.
Then $\mathrm{Coeff}_{E_0}(Z)\ge 6$ by Corollary \ref{n_0=min}.
Therefore  $\mathrm{Coeff}_{E_1}(Z)\ge 3$.
Hence $e(R(\mathbb P_k^1,D))\ge 5$.

\vskip.3truecm
\noindent
{\bf Case 11.}
We assume that $m=3$, $3\le n\le 6$ and  $a=b=0$.
Let $D'=2P_0 -  \frac{3}{4} P_1-\frac{6}{7} P_2 -\frac{1}{3}P_3$.
Then $\mathrm{deg} [lD']\ge -1$ for any $l\in \mathbb N$
since $\mathrm{deg} [lD']\ge -1$ for $1\le l \le 83$ and $\mathrm{deg} [84D']= 5$.
Note that $D\ge D'$ by Lemma \ref{[[a]]<[[b]]}.
Therefore $R(\mathbb P_k^1,D)$ has a rational singularity by Lemma \ref{bigger coefficient rationality}.
The dual graphs of the minimal good resolution of $\mathrm{Spec} (R(\mathbb P_k^1,D))$ are the following:
\[
\scalebox{0.6}{
\xymatrix@C=12pt@R=12pt{
& *++[o][F]{-2}  \ar@{-}[d] \ar@{}[r]|(0.5) *{1}
 &
 &
 &
 &
 &
 & 
\\
 & *++[o][F]{-2}  \ar@{-}[d] \ar@{}[r]|(0.5) *{2}
 &
 &
 &
 &
 &
 & 
\\
 & *++[o][F]{-2}  \ar@{-}[d] \ar@{}[r]|(0.5) *{3}
 &
 &
 &
 &
 &
 & 
\\
  *++[o][F]{-3}  \ar@{}[d]|(0.8) *{2} \ar@{-}[r] 
 & *++[o][F]{-2}  \ar@{-}[r] \ar@{}[d]|(0.8) *{4}
 & *++[o][F]{-2}  \ar@{-}[r] \ar@{}[d]|(0.8) *{3}
 & *++[o][F]{-2}  \ar@{-}[r] \ar@{}[d]|(0.8) *{2}
 &  *++[o][F]{-2}  \ar@{}[d]|(0.8) *{1}  \\
&&&&&
}
}
\scalebox{0.6}{
\xymatrix@C=12pt@R=12pt{
 & *++[o][F]{-2}  \ar@{-}[d] \ar@{}[r]|(0.5) *{2}
 &
 &
 &
 &
 &
 & 
\\
 & *++[o][F]{-2}  \ar@{-}[d] \ar@{}[r]|(0.5) *{3}
 &
 &
 &
 &
 &
 & 
\\
 & *++[o][F]{-2}  \ar@{-}[d] \ar@{}[r]|(0.5) *{4}
 &
 &
 &
 &
 &
 & 
\\
  *++[o][F]{-3}  \ar@{}[d]|(0.8) *{2} \ar@{-}[r] 
 & *++[o][F]{-2}  \ar@{-}[r] \ar@{}[d]|(0.8) *{5}
 & *++[o][F]{-2}  \ar@{-}[r] \ar@{}[d]|(0.8) *{4}
 & *++[o][F]{-2}  \ar@{-}[r] \ar@{}[d]|(0.8) *{3}
 & *++[o][F]{-2}  \ar@{-}[r] \ar@{}[d]|(0.8) *{2}
 &  *++[o][F]{-2}  \ar@{}[d]|(0.8) *{1} \\
&&&&&
}
}
\]
\[
\scalebox{0.6}{
\xymatrix@C=12pt@R=12pt{
 & *++[o][F]{-2}  \ar@{-}[d] \ar@{}[r]|(0.5) *{2}
 &
 &
 &
 &
 &
 & 
\\
 & *++[o][F]{-2}  \ar@{-}[d] \ar@{}[r]|(0.5) *{4}
 &
 &
 &
 &
 &
 & 
\\
 & *++[o][F]{-2}  \ar@{-}[d] \ar@{}[r]|(0.5) *{5}
 &
 &
 &
 &
 &
 & 
\\
  *++[o][F]{-3}  \ar@{}[d]|(0.8) *{2} \ar@{-}[r] 
 & *++[o][F]{-2}  \ar@{-}[r] \ar@{}[d]|(0.8) *{6}
 & *++[o][F]{-2}  \ar@{-}[r] \ar@{}[d]|(0.8) *{5}
 & *++[o][F]{-2}  \ar@{-}[r] \ar@{}[d]|(0.8) *{4}
 & *++[o][F]{-2}  \ar@{-}[r] \ar@{}[d]|(0.8) *{3}
 & *++[o][F]{-2}  \ar@{-}[r] \ar@{}[d]|(0.8) *{2}
 &  *++[o][F]{-2}  \ar@{}[d]|(0.8) *{1} \\
&&&&&&&&
}
}
\]
\[
\scalebox{0.6}{
\xymatrix@C=12pt@R=12pt{
& *++[o][F]{-2}  \ar@{-}[d] \ar@{}[r]|(0.5) *{1}
 &
 &
 &
 &
 &
 & 
\\
 & *++[o][F]{-2}  \ar@{-}[d] \ar@{}[r]|(0.5) *{4}
 &
 &
 &
 &
 &
 & 
\\
 & *++[o][F]{-2}  \ar@{-}[d] \ar@{}[r]|(0.5) *{6}
 &
 &
 &
 &
 &
 & 
\\
  *++[o][F]{-3}  \ar@{}[d]|(0.8) *{3} \ar@{-}[r] 
 & *++[o][F]{-2}  \ar@{-}[r] \ar@{}[d]|(0.8) *{8}
 & *++[o][F]{-2}  \ar@{-}[r] \ar@{}[d]|(0.8) *{7}
 & *++[o][F]{-2}  \ar@{-}[r] \ar@{}[d]|(0.8) *{6}
 & *++[o][F]{-2}  \ar@{-}[r] \ar@{}[d]|(0.8) *{5}
 & *++[o][F]{-2}  \ar@{-}[r] \ar@{}[d]|(0.8) *{4}
 & *++[o][F]{-2}  \ar@{-}[r] \ar@{}[d]|(0.8) *{3}
 &  *++[o][F]{-2}  \ar@{}[d]|(0.8) *{2} \\
&&&&&&&
}
}
\]
Therefore $e(R(\mathbb P_k^1,D))=4$ when $3\le n\le 5$ and $e(R(\mathbb P_k^1,D))=5$ when $n=6$.

\end{proof}

\subsection{ The case there is unique $(-4)$-curve }
In this subsection we classify the  $R(\mathbb P_k^1,D)$ with a rational singularity such that 
there is unique $(-4)$-curve    in the dual graph of the minimal good resolution of $\mathrm{Spec}(R(\mathbb P_k^1,D))$ and others are $(-2)$-curves.
First, we consider the case the central curve is a $(-4)$-curve.

\begin{prop}\label{central 4}
Let $D=4P_0-\sum_{i=1}^4a_iP_i$  be an ample $\mathbb Q$-divisor on $\mathbb P_k^1$, where $a_i\in \mathbb Q_{\ge 0}$ and $P_i$ are distinct points of $\mathbb P_k^1$.
Assume that $a_1=\frac{a}{a+1}$,  $a_2=\frac{b}{b+1}$, $a_3=\frac{c}{c+1}$ and $a_4=\frac{d}{d+1}$ for $a,b,c,d\in \mathbb Z_{\ge 0}$.
Then $R(\mathbb P_k^1,D)$ has a rational singularity with $e(R(\mathbb P_k^1,D))=4$. 
\end{prop}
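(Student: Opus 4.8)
The plan is to obtain rationality essentially for free and then to pin down the multiplicity by computing the fundamental cycle of the minimal good resolution explicitly. \emph{Rationality.} Write $a_i=n_i/(n_i+1)$ with $n_i\in\mathbb Z_{\ge 0}$, so $0\le a_i<1$. Then for every $n\ge 1$ one has $\mathrm{deg}[nD]=4n-\sum_{i=1}^4\lceil na_i\rceil\ge 4n-4n=0\ge -1$, so $R:=R(\mathbb P_k^1,D)$ has a rational singularity by Theorem \ref{graded rational singularity}. (Since $s=4\ge r$, this is also a special case of Proposition \ref{f-rational deg 1}.(1), which moreover yields $F$-rationality of $R$.)

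\emph{Multiplicity.} After discarding the indices $i$ with $a_i=0$ (such a point contributes nothing to the Pinkham--Demazure data and no branch to the dual graph), for the remaining $i$ we have $n_i\ge 1$ and $d_i/c_i=(n_i+1)/n_i=[[(2)^{n_i}]]$ by Example \ref{[[2]]}. Hence, by Theorem \ref{dual graph of graded ring}, the exceptional set of the minimal good resolution $f\colon X\to\mathrm{Spec}(R)$ consists of the central curve $E_0$ with $E_0^2=-4$ together with, for each such $i$, a chain $E_{i1}-\cdots-E_{in_i}$ of $(-2)$-curves meeting $E_0$ in $E_{i1}$. By Corollary \ref{n_0=min} the coefficient of $E_0$ in the fundamental cycle $Z$ equals $\min\{n\mid\mathrm{deg}[nD]\ge 0\}=1$, since $\mathrm{deg}[D]=4-\#\{i\mid a_i>0\}\ge 0$. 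Plugging $n_0=1$ into the recursion of Lemma \ref{formula} and using that every tail $e_{ij}=[[(2)^{m_i-j+1}]]$ satisfies $e_{ij}>1$ (Lemma \ref{[[a]]>1}, Example \ref{[[2]]}), we get $n_{i1}=\lceil c_i/d_i\rceil=1$ and then $n_{i,j+1}=\lceil n_{ij}/e_{i,j+1}\rceil=1$ for all $j$; thus $Z=E_0+\sum_i\sum_j E_{ij}$ with all coefficients equal to $1$. Proposition \ref{Z^2} now gives $e(R)=\sum_k n_k(-E_k^2-2)+2$, where the $(-2)$-curves contribute $0$ and $E_0$ contributes $1\cdot(4-2)=2$, so $e(R)=2+2=4$.

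There is no serious obstacle here: once the cited results are lined up the argument is pure bookkeeping. The only points needing a touch of care are the reduction to the standard hypothesis $0<c_i<d_i$ (handling $a_i=0$ by simply deleting that point, so the corresponding branch vanishes; the extreme case $D=4P_0$ being the cone over the degree-$4$ rational normal curve, with $Z=E_0$ and $e(R)=-E_0^2=4$) and the observation that the divisor produced by Lemma \ref{formula} really is the fundamental cycle, which is guaranteed by the last sentence of that lemma together with the value $n_0=1$ from Corollary \ref{n_0=min}.
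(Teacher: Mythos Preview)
Your proof is correct and follows the same approach as the paper: establish rationality via $\mathrm{deg}[nD]\ge 0$ and compute the multiplicity from the fundamental cycle of the minimal good resolution using Proposition \ref{Z^2}. The paper simply displays the dual graph with all coefficients equal to $1$ and reads off $e(R)=4$, whereas you spell out why every coefficient equals $1$ by invoking Corollary \ref{n_0=min} and the recursion in Lemma \ref{formula}; this is just a more detailed version of the same argument.
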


\begin{proof}
$R(\mathbb P_k^1,D)$ has a rational singularity since $\mathrm{deg} [lD]\ge 0$ for any $l\in\mathbb N$. 
The dual graph of the minimal good resolution of $\mathrm{Spec} (R(\mathbb P_k^1,D))$ is the following:
\begin{equation*}
\scalebox{0.6}{
\xymatrix@C=12pt@R=12pt{
  *++[o][F]{-2}  \ar@{}[d]|(0.8) *{1} \ar@{-}[r] 
 &  \cdots \ar@{-}[r]
 &  *++[o][F]{-2}  \ar@{}[d]|(0.8) *{1} \ar@{-}[dr]
 &
 & *++[o][F]{-2}  \ar@{-}[r] \ar@{}[d]|(0.8) *{1} 
 &  \cdots \ar@{-}[r]
 &  *++[o][F]{-2}  \ar@{}[d]|(0.8) *{1}
 \\
&&& *++[o][F]{-4}  \ar@{}[d]|(0.8) *{1} \ar@{-}[dr]  \ar@{-}[ur] &&&\\
 *++[o][F]{-2}  \ar@{}[d]|(0.8) *{1} \ar@{-}[r] 
 &  \cdots \ar@{-}[r]
 &  *++[o][F]{-2}  \ar@{}[d]|(0.8) *{1} \ar@{-}[ur]
 &
& *++[o][F]{-2}  \ar@{-}[r] \ar@{}[d]|(0.8) *{1} 
 &  \cdots \ar@{-}[r]
 &  *++[o][F]{-2}  \ar@{}[d]|(0.8) *{1} \\
&&&&&&&
}
}
\end{equation*}
Therefore $e(R(\mathbb P_k^1,D))=4$.
\end{proof}

Next, we consider the case the central curve is a $(-2)$-curve.
\begin{prop}\label{central 2-4}
Let $D=2P_0-\sum_{i=1}^3a_iP_i$  be an ample $\mathbb Q$-divisor on $\mathbb P_k^1$, where $a_i\in \mathbb Q_{\ge 0}$ and $P_i$ are distinct points of $\mathbb P_k^1$.
Assume that $a_1\le a_2$, $a_1=\frac{m}{m+1}$,  $a_2=\frac{n}{n+1}$,   $\frac{1}{a_3}=[[(2)^a,4, (2)^b]]$ for $m,n,a,b\in \mathbb Z_{\ge 0}$ and $R(\mathbb P_k^1,D)$ has a rational singularity with
 $e(R(\mathbb P_k^1,D))=4$. 
Then $(a_1,a_2,a_3)$ is  one of the following:
\begin{align*}
\mathrm{(1)}&\ (0,\frac{n}{n+1},\frac{(2a+1)b+3a+1}{(2a+3)b+3a+4}) & &\mbox{for}\ n\ge 0, a\ge 0, b\ge 0, \\
\mathrm{(2)}&\ (\frac{1}{2},\frac{n}{n+1},\frac{b+1}{3b+4}) & &\mbox{for}\ n\ge 1,b\ge 0 \\
\mathrm{(3)}&\ (\frac{1}{2},\frac{1}{2},\frac{(2a+1)b+3a+1}{(2a+3)b+3a+4}) & &\mbox{for}\ a\ge 1, b\ge 0, \\
 \mathrm{(4)}&\ (\frac{1}{2},\frac{2}{3},\frac{3b+4}{5b+7})\ \ \mbox{for}\ b\ge 0,& \mathrm{(5)}&\ (\frac{1}{2},\frac{2}{3},\frac{5b+7}{7b+10})\ \ \mbox{for}\ b\ge 0,\\
 \mathrm{(6)}&\ (\frac{1}{2},\frac{2}{3},\frac{7b+10}{9b+13})\ \ \mbox{for}\ b\ge 0,&\mathrm{(7)}&\ (\frac{1}{2},\frac{3}{4},\frac{3b+4}{5b+7})\ \ \mbox{for}\ b\ge 0,\\
\mathrm{(8)}&\ (\frac{1}{2},\frac{4}{5},\frac{3b+4}{5b+7})\ \ \mbox{for}\ b\ge 0,&\mathrm{(9)}&\ (\frac{1}{2},\frac{5}{6},\frac{4}{7}),\\
\mathrm{(10)}&\ (\frac{1}{2},\frac{6}{7},\frac{4}{7}),& \mathrm{(11)}&\ (\frac{2}{3},\frac{n}{n+1},\frac{b+1}{3b+4})\   \mbox{for}\ n\ge 2, b\ge 0,\\
\mathrm{(12)}&\ (\frac{3}{4},\frac{n}{n+1},\frac{1}{4})\ \ \mbox{for}\ n\ge 3.
\end{align*}
\end{prop}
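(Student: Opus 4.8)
The plan is to follow the same strategy as in the proof of Proposition~\ref{central 2-3}: a finite case analysis organized first by the parameter $m$, then by $n$ and by $a,b$. Write $D = 2P_0 - \tfrac{m}{m+1}P_1 - \tfrac{n}{n+1}P_2 - a_3P_3$ with $\tfrac{1}{a_3} = [[(2)^a, 4, (2)^b]]$, and recall from Theorem~\ref{graded rational singularity} that the rational singularity hypothesis is equivalent to $\mathrm{deg}[\ell D] \ge -1$ for all $\ell \in \mathbb{N}$. Since $0 \le a_1 \le a_2 < 1$ and $0 < a_3 < 1$ (the latter by Lemma~\ref{[[a]]>1}), evaluating $\mathrm{deg}[\ell D] \ge -1$ for a handful of small $\ell \in \{2, 3, 5, 7, \dots\}$ forces $m \in \{0, 1, 2, 3\}$ and, for each such $m$, restricts the range of $n$ and of $a$ that can possibly occur.

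Within this bounded range I would eliminate the remaining unwanted tuples by one of two mechanisms, exactly as in the proof of Proposition~\ref{central 2-3}. Either $R(\mathbb{P}^1_k, D)$ fails to have a rational singularity, which is detected by producing an explicit $\ell \in \{2, 3, 5, 7, \dots\}$ with $\mathrm{deg}[\ell D] \le -2$ (cf.\ Case~5 there); or $R(\mathbb{P}^1_k, D)$ has a rational singularity but $e(R(\mathbb{P}^1_k, D)) \ne 4$, which is detected by computing $\mathrm{Coeff}_{E_0}(Z) = \min\{n \mid \mathrm{deg}[nD] \ge 0\}$ via Corollary~\ref{n_0=min}, propagating it along the branch by Lemma~\ref{formula}, and reading off $-Z^2$ by Proposition~\ref{Z^2} (cf.\ Case~10 there); the $e = 3$ tuples and the $e = 5$ tuples drop out at this stage.

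For each surviving tuple $(m, n, a, b)$ I would then: (i) confirm the rational singularity, either directly from $\mathrm{deg}[\ell D] \ge 0$ for all $\ell$ when $m = 0$, or by a comparison $D \ge D'$ with a $(\tfrac12, \tfrac12, \tfrac{n}{n+1})$-type or $(\tfrac12, \tfrac23, \tfrac45)$-type minorant $D'$ using Lemma~\ref{bigger coefficient rationality} and Lemma~\ref{(2,2,n)type}, or, for the few borderline families, by checking $\mathrm{deg}[\ell D'] \ge -1$ for $\ell$ up to an explicit bound for a suitable minorant $D'$ and then extending to all $\ell$ by additivity once $\mathrm{deg}[N D'] = 1$ (as in Cases~7 and~11 of Proposition~\ref{central 2-3}); (ii) draw the dual graph of the minimal good resolution from Theorem~\ref{dual graph of graded ring}, where the value $[[(2)^a, 4, (2)^b]]$ — and hence $a_3 = \tfrac{(2a+1)b + 3a + 1}{(2a+3)b + 3a + 4}$ after the appropriate substitutions — is computed by Lemma~\ref{[[2,n,2]]}; (iii) determine the fundamental cycle $Z$ by Lemma~\ref{formula} and Corollary~\ref{n_0=min}; and (iv) read off $e(R(\mathbb{P}^1_k, D)) = -Z^2$ from Proposition~\ref{Z^2}. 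Retaining precisely the tuples with $-Z^2 = 4$ and rewriting $a_3$ through Lemma~\ref{[[2,n,2]]} yields the twelve families~(1)--(12).

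The main obstacle is purely bookkeeping: one must locate the exact thresholds in $n$ and in $a$ that separate $e = 3$ from $e = 4$ from $e = 5$, since the sporadic entries such as $(\tfrac12, \tfrac56, \tfrac47)$, $(\tfrac12, \tfrac67, \tfrac47)$ and $(\tfrac34, \tfrac{n}{n+1}, \tfrac14)$ occur precisely at these thresholds and each demands its own finite $\mathrm{deg}[\ell D']$ verification. There is no new conceptual input beyond Proposition~\ref{central 2-3}: replacing the distinguished $(-3)$-curve by a $(-4)$-curve merely changes the continued fraction $[[(2)^a, 3, (2)^b]]$ to $[[(2)^a, 4, (2)^b]]$ and shifts the arithmetic in Lemma~\ref{[[2,n,2]]} accordingly.
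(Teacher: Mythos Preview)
Your proposal is correct and follows essentially the same approach as the paper: a case analysis on $m$, then $n$, then $a,b$, eliminating tuples either by exhibiting $\ell$ with $\mathrm{deg}[\ell D]\le -2$ or by computing $-Z^2$ via Corollary~\ref{n_0=min}, Lemma~\ref{formula}, and Proposition~\ref{Z^2}, and confirming the survivors by comparison with a rational minorant. One small simplification you could exploit: since the unique non-$(-2)$-curve is a $(-4)$-curve, Proposition~\ref{Z^2} gives $e(R)=2\cdot\mathrm{Coeff}_{E_{(-4)}}(Z)+2$, so $e(R)$ is always even and there are no $e=3$ or $e=5$ tuples to eliminate---only $e\ge 6$ cases need to be ruled out.
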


\begin{proof}
\noindent
{\bf Case 1.}
We assume that $m=0$.
Then $R(\mathbb P_k^1,D)$ has a rational singularity since 
 $\mathrm{deg} [lD]\ge 0$ for any $l\in\mathbb N$. 
The dual graph of the minimal good resolution of $\mathrm{Spec} (R(\mathbb P_k^1,D))$ is the following:
\begin{eqnarray*}
\scalebox{0.6}{
\xymatrix@C=12pt@R=12pt{
  *++[o][F]{-2}  \ar@{}[d]|(0.8) *{1} \ar@{-}[r] 
 & \cdots \ar@{-}[r]  
 & *++[o][F]{-2}  \ar@{-}[r] \ar@{}[d]|(0.8) *{1} 
 & *++[o][F]{-4}  \ar@{-}[r] \ar@{}[d]|(0.8) *{1} 
 & *++[o][F]{-2}  \ar@{-}[r] \ar@{}[d]|(0.8) *{1}
 & \cdots \ar@{-}[r] 
 & *++[o][F]{-2}  \ar@{-}[r] \ar@{}[d]|(0.8) *{1} 
 & *++[o][F]{-2}  \ar@{-}[r] \ar@{}[d]|(0.8) *{1} 
 & *++[o][F]{-2}  \ar@{-}[r] \ar@{}[d]|(0.8) *{1} 
 & \cdots \ar@{-}[r]  
 & *++[o][F]{-2}  \ar@{}[d]|(0.8) *{1} \\
&&&&&&&&&&
}
}
\end{eqnarray*}
Therefore $e(R(\mathbb P_k^1,D))=4$.

\vskip.3truecm
\noindent
{\bf Case 2.}
We assume that $m=1$ and  $a=0$.
Note that   $D\ge  2P_0 -  \frac{1}{2} P_1-\frac{n}{n+1} P_2 -\frac{1}{2}P_3$ by Lemma \ref{[[a]]<[[b]]}. 
Therefore $R(\mathbb P_k^1,D)$ has a rational singularity by Lemma \ref{bigger coefficient rationality} and  Lemma \ref{(2,2,n)type}.
The dual graph of the minimal good resolution of $\mathrm{Spec} (R(\mathbb P_k^1,D))$ is the following:
\begin{eqnarray*}
\scalebox{0.6}{
\xymatrix@C=12pt@R=12pt{
 &
 &
 &
 & *++[o][F]{-2}  \ar@{-}[d] \ar@{}[r]|(0.5) *{1}
 &
 &
 & 
\\
   *++[o][F]{-2}  \ar@{}[d]|(0.8) *{1} \ar@{-}[r] 
 &  \cdots \ar@{-}[r]
 & *++[o][F]{-2}  \ar@{-}[r] \ar@{}[d]|(0.8) *{1}
 & *++[o][F]{-4}  \ar@{-}[r] \ar@{}[d]|(0.8) *{1}
 & *++[o][F]{-2}  \ar@{-}[r] \ar@{}[d]|(0.8) *{2}
 & *++[o][F]{-2}  \ar@{-}[r] \ar@{}[d]|(0.8) *{2}
 & \cdots \ar@{-}[r] 
 & *++[o][F]{-2}  \ar@{-}[r] \ar@{}[d]|(0.8) *{2} 
 & *++[o][F]{-2}  \ar@{}[d]|(0.8) *{1}\\
&&&&&&&&
}
}
\end{eqnarray*}
Therefore $e(R(\mathbb P_k^1,D))=4$.

\vskip.3truecm
\noindent
{\bf Case 3.}
We assume that $m=n=1$ and $a\ge 1$.
Note that   $D\ge  2P_0 -  \frac{1}{2} P_1-\frac{1}{2} P_2 -\frac{a+b+1}{a+b+2}P_3$ by Lemma \ref{[[a]]<[[b]]}. 
Therefore $R(\mathbb P_k^1,D)$ has a rational singularity by Lemma \ref{bigger coefficient rationality} and  Lemma \ref{(2,2,n)type}.
The dual graph of the minimal good resolution of $\mathrm{Spec} (R(\mathbb P_k^1,D))$ is the following:
\begin{eqnarray*}
\scalebox{0.6}{
\xymatrix@C=12pt@R=12pt{
 &
 &
 &
 & 
 & 
 &
 & *++[o][F]{-2}  \ar@{-}[d] \ar@{}[r]|(0.5) *{1}
 &
\\
  *++[o][F]{-2}  \ar@{}[d]|(0.8) *{1} \ar@{-}[r] 
 & \cdots \ar@{-}[r]  
 & *++[o][F]{-2}  \ar@{-}[r] \ar@{}[d]|(0.8) *{1} 
 & *++[o][F]{-4}  \ar@{-}[r] \ar@{}[d]|(0.8) *{1} 
 & *++[o][F]{-2}  \ar@{-}[r] \ar@{}[d]|(0.8) *{2}
 & \cdots \ar@{-}[r] 
 & *++[o][F]{-2}  \ar@{-}[r] \ar@{}[d]|(0.8) *{2} 
 & *++[o][F]{-2}  \ar@{-}[r] \ar@{}[d]|(0.8) *{2} 
 & *++[o][F]{-2}  \ar@{}[d]|(0.8) *{1} \\
&&&&&&&&
}
}
\end{eqnarray*}
Therefore $e(R(\mathbb P_k^1,D))=4$.

\vskip.3truecm
\noindent
{\bf Case 4.}
We assume that $m=1$, $n=2$ and $1 \le a\le 3$.
Note that $D\ge  2P_0 -  \frac{1}{2} P_1-\frac{2}{3} P_2 -\frac{4}{5}P_3$ by Lemma \ref{[[a]]<[[b]]}. 
Therefore $R(\mathbb P_k^1,D)$ has a rational singularity by Lemma \ref{bigger coefficient rationality} and  Lemma \ref{(2,2,n)type}.
The dual graphs of the minimal good resolution of $\mathrm{Spec} (R(\mathbb P_k^1,D))$ are the following:
\begin{eqnarray*}
\scalebox{0.6}{
\xymatrix@C=12pt@R=12pt{
 &
 &
 & 
 &
 & *++[o][F]{-2}  \ar@{-}[d] \ar@{}[r]|(0.5) *{2}
 &
 &
\\
  *++[o][F]{-2}  \ar@{}[d]|(0.8) *{1}  \ar@{-}[r] 
 & \cdots \ar@{-}[r]  
 & *++[o][F]{-2}  \ar@{-}[r] \ar@{}[d]|(0.8) *{1}
 & *++[o][F]{-4}  \ar@{-}[r] \ar@{}[d]|(0.8) *{1}
 & *++[o][F]{-2}  \ar@{-}[r] \ar@{}[d]|(0.8) *{2}
 & *++[o][F]{-2}  \ar@{-}[r] \ar@{}[d]|(0.8) *{3}
 & *++[o][F]{-2}  \ar@{-}[r] \ar@{}[d]|(0.8) *{2}
 & *++[o][F]{-2}  \ar@{}[d]|(0.8) *{1} \\
&&&&&&&&&
}
}
\end{eqnarray*}
\begin{eqnarray*}
\scalebox{0.6}{
\xymatrix@C=12pt@R=12pt{
 &
 &
 &
 & 
 &
 & *++[o][F]{-2}  \ar@{-}[d] \ar@{}[r]|(0.5) *{2}
 &
 &
\\
  *++[o][F]{-2}  \ar@{}[d]|(0.8) *{1}  \ar@{-}[r] 
 & \cdots \ar@{-}[r]  
 & *++[o][F]{-2}  \ar@{-}[r] \ar@{}[d]|(0.8) *{1}
 & *++[o][F]{-4}  \ar@{-}[r] \ar@{}[d]|(0.8) *{1}
 & *++[o][F]{-2}  \ar@{-}[r] \ar@{}[d]|(0.8) *{2}
 & *++[o][F]{-2}  \ar@{-}[r] \ar@{}[d]|(0.8) *{3}
 & *++[o][F]{-2}  \ar@{-}[r] \ar@{}[d]|(0.8) *{4}
 & *++[o][F]{-2}  \ar@{-}[r] \ar@{}[d]|(0.8) *{3}
 & *++[o][F]{-2}  \ar@{}[d]|(0.8) *{2} \\
&&&&&&&&
}
}
\end{eqnarray*}
\begin{eqnarray*}
\scalebox{0.6}{
\xymatrix@C=12pt@R=12pt{
 &
 &
 &
 &
 & 
 &
 & *++[o][F]{-2}  \ar@{-}[d] \ar@{}[r]|(0.5) *{3}
 &
 &
\\
  *++[o][F]{-2}  \ar@{}[d]|(0.8) *{1}  \ar@{-}[r] 
 & \cdots \ar@{-}[r]  
 & *++[o][F]{-2}  \ar@{-}[r] \ar@{}[d]|(0.8) *{1}
 & *++[o][F]{-4}  \ar@{-}[r] \ar@{}[d]|(0.8) *{1}
 & *++[o][F]{-2}  \ar@{-}[r] \ar@{}[d]|(0.8) *{3}
 & *++[o][F]{-2}  \ar@{-}[r] \ar@{}[d]|(0.8) *{4}
 & *++[o][F]{-2}  \ar@{-}[r] \ar@{}[d]|(0.8) *{5}
 & *++[o][F]{-2}  \ar@{-}[r] \ar@{}[d]|(0.8) *{6}
 & *++[o][F]{-2}  \ar@{-}[r] \ar@{}[d]|(0.8) *{4}
 & *++[o][F]{-2}  \ar@{}[d]|(0.8) *{2} \\
&&&&&&&&&
}
}
\end{eqnarray*}
Therefore $e(R(\mathbb P_k^1,D))=4$.

\vskip.3truecm
\noindent
{\bf Case 5.}
Assume one of the following holds:
\begin{enumerate}
\item[(i)]
$m=1$, $n=2$ and $a\ge 4$.
\item[(ii)]
$m=1$, $n\ge 3$ and  $a\ge 2$.
\item[(iii)]
$m=1$, $n\ge 7$,  $a\ge 1$ and $b\ge 1$.
\item[(iv)]
$m=1$, $n\ge 9$  and $a\ge 1$.
\item[(v)]
$m\ge 2$ and $a\ge 1$.
\end{enumerate}
Then $R(\mathbb{P}^1_k, D)$ does not have a rational singularity because $\deg[5D] \le -2$ in case (i), $\deg[3D] \le -2$ in case (ii), $\deg[7D] \le -2$ in case (iii), $\deg[9D] \le -2$ in case (iv) and $\deg[2D] \le -2$ in case (v).

\vskip.3truecm
\noindent
{\bf Case 6.}
We assume that $m=1$, $3\le n\le 6$ and $a=1$.
Note that $\frac{5}{3}=[[2,3]]$ and $D\ge  2P_0 -  \frac{1}{2} P_1-\frac{6}{7} P_2 -\frac{3}{5}P_3$ by Lemma \ref{[[a]]<[[b]]}.
Therefore $R(\mathbb P_k^1,D)$ has a rational singularity by Lemma \ref{bigger coefficient rationality} and  Proposition \ref{central 2-3}(ii)(5).
The dual graphs of the minimal good resolution of $\mathrm{Spec} (R(\mathbb P_k^1,D))$ are the following: 
\begin{eqnarray*}
\scalebox{0.6}{
\xymatrix@C=12pt@R=12pt{
 &
 &
 &
 &
 & *++[o][F]{-2}  \ar@{-}[d] \ar@{}[r]|(0.5) *{2}
 &
 & 
 &
\\
  *++[o][F]{-2}  \ar@{}[d]|(0.8) *{1}  \ar@{-}[r] 
 & \cdots \ar@{-}[r]  
 & *++[o][F]{-2}  \ar@{-}[r] \ar@{}[d]|(0.8) *{1}
 & *++[o][F]{-4}  \ar@{-}[r] \ar@{}[d]|(0.8) *{1}
 & *++[o][F]{-2}  \ar@{-}[r] \ar@{}[d]|(0.8) *{3}
 & *++[o][F]{-2}  \ar@{-}[r] \ar@{}[d]|(0.8) *{4}
 & *++[o][F]{-2}  \ar@{-}[r] \ar@{}[d]|(0.8) *{3}
 & *++[o][F]{-2}  \ar@{-}[r] \ar@{}[d]|(0.8) *{2}
 & *++[o][F]{-2}  \ar@{}[d]|(0.8) *{1} \\
&&&&&&&&&&
}
}
\end{eqnarray*}
\begin{eqnarray*}
\scalebox{0.6}{
\xymatrix@C=12pt@R=12pt{
 &
 &
 &
 &
 & *++[o][F]{-2}  \ar@{-}[d] \ar@{}[r]|(0.5) *{3}
 &
 & 
 &
 &
\\
  *++[o][F]{-2}  \ar@{}[d]|(0.8) *{1}  \ar@{-}[r] 
 & \cdots \ar@{-}[r]  
 & *++[o][F]{-2}  \ar@{-}[r] \ar@{}[d]|(0.8) *{1}
 & *++[o][F]{-4}  \ar@{-}[r] \ar@{}[d]|(0.8) *{1}
 & *++[o][F]{-2}  \ar@{-}[r] \ar@{}[d]|(0.8) *{3}
 & *++[o][F]{-2}  \ar@{-}[r] \ar@{}[d]|(0.8) *{5}
 & *++[o][F]{-2}  \ar@{-}[r] \ar@{}[d]|(0.8) *{4}
 & *++[o][F]{-2}  \ar@{-}[r] \ar@{}[d]|(0.8) *{3}
 & *++[o][F]{-2}  \ar@{-}[r] \ar@{}[d]|(0.8) *{2}
 & *++[o][F]{-2}  \ar@{}[d]|(0.8) *{1} \\
&&&&&&&&&
}
}
\end{eqnarray*}
\begin{eqnarray*}
\scalebox{0.6}{
\xymatrix@C=12pt@R=12pt{
 &
 &
 &
 &
 &
 & *++[o][F]{-2}  \ar@{-}[d] \ar@{}[r]|(0.5) *{3}
 &
 & 
 &
 &
 &
\\
  *++[o][F]{-2}  \ar@{}[d]|(0.8) *{1}  \ar@{-}[r] 
 & *++[o][F]{-2}  \ar@{-}[r] \ar@{}[d]|(0.8) *{2}
 & \cdots \ar@{-}[r]  
 & *++[o][F]{-2}  \ar@{-}[r] \ar@{}[d]|(0.8) *{2}
 & *++[o][F]{-4}  \ar@{-}[r] \ar@{}[d]|(0.8) *{2} 
 & *++[o][F]{-2}  \ar@{-}[r] \ar@{}[d]|(0.8) *{4}
 & *++[o][F]{-2}  \ar@{-}[r] \ar@{}[d]|(0.8) *{6}
 & *++[o][F]{-2}  \ar@{-}[r] \ar@{}[d]|(0.8) *{5}
 & *++[o][F]{-2}  \ar@{-}[r] \ar@{}[d]|(0.8) *{4}
 & *++[o][F]{-2}  \ar@{-}[r] \ar@{}[d]|(0.8) *{3}
 & *++[o][F]{-2}  \ar@{-}[r] \ar@{}[d]|(0.8) *{2}
 & *++[o][F]{-2}  \ar@{}[d]|(0.8) *{1} \\
&&&&&&&&&&&
}
}
\end{eqnarray*}
\begin{eqnarray*}
\scalebox{0.6}{
\xymatrix@C=12pt@R=12pt{
 &
 &
 &
 &
 &
 & *++[o][F]{-2}  \ar@{-}[d] \ar@{}[r]|(0.5) *{4}
 &
 & 
 &
 &
 &
 &
\\
  *++[o][F]{-2}  \ar@{}[d]|(0.8) *{1}  \ar@{-}[r] 
 & *++[o][F]{-2}  \ar@{-}[r] \ar@{}[d]|(0.8) *{2}
 & \cdots \ar@{-}[r]  
 & *++[o][F]{-2}  \ar@{-}[r] \ar@{}[d]|(0.8) *{2}
 & *++[o][F]{-4}  \ar@{-}[r] \ar@{}[d]|(0.8) *{2} 
 & *++[o][F]{-2}  \ar@{-}[r] \ar@{}[d]|(0.8) *{5}
 & *++[o][F]{-2}  \ar@{-}[r] \ar@{}[d]|(0.8) *{8}
 & *++[o][F]{-2}  \ar@{-}[r] \ar@{}[d]|(0.8) *{7}
 & *++[o][F]{-2}  \ar@{-}[r] \ar@{}[d]|(0.8) *{6}
 & *++[o][F]{-2}  \ar@{-}[r] \ar@{}[d]|(0.8) *{5}
 & *++[o][F]{-2}  \ar@{-}[r] \ar@{}[d]|(0.8) *{4}
 & *++[o][F]{-2}  \ar@{-}[r] \ar@{}[d]|(0.8) *{3}
 & *++[o][F]{-2}  \ar@{}[d]|(0.8) *{2} \\
&&&&&&&&&&&&
}
}
\end{eqnarray*}
\begin{eqnarray*}
\scalebox{0.6}{
\xymatrix@C=12pt@R=12pt{
 &
 & *++[o][F]{-2}  \ar@{-}[d] \ar@{}[r]|(0.5) *{3}
 &
 & 
 &
 &
 &
\\
   *++[o][F]{-4}  \ar@{}[d]|(0.8) *{1}  \ar@{-}[r] 
 & *++[o][F]{-2}  \ar@{-}[r] \ar@{}[d]|(0.8) *{4}
 & *++[o][F]{-2}  \ar@{-}[r] \ar@{}[d]|(0.8) *{6}
 & *++[o][F]{-2}  \ar@{-}[r] \ar@{}[d]|(0.8) *{5}
 & *++[o][F]{-2}  \ar@{-}[r] \ar@{}[d]|(0.8) *{4}
 & *++[o][F]{-2}  \ar@{-}[r] \ar@{}[d]|(0.8) *{3}
 & *++[o][F]{-2}  \ar@{-}[r] \ar@{}[d]|(0.8) *{2}
 & *++[o][F]{-2}  \ar@{}[d]|(0.8) *{1} \\
&&&&&&&&&
}
}
\end{eqnarray*}
\begin{eqnarray*}
\scalebox{0.6}{
\xymatrix@C=12pt@R=12pt{
 &
 & *++[o][F]{-2}  \ar@{-}[d] \ar@{}[r]|(0.5) *{4}
 &
 & 
 &
 &
 &
 &
\\
  *++[o][F]{-4}  \ar@{}[d]|(0.8) *{1}  \ar@{-}[r] 
 & *++[o][F]{-2}  \ar@{-}[r] \ar@{}[d]|(0.8) *{4}
 & *++[o][F]{-2}  \ar@{-}[r] \ar@{}[d]|(0.8) *{7}
 & *++[o][F]{-2}  \ar@{-}[r] \ar@{}[d]|(0.8) *{6}
 & *++[o][F]{-2}  \ar@{-}[r] \ar@{}[d]|(0.8) *{5}
 & *++[o][F]{-2}  \ar@{-}[r] \ar@{}[d]|(0.8) *{4}
 & *++[o][F]{-2}  \ar@{-}[r] \ar@{}[d]|(0.8) *{3}
 & *++[o][F]{-2}  \ar@{-}[r] \ar@{}[d]|(0.8) *{2}
 & *++[o][F]{-2}  \ar@{}[d]|(0.8) *{1} \\
&&&&&&&&
}
}
\end{eqnarray*}
Therefore $e(R(\mathbb P_k^1,D))=4$ when $n=3,4$ or $n=5,6$ and $b=0$ and $e(R(\mathbb P_k^1,D))=6$ when  $n=5,6$ and $b\ge 1$.

\vskip.3truecm
\noindent
{\bf Case 7.}
Assume one of the following holds:
\begin{enumerate}
\item[(i)]
$m=1$, $7\le n\le 8$,  $a=1$ and $b=0$.
\item[(ii)]
$m=3$, $a=0$ and  $b\ge 1$.
\item[(iii)]
$m\ge 4$ and $a=0$. 
\end{enumerate}
In this case, we have  $e(R(\mathbb P_k^1,D))\ge 6$.
We consider only case (i)  since we can apply the same argument to cases (ii) and (iii).
Let $E_0$ be the central curve of the minimal good resolution of $\mathrm{Spec} (R(\mathbb P_k^1,D))$ and $E_1$ be the $(-4)$-curve in its dual graph.
Let $Z$ be the fundamental cycle of the minimal good resolution of $\mathrm{Spec} (R(\mathbb P_k^1,D))$.
We have $\mathrm{Coeff}_{E_0}(Z)\ge 8$  by Corollary \ref{n_0=min}.
Therefore  $\mathrm{Coeff}_{E_1}(Z)\ge 2$ by Lemma \ref{formula}.
Hence $e(R(\mathbb P_k^1,D))\ge 6$.

\vskip.3truecm
\noindent
{\bf Case 8.}
We assume that $m=2$ and  $a=0$.
Note that $D\ge  2P_0 -  \frac{2}{3} P_1-\frac{n}{n+1} P_2 -\frac{1}{3}P_3$ by Lemma \ref{[[a]]<[[b]]}. 
Therefore $R(\mathbb P_k^1,D)$ has a rational singularity by Lemma \ref{bigger coefficient rationality} and  Proposition \ref{central 2-3}(i)(9).
The dual graph of the minimal good resolution of $\mathrm{Spec} (R(\mathbb P_k^1,D))$ is the following:
\begin{eqnarray*}
\scalebox{0.6}{
\xymatrix@C=12pt@R=12pt{
 &
 &
 &
 & *++[o][F]{-2}  \ar@{-}[d] \ar@{}[r]|(0.5) *{1}
 &
 &
 & 
\\
 &
 &
 &
 & *++[o][F]{-2}  \ar@{-}[d] \ar@{}[r]|(0.5) *{2}
 &
 &
 &
 &
 & 
\\
   *++[o][F]{-2}  \ar@{}[d]|(0.8) *{1} \ar@{-}[r] 
 &  \cdots \ar@{-}[r]
 & *++[o][F]{-2}  \ar@{-}[r] \ar@{}[d]|(0.8) *{1}
 & *++[o][F]{-4}  \ar@{-}[r] \ar@{}[d]|(0.8) *{1}
 & *++[o][F]{-2}  \ar@{-}[r] \ar@{}[d]|(0.8) *{3}
 & *++[o][F]{-2}  \ar@{-}[r] \ar@{}[d]|(0.8) *{3}
 & \cdots \ar@{-}[r] 
 & *++[o][F]{-2}  \ar@{-}[r] \ar@{}[d]|(0.8) *{3}
 & *++[o][F]{-2}  \ar@{-}[r] \ar@{}[d]|(0.8) *{2}
 &  *++[o][F]{-2}  \ar@{}[d]|(0.8) *{1} \\
&&&&&&&&&
}
}
\end{eqnarray*}
Therefore $e(R(\mathbb P_k^1,D))=4$.

\vskip.3truecm
\noindent
{\bf Case 9.}
We assume that $m=3$ and  $a=b=0$.
Then $R(\mathbb P_k^1,D)$ has a rational singularity since 
$\mathrm{deg} [lD]\ge2l+[-\frac{3l}{4}]-l+[-\frac{l}{4}]=[\frac{l}{4}]+[-\frac{l}{4}]\ge -1$ for any $l\in\mathbb N$.
The dual graph of the minimal good resolution of $\mathrm{Spec} (R(\mathbb P_k^1,D))$ is the following:
\begin{eqnarray*}
\scalebox{0.6}{
\xymatrix@C=12pt@R=12pt{
 & *++[o][F]{-2}  \ar@{-}[d] \ar@{}[r]|(0.5) *{1}
 &
 &
 &
 &
 &
 & 
\\
 & *++[o][F]{-2}  \ar@{-}[d] \ar@{}[r]|(0.5) *{2}
 &
 &
 &
 &
 &
 & 
\\
 & *++[o][F]{-2}  \ar@{-}[d] \ar@{}[r]|(0.5) *{3}
 &
 &
 &
 &
 &
 & 
\\
  *++[o][F]{-4}  \ar@{}[d]|(0.8) *{1} \ar@{-}[r] 
 & *++[o][F]{-2}  \ar@{-}[r] \ar@{}[d]|(0.8) *{4}
 & *++[o][F]{-2}  \ar@{-}[r] \ar@{}[d]|(0.8) *{4}
 & \cdots \ar@{-}[r] 
 & *++[o][F]{-2}  \ar@{-}[r] \ar@{}[d]|(0.8) *{4}
 & *++[o][F]{-2}  \ar@{-}[r] \ar@{}[d]|(0.8) *{3}
 & *++[o][F]{-2}  \ar@{-}[r] \ar@{}[d]|(0.8) *{2}
 &  *++[o][F]{-2}  \ar@{}[d]|(0.8) *{1} \\
&&&&&&&
}
}
\end{eqnarray*}
Therefore $e(R(\mathbb P_k^1,D))=4$.

\end{proof}

\subsection{The case there are two $(-3)$-curves}
In this subsection we classify the  $R(\mathbb P_k^1,D)$ with a rational singularity such that 
there are two $(-3)$-curves   in the dual graph of the minimal good resolution of $\mathrm{Spec}(R(\mathbb P_k^1,D))$ and others are $(-2)$-curves.
First, we consider the case the central curve is a $(-3)$-curve.

\begin{prop}\label{central 3-3}
Let $D=3P_0-\sum_{i=1}^3a_iP_i$  be an ample $\mathbb Q$-divisor on $\mathbb P_k^1$, where $a_i\in \mathbb Q_{\ge 0}$ and $P_i$ are distinct points of $\mathbb P_k^1$.
Assume that $a_1=\frac{m}{m+1}$,  $a_2=\frac{n}{n+1}$, $\frac{1}{a_3}=[[(2)^a,3, (2)^b]]$ for $m,n,a,b\in \mathbb Z_{\ge 0}$.
Then $R(\mathbb P_k^1,D)$ has a rational singularity with $e(R(\mathbb P_k^1,D))=4$. 
\end{prop}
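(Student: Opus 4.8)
The plan is to run through the four-step scheme set out after Lemma~\ref{(2,2,n)type}: first check the rational singularity condition, then compute the fundamental cycle, then read off the multiplicity via Proposition~\ref{Z^2}, using the continued-fraction formulas where convenient.

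\emph{Step 1 (rational singularity).} Since $a_1=\tfrac{m}{m+1}<1$, $a_2=\tfrac{n}{n+1}<1$, and $a_3=1/[[(2)^a,3,(2)^b]]<1$ by Lemma~\ref{[[a]]>1}, we have $\lceil la_i\rceil\le l$ for every positive integer $l$, so
\[
\deg[lD]=3l-\lceil la_1\rceil-\lceil la_2\rceil-\lceil la_3\rceil\ge 3l-3l=0.
\]
Hence $R(\mathbb P_k^1,D)$ has a rational singularity by Theorem~\ref{graded rational singularity}, and in particular $\deg[D]\ge 0$.

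\emph{Step 2 (fundamental cycle).} Let $f$ be the minimal good resolution, $E_0$ its central curve (with $E_0^2=-3$), and $Z$ its fundamental cycle. By Corollary~\ref{n_0=min}, $\mathrm{Coeff}_{E_0}(Z)=\min\{n\in\mathbb N\mid \deg[nD]\ge 0\}=1$. Feeding $n_0=1$ into the recursion of Lemma~\ref{formula}, each branch begins with $n_{i1}=\lceil c_i/d_i\rceil=1$ since $0<c_i<d_i$, and then $n_{i,j+1}=\lceil n_{ij}/e_{i,j+1}\rceil=\lceil 1/e_{i,j+1}\rceil=1$ because $e_{i,j+1}>1$ by Lemma~\ref{[[a]]>1}. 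Therefore every coefficient of $Z$ equals $1$.

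\emph{Step 3 (multiplicity).} By Theorem~\ref{dual graph of graded ring} together with Example~\ref{[[2]]} (which gives $\tfrac{m+1}{m}=[[(2)^m]]$), the dual graph of $f$ is star-shaped: $E_0$ with $E_0^2=-3$, two branches of $(-2)$-curves coming from $P_1,P_2$, and one branch coming from $P_3$ whose only non-$(-2)$-curve is a single $(-3)$-curve. Plugging $Z$ into Proposition~\ref{Z^2}, only $E_0$ and that $(-3)$-curve have $-E_i^2-2=1$ while every $(-2)$-curve contributes $0$, so, since all coefficients of $Z$ are $1$,
\[
e(R(\mathbb P_k^1,D))=-Z^2=1+1+2=4.
\]

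I do not anticipate a genuine obstacle here; the only points needing a word of care are the degenerate cases $m=0$ or $n=0$ (where the corresponding $P_i$ simply drops out, leaving one fewer branch of $(-2)$-curves) and $a=b=0$ (where the $P_3$-branch is a single $(-3)$-curve), all of which leave the computations in Steps~2 and~3 unchanged.
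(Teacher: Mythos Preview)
Your proof is correct and follows exactly the paper's approach: establish $\deg[lD]\ge 0$ for all $l$, show the fundamental cycle has all coefficients equal to $1$, and read off $e(R)=4$ from Proposition~\ref{Z^2}. The paper's version is terser (it simply displays the dual graph with its coefficients), but the argument is the same.
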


\begin{proof}
$R(\mathbb P_k^1,D)$ has a rational singularity since $\mathrm{deg} [lD]\ge 0$ for any $l\in\mathbb N$. 
The dual graph of the minimal good resolution of $\mathrm{Spec} (R(\mathbb P_k^1,D))$ is the following:
\begin{equation*}
\scalebox{0.6}{
\xymatrix@C=12pt@R=12pt{
  *++[o][F]{-2}  \ar@{}[d]|(0.8) *{1} \ar@{-}[r] 
 &  \cdots \ar@{-}[r]
 &  *++[o][F]{-2}  \ar@{}[d]|(0.8) *{1} \ar@{-}[dr]\\
&&& *++[o][F]{-3}  \ar@{-}[r] \ar@{}[d]|(0.8) *{1} & *++[o][F]{-2}  \ar@{-}[r] \ar@{}[d]|(0.8) *{1} & \cdots\ar@{-}[r] & *++[o][F]{-2}  \ar@{-}[r] \ar@{}[d]|(0.8) *{1}& *++[o][F]{-3}  \ar@{-}[r] \ar@{}[d]|(0.8) *{1} & *++[o][F]{-2}  \ar@{-}[r] \ar@{}[d]|(0.8) *{1} & \cdots\ar@{-}[r] & *++[o][F]{-2}  \ar@{}[d]|(0.8) *{1}\\
 *++[o][F]{-2}  \ar@{}[d]|(0.8) *{1} \ar@{-}[r] 
 &  \cdots \ar@{-}[r]
 &  *++[o][F]{-2}  \ar@{}[d]|(0.8) *{1} \ar@{-}[ur]  &&&&&&&&\\
&&&&&&&&&&&
}
}
\end{equation*}
Therefore $e(R(\mathbb P_k^1,D))=4$.
\end{proof}

Next, we consider the case the central curve is a $(-2)$-curve and there are two $(-3)$-curves in one branch.
\begin{prop}\label{central 2-3-3 1}
Let $D=2P_0-\sum_{i=1}^3a_iP_i$  be an ample $\mathbb Q$-divisor on $\mathbb P_k^1$, where $a_i\in \mathbb Q_{\ge 0}$ and $P_i$ are distinct points of $\mathbb P_k^1$.
Assume that $a_1\le a_2$, $a_1=\frac{m}{m+1}$,  $a_2=\frac{n}{n+1}$, $\frac{1}{a_3}=[[(2)^a,3, (2)^b,3,(2)^c]]$ for $m,n,a,b,c\in \mathbb Z_{\ge 0}$ and $R(\mathbb P_k^1,D)$ has a rational singularity with
 $e(R(\mathbb P_k^1,D))=4$. 
Then $(a_1,a_2,a_3)$ is  one of the following:
\begin{align*}
\mathrm{(1)}&\ (0,\frac{n}{n+1},\frac{\big((a+1)b + 3a+2\big)c+(2a+2)b+5a+3}{\big((a+2)b+ 3a+5\big)c+(2a+4)b+5a+8})\  \mbox{for}\ n,  a , b,c\ge 0, \\
\mathrm{(2)}&\ (\frac{1}{2},\frac{n}{n+1},\frac{(b +2)c+2b+3}{(2b+ 5)c+4b+8})\ \mbox{for}\ n\ge 1,b\ge 0,c\ge 0, \\
\mathrm{(3)}&\ (\frac{1}{2},\frac{1}{2},\frac{\big((a+1)b + 3a+2\big)c+(2a+2)b+5a+3}{\big((a+2)b+ 3a+5\big)c+(2a+4)b+5a+8}) \ \mbox{for}\ a\ge 1, b\ge 0,c\ge 0, \\
 \mathrm{(4)}&\ (\frac{1}{2},\frac{2}{3},\frac{(2b + 5)c+4b+8}{(3b+ 8)c+6b+13})\ \mbox{for}\ b\ge 0,c\ge 0,\\
 \mathrm{(5)}&\ (\frac{1}{2},\frac{2}{3},\frac{(3b + 8)c+6b+13}{(4b+ 11)c+8b+18})\ \mbox{for}\ b\ge 0, c\ge 0.
\end{align*}
\end{prop}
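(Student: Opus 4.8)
The plan is a case analysis on the triple $(m,n,a)$, carried out exactly as in Propositions~\ref{central 2-3} and \ref{central 2-4}. By Theorem~\ref{dual graph of graded ring} the minimal good resolution $f$ of $\mathrm{Spec}(R(\mathbb P_k^1,D))$ has for its dual graph the central $(-2)$-curve $E_0$, two chains of $(-2)$-curves attached to $E_0$ coming from $P_1$ and $P_2$, and the branch $(2)^a,3,(2)^b,3,(2)^c$ coming from $P_3$; so if $Z$ is the fundamental cycle and $n_{3,a+1},n_{3,a+b+2}$ are the coefficients in $Z$ of the two $(-3)$-curves, Proposition~\ref{Z^2} gives $e(R(\mathbb P_k^1,D))=2+n_{3,a+1}+n_{3,a+b+2}$, so the hypothesis $e=4$ is equivalent to both of these coefficients being $1$. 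Throughout I use Corollary~\ref{n_0=min}, which gives $\mathrm{Coeff}_{E_0}(Z)=\min\{l:\deg[lD]\ge0\}$, Lemma~\ref{formula} to propagate coefficients down the $P_3$-branch, and Theorem~\ref{graded rational singularity} in the form $\deg[lD]\ge-1$ for all $l$.

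The first step is a reduction of the admissible $(m,n,a)$. Since $[[(2)^a,3,(2)^b,3,(2)^c]]\ge2$ precisely when $a=0$ (for $a\ge1$ it equals $2-1/[[(2)^{a-1},3,(2)^b,3,(2)^c]]<2$ by Lemma~\ref{[[a]]>1}), and since $[[(2)^a,3,(2)^b,3,(2)^c]]>[[3,(2)^b,3,(2)^c]]>2$ for $a\ge1$, evaluating $\deg[lD]$ at small $l$ shows, under the rational singularity hypothesis: $\deg[2D]\ge-1$ forces $a=0$ when $m\ge2$; $\deg[3D]\ge-1$ forces $a\le1$ when $m=1$ and $n\ge3$ (as $a_3>\tfrac23$ for $a\ge2$); and $\deg[5D]\ge-1$ rules out $m=1,\ n\ge5,\ a\ge1$ (as $a_3>\tfrac35$ for $a\ge1$). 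What survives is: $m=0$; $m=1$ with $a=0$; $m=n=1$ with $a\ge1$; $m=1,\ n=2$ with $a\in\{1,2,3\}$; $m=1,\ n\in\{3,4\}$ with $a=1$; and $m\ge2$ with $a=0$.

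For the shapes appearing in the statement --- $m=0$, $m=1$ with $a=0$, $m=n=1$ with $a\ge1$, and $m=1,\ n=2$ with $a\in\{1,2\}$ --- I verify rationality and $e=4$ directly. Rationality holds automatically when $m=0$, since then $\deg[lD]\ge0$ for all $l$; otherwise I bound $a_3$ from above with Lemma~\ref{[[a]]<[[b]]} --- $a_3<\tfrac12$ when $a=0$; $a_3<\tfrac{a+b+c+2}{a+b+c+3}$ when $m=n=1,\ a\ge1$, since $[[(2)^a,3,(2)^b,3,(2)^c]]>[[(2)^{a+b+c+2}]]$; and $a_3<\tfrac45$ when $m=1,\ n=2,\ a\in\{1,2\}$, since $[[3,(2)^b,3,(2)^c]]>2$ --- so that $D$ dominates coordinatewise one of the divisors covered by Lemma~\ref{(2,2,n)type}, whence Lemma~\ref{bigger coefficient rationality} applies. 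For $e=4$ I compute $\mathrm{Coeff}_{E_0}(Z)$ (it equals $1,2,2,3,4$ in the five shapes), then run the recursion of Lemma~\ref{formula} down the $P_3$-branch; a short computation shows, uniformly in $b$ and $c$, that the coefficient entering the first $(-3)$-curve $E_{3,a+1}$ is at most $2$, so $n_{3,a+1}=\lceil n_{3,a}/[[3,(2)^b,3,(2)^c]]\rceil=1$ because $[[3,(2)^b,3,(2)^c]]>2$, and every coefficient past $E_{3,a+1}$ is then $1$. Thus both $(-3)$-curves have coefficient $1$ and $e=4$; rewriting $a_3$ through Lemma~\ref{[[2,3,2,3,2]]} yields exactly the displayed forms (1)--(5).

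It remains to exclude the three surviving shapes not in the statement: $m\ge2$ (with $a=0$), $m=1,\ n=2,\ a=3$, and $m=1,\ n\in\{3,4\},\ a=1$. In each of these, Corollary~\ref{n_0=min} forces $\mathrm{Coeff}_{E_0}(Z)$ to be large --- at least $3$ when $m\ge2$ (since $\deg[D]=\deg[2D]=-1$), equal to $6$ when $m=1,\ n=2,\ a=3$, and at least $4$ when $m=1,\ n\in\{3,4\},\ a=1$ --- and then the recursion of Lemma~\ref{formula}, using the estimates $a_3>\tfrac13$ for $m\ge2$ and the interval bounds on $a_3$ from Lemma~\ref{[[a]]<[[b]]} in the other cases, leaves the first $(-3)$-curve of the $P_3$-branch with coefficient $\ge2$ in $Z$; hence $e=-Z^2\ge5$ by Proposition~\ref{Z^2}, contradicting $e=4$. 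The main obstacle is the bookkeeping in the last two paragraphs: because $a_3$ runs over an interval as $b$ and $c$ vary, the continued-fraction estimates from Lemma~\ref{[[a]]<[[b]]} must be sharp enough to keep all the ceiling values occurring in $\deg[lD]$ and in the recursion of Lemma~\ref{formula} constant across each shape --- in particular to pin down $\mathrm{Coeff}_{E_0}(Z)$ uniformly in $b$ and $c$ --- although each individual estimate is an elementary manipulation of Hirzebruch--Jung continued fractions.
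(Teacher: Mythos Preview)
Your approach matches the paper's almost exactly: the same case split on $(m,n,a)$, the same tools (Theorem~\ref{graded rational singularity} for rationality, Lemma~\ref{bigger coefficient rationality} plus Lemma~\ref{(2,2,n)type} for the comparison arguments, Corollary~\ref{n_0=min} plus Lemma~\ref{formula} for the fundamental cycle, Proposition~\ref{Z^2} for the multiplicity), and the same five admissible shapes together with the same three excluded shapes.

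Two slips are worth fixing. First, the displayed inequality ``$[[(2)^a,3,(2)^b,3,(2)^c]]>[[3,(2)^b,3,(2)^c]]>2$ for $a\ge1$'' is backwards: for $a\ge1$ the left side is $<2$ (as you yourself note one line earlier), and by Lemma~\ref{[[a]]<[[b]]}(2) the correct comparison is $[[(2)^a,3,\ldots]]<[[3,\ldots]]$. Second, your three elimination bullets do not cover $m=1,\ n=2,\ a\ge4$: the $\deg[3D]$ bullet requires $n\ge3$ and the $\deg[5D]$ bullet requires $n\ge5$, so as written nothing forces $a\le3$ when $n=2$. The paper handles this in its Case~5(i); the quickest patch in your framework is to note that for $a\ge4$ one has $a_3>4/5$ (since $[[(2)^4,3,\ldots]]<[[(2)^4]]=5/4$), whence $\deg[5D]=10-3-4-\lceil 5a_3\rceil\le-2$. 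With these two corrections your argument is complete and essentially identical to the paper's.
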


\begin{proof}
\noindent
{\bf Case 1.}
We assume that $m=0$.
Then $R(\mathbb P_k^1,D)$ has a rational singularity since 
 $\mathrm{deg} [lD]\ge 0$ for any $l\in\mathbb N$.  
The dual graph of the minimal good resolution of $\mathrm{Spec} (R(\mathbb P_k^1,D))$ is the following:
\begin{eqnarray*}
\scalebox{0.6}{
\xymatrix@C=12pt@R=12pt{
   *++[o][F]{-2}  \ar@{}[d]|(0.8) *{1} \ar@{-}[r] 
 & \cdots \ar@{-}[r]  
 & *++[o][F]{-2}  \ar@{-}[r] \ar@{}[d]|(0.8) *{1} 
 & *++[o][F]{-3}  \ar@{-}[r] \ar@{}[d]|(0.8) *{1} 
 & *++[o][F]{-2}  \ar@{-}[r] \ar@{}[d]|(0.8) *{1} 
 &  \cdots \ar@{-}[r]
 & *++[o][F]{-2}  \ar@{-}[r] \ar@{}[d]|(0.8) *{1}
 & *++[o][F]{-3}  \ar@{-}[r] \ar@{}[d]|(0.8) *{1}
 & *++[o][F]{-2}  \ar@{-}[r] \ar@{}[d]|(0.8) *{1}
 & *++[o][F]{-2}  \ar@{-}[r] \ar@{}[d]|(0.8) *{1}
 & \cdots \ar@{-}[r] 
 &  *++[o][F]{-2}  \ar@{}[d]|(0.8) *{1} \\
&&&&&&&&&&&&
}
}
\end{eqnarray*}
Therefore $e(R(\mathbb P_k^1,D))=4$.

\vskip.3truecm
\noindent
{\bf Case 2.}
We assume that $m=1$ and  $a=0$.
Note that $D\ge  2P_0 -  \frac{1}{2} P_1-\frac{n}{n+1} P_2 -\frac{1}{2}P_3$ by Lemma \ref{[[a]]<[[b]]}. 
Therefore $R(\mathbb P_k^1,D)$ has a rational singularity by Lemma \ref{bigger coefficient rationality} and  Lemma \ref{(2,2,n)type}.
The dual graph of the minimal good resolution of $\mathrm{Spec} (R(\mathbb P_k^1,D))$ is the following:
\begin{eqnarray*}
\scalebox{0.6}{
\xymatrix@C=12pt@R=12pt{
 &
 &
 &
 &
 &
 & 
 &
 & *++[o][F]{-2}  \ar@{-}[d] \ar@{}[r]|(0.5) *{1}
 &
 &
 & 
\\
   *++[o][F]{-2}  \ar@{}[d]|(0.8) *{1} \ar@{-}[r] 
 & \cdots \ar@{-}[r]  
 & *++[o][F]{-2}  \ar@{-}[r] \ar@{}[d]|(0.8) *{1} 
 & *++[o][F]{-3}  \ar@{-}[r] \ar@{}[d]|(0.8) *{1} 
 & *++[o][F]{-2}  \ar@{-}[r] \ar@{}[d]|(0.8) *{1} 
 &  \cdots \ar@{-}[r]
 & *++[o][F]{-2}  \ar@{-}[r] \ar@{}[d]|(0.8) *{1}
 & *++[o][F]{-3}  \ar@{-}[r] \ar@{}[d]|(0.8) *{1}
 & *++[o][F]{-2}  \ar@{-}[r] \ar@{}[d]|(0.8) *{2}
 & *++[o][F]{-2}  \ar@{-}[r] \ar@{}[d]|(0.8) *{2}
 & \cdots \ar@{-}[r] 
 & *++[o][F]{-2}  \ar@{-}[r] \ar@{}[d]|(0.8) *{2}
 &  *++[o][F]{-2}  \ar@{}[d]|(0.8) *{1} \\
&&&&&&&&&&&&&
}
}
\end{eqnarray*}
Therefore $e(R(\mathbb P_k^1,D))=4$.

\vskip.3truecm
\noindent
{\bf Case 3.}
We assume that $m=n=1$ and $a\ge 1$.
Note that $D\ge  2P_0 -  \frac{1}{2} P_1-\frac{1}{2} P_2 -\frac{a+b+c+2}{a+b+c+3}P_3$ by Lemma \ref{[[a]]<[[b]]}. 
Therefore $R(\mathbb P_k^1,D)$ has a rational singularity by Lemma \ref{bigger coefficient rationality} and  Lemma \ref{(2,2,n)type}.
The dual graph of the minimal good resolution of $\mathrm{Spec} (R(\mathbb P_k^1,D))$ is the following:
\begin{eqnarray*}
\scalebox{0.6}{
\xymatrix@C=12pt@R=12pt{
 &
 &
 &
 &
 &
 &
 &
 & 
 & 
 &
 & *++[o][F]{-2}  \ar@{-}[d] \ar@{}[r]|(0.5) *{1}
 &
\\
  *++[o][F]{-2}  \ar@{}[d]|(0.8) *{1} \ar@{-}[r] 
 & \cdots \ar@{-}[r]  
 & *++[o][F]{-2}  \ar@{-}[r] \ar@{}[d]|(0.8) *{1} 
 & *++[o][F]{-3}  \ar@{-}[r] \ar@{}[d]|(0.8) *{1} 
 & *++[o][F]{-2}  \ar@{-}[r] \ar@{}[d]|(0.8) *{1} 
 & \cdots \ar@{-}[r]  
 & *++[o][F]{-2}  \ar@{-}[r] \ar@{}[d]|(0.8) *{1} 
 & *++[o][F]{-3}  \ar@{-}[r] \ar@{}[d]|(0.8) *{1} 
 & *++[o][F]{-2}  \ar@{-}[r] \ar@{}[d]|(0.8) *{2}
 & \cdots \ar@{-}[r] 
 & *++[o][F]{-2}  \ar@{-}[r] \ar@{}[d]|(0.8) *{2} 
 & *++[o][F]{-2}  \ar@{-}[r] \ar@{}[d]|(0.8) *{2} 
 & *++[o][F]{-2}  \ar@{}[d]|(0.8) *{1} \\
&&&&&&&&&&&&&
}
}
\end{eqnarray*}
Therefore $e(R(\mathbb P_k^1,D))=4$.

\vskip.3truecm
\noindent
{\bf Case 4.}
We assume that $m=1$, $n=2$ and $1\le a\le 3$.
Note that $D\ge  2P_0 -  \frac{1}{2} P_1-\frac{2}{3} P_2 -\frac{4}{5}P_3$ by Lemma \ref{[[a]]<[[b]]}. 
Therefore $R(\mathbb P_k^1,D)$ has a rational singularity by Lemma \ref{bigger coefficient rationality} and  Lemma \ref{(2,2,n)type}.
The dual graph of the minimal good resolution of $\mathrm{Spec} (R(\mathbb P_k^1,D))$ is the following:
\begin{eqnarray*}
\scalebox{0.6}{
\xymatrix@C=12pt@R=12pt{
 &
 &
 &
 &
 &
 &
 &
 &
 & *++[o][F]{-2}  \ar@{-}[d] \ar@{}[r]|(0.5) *{2}
 &
\\
  *++[o][F]{-2}  \ar@{}[d]|(0.8) *{1} \ar@{-}[r] 
 & \cdots \ar@{-}[r]  
 & *++[o][F]{-2}  \ar@{-}[r] \ar@{}[d]|(0.8) *{1} 
 & *++[o][F]{-3}  \ar@{-}[r] \ar@{}[d]|(0.8) *{1} 
 & *++[o][F]{-2}  \ar@{-}[r] \ar@{}[d]|(0.8) *{1} 
 & *++[o][F]{-2}  \ar@{-}[r] \ar@{}[d]|(0.8) *{1} 
 & *++[o][F]{-2}  \ar@{-}[r] \ar@{}[d]|(0.8) *{1} 
 & *++[o][F]{-3}  \ar@{-}[r] \ar@{}[d]|(0.8) *{1} 
 & *++[o][F]{-2}  \ar@{-}[r] \ar@{}[d]|(0.8) *{2}
 & *++[o][F]{-2}  \ar@{-}[r] \ar@{}[d]|(0.8) *{3}
 & *++[o][F]{-2}  \ar@{-}[r] \ar@{}[d]|(0.8) *{2} 
 & *++[o][F]{-2}  \ar@{}[d]|(0.8) *{1} \\
&&&&&&&&&&&&
}
}
\end{eqnarray*}
\begin{eqnarray*}
\scalebox{0.6}{
\xymatrix@C=12pt@R=12pt{
 &
 &
 &
 &
 &
 &
 &
 & 
 &
 & *++[o][F]{-2}  \ar@{-}[d] \ar@{}[r]|(0.5) *{2}
 &
\\
  *++[o][F]{-2}  \ar@{}[d]|(0.8) *{1} \ar@{-}[r] 
 & \cdots \ar@{-}[r]  
 & *++[o][F]{-2}  \ar@{-}[r] \ar@{}[d]|(0.8) *{1} 
 & *++[o][F]{-3}  \ar@{-}[r] \ar@{}[d]|(0.8) *{1} 
 & *++[o][F]{-2}  \ar@{-}[r] \ar@{}[d]|(0.8) *{1} 
 & \cdots \ar@{-}[r]  
 & *++[o][F]{-2}  \ar@{-}[r] \ar@{}[d]|(0.8) *{1} 
 & *++[o][F]{-3}  \ar@{-}[r] \ar@{}[d]|(0.8) *{1} 
 & *++[o][F]{-2}  \ar@{-}[r] \ar@{}[d]|(0.8) *{2}
 & *++[o][F]{-2}  \ar@{-}[r] \ar@{}[d]|(0.8) *{3} 
 & *++[o][F]{-2}  \ar@{-}[r] \ar@{}[d]|(0.8) *{4} 
 & *++[o][F]{-2}  \ar@{-}[r] \ar@{}[d]|(0.8) *{3} 
 & *++[o][F]{-2}  \ar@{}[d]|(0.8) *{2} \\
&&&&&&&&&&&&&
}
}
\end{eqnarray*}
\begin{eqnarray*}
\scalebox{0.6}{
\xymatrix@C=12pt@R=12pt{
 &
 &
 &
 &
 &
 &
 &
 & 
 & 
 &
 & *++[o][F]{-2}  \ar@{-}[d] \ar@{}[r]|(0.5) *{3}
 &
\\
  *++[o][F]{-2}  \ar@{}[d]|(0.8) *{1} \ar@{-}[r] 
 & \cdots \ar@{-}[r]  
 & *++[o][F]{-2}  \ar@{-}[r] \ar@{}[d]|(0.8) *{1} 
 & *++[o][F]{-3}  \ar@{-}[r] \ar@{}[d]|(0.8) *{1} 
 & *++[o][F]{-2}  \ar@{-}[r] \ar@{}[d]|(0.8) *{2} 
 & \cdots \ar@{-}[r]  
 & *++[o][F]{-2}  \ar@{-}[r] \ar@{}[d]|(0.8) *{2} 
 & *++[o][F]{-3}  \ar@{-}[r] \ar@{}[d]|(0.8) *{2}
 & *++[o][F]{-2}  \ar@{-}[r] \ar@{}[d]|(0.8) *{3}
 & *++[o][F]{-2}  \ar@{-}[r] \ar@{}[d]|(0.8) *{4} 
 & *++[o][F]{-2}  \ar@{-}[r] \ar@{}[d]|(0.8) *{5} 
 & *++[o][F]{-2}  \ar@{-}[r] \ar@{}[d]|(0.8) *{6} 
 & *++[o][F]{-2}  \ar@{-}[r] \ar@{}[d]|(0.8) *{4} 
 & *++[o][F]{-2}  \ar@{}[d]|(0.8) *{2} \\
&&&&&&&&&&&&&&
}
}
\end{eqnarray*}
Therefore $e(R(\mathbb P_k^1,D))=4$ when $a=1,2$ and $e(R(\mathbb P_k^1,D))=5$ when $a=3$.

\vskip.3truecm
\noindent
{\bf Case 5.}
Assume one of the following holds:
\begin{enumerate}
\item[(i)]
$m=1$, $n=2$ and $a\ge 4$.
\item[(ii)]
$m=1$, $n\ge 3$ and  $a\ge 2$.
\item[(iii)]
$m\ge 2$ and $a\ge 1$.
\end{enumerate}
Then $R(\mathbb{P}^1_k, D)$ does not have a rational singularity because $\deg[5D] \le -2$ in case (i), $\deg[3D] \le -2$ in case (ii), and $\deg[2D] \le -2$ in case (iii).

\vskip.3truecm
\noindent
{\bf Case 6.}
Assume one of the following holds:
\begin{enumerate}
\item[(i)]
$m=1$,  $n\ge 3$ and $a=1$.
\item[(ii)]
$m\ge 2$ and $a=0$.
\end{enumerate}
In this case, we have  $e(R(\mathbb P_k^1,D))\ge 5$.
We consider only case (i)  since we can apply the same argument to case (ii).
Let $E_0$ be the central curve of the minimal good resolution of $\mathrm{Spec} (R(\mathbb P_k^1,D))$ and $E_1$,  $E_2$ be the $(-3)$-curves in its dual graph.
Let $Z$ be the fundamental cycle of the minimal good resolution of $\mathrm{Spec} (R(\mathbb P_k^1,D))$.
We have $\mathrm{Coeff}_{E_0}(Z)\ge 4$ by Corollary \ref{n_0=min}.
Therefore  $\mathrm{Coeff}_{E_1}(Z)+\mathrm{Coeff}_{E_2}(Z)\ge 3$ by Lemma \ref{formula}.
Hence $e(R(\mathbb P_k^1,D))\ge 5$.

\end{proof}

Finally, we consider the case the central curve is a $(-2)$-curve and there is  one $(-3)$-curve in one branch.
\begin{prop}\label{central 2-3-3 2}
Let $D=2P_0-\sum_{i=1}^3a_iP_i$  be an ample $\mathbb Q$-divisor on $\mathbb P_k^1$, where $a_i\in \mathbb Q_{\ge 0}$ and $P_i$ are distinct points of $\mathbb P_k^1$.
Assume that $a_2\le a_3$, $a_1=\frac{m}{m+1}$,  $\frac{1}{a_2}=[[(2)^a,3, (2)^b]]$, $\frac{1}{a_3}=[[(2)^c,3, (2)^d]]$ for $m,a,b,c,d\in \mathbb Z_{\ge 0}$ and $R(\mathbb P_k^1,D)$ has a rational singularity with
 $e(R(\mathbb P_k^1,D))=4$. 
Then $(a_1,a_2,a_3)$ is  one of the following:
\begin{align*}
\mathrm{(1)}&\ (0,\frac{(a+1)b+2a+1}{(a+2)b+2a+3},\frac{(c+1)d+2c+1}{(c+2)d+2c+3}) & &\mbox{for}\ a\ge 0, b\ge 0,c\ge 0,d\ge 0 \\
\mathrm{(2)}&\ (\frac{m}{m+1},\frac{b+1}{2b+3},\frac{d+1}{2d+3}) & &\mbox{for}\ m\ge 1,b\ge 0,d\ge 0 \\
\mathrm{(3)}&\ (\frac{1}{2},\frac{b+1}{2b+3},\frac{(c+1)d+2c+1}{(c+2)d+2c+3}) & &\mbox{for}\  b\ge 0,c\ge 1,d\ge 0 \\
\mathrm{(4)}&\ (\frac{1}{2},\frac{2b+3}{3b+5},\frac{2d+3}{3d+5})& &\mbox{for}\ b\ge 0,d\ge 0,\\
\mathrm{(5)}&\ (\frac{1}{2},\frac{3}{5},\frac{3d+5}{4d+7})& & \mbox{for}\ d\ge 0,\\
\mathrm{(6)}&\ (\frac{1}{2},\frac{3}{5},\frac{4d+7}{5d+9})& &  \mbox{for}\ d\ge 0,\\
\mathrm{(7)}&\ (\frac{2}{3},\frac{1}{3},\frac{(c+1)d+2c+1}{(c+2)d+2c+3})& & \mbox{for}\ c\ge1 , d\ge 0,\\
\mathrm{(8)}&\ (\frac{m}{m+1},\frac{1}{3},\frac{2d+3}{3d+5})& & \mbox{for}\ m\ge 3,d\ge 0.
\end{align*}
\end{prop}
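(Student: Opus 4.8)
The plan is to argue exactly as in the proofs of Propositions~\ref{central 2-3} and~\ref{central 2-3-3 1}. By Lemma~\ref{D form} the divisor $D$ already has the stated shape, so the content of the proposition is a finite case check: one runs through the tuples $(m,a,b,c,d)\in\mathbb Z_{\ge 0}^5$ permitted by $a_2\le a_3$ (which, by Lemma~\ref{[[a]]<[[b]]}, forces $a\le c$), decides in each case whether $R(\mathbb P_k^1,D)$ has a rational singularity, computes $e(R(\mathbb P_k^1,D))$ whenever it does, and retains exactly those tuples with $e(R(\mathbb P_k^1,D))=4$. Throughout I would use Lemma~\ref{[[2,n,2]]} to rewrite $\tfrac1{a_2}=[[(2)^a,3,(2)^b]]=\tfrac{(a+2)b+2a+3}{(a+1)b+2a+1}$, and likewise $\tfrac1{a_3}$, so that the continued-fraction data turn into the explicit rationals displayed in the list; since each branch carries just one $(-3)$-curve, only Lemma~\ref{[[2,n,2]]} (not Lemma~\ref{[[2,3,2,3,2]]}) is needed here.

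I would split into cases according to the sizes of $m$, $a$ and $c$. Two regimes are immediate. If $m=0$, then $a_1=0$ and $\mathrm{deg}[lD]=2l-\lceil la_2\rceil-\lceil la_3\rceil\ge 0$ for every $l$, so $R(\mathbb P_k^1,D)$ has a rational singularity by Theorem~\ref{graded rational singularity}; the dual graph is the central $(-2)$-curve with two chains each containing one $(-3)$-curve, and by Theorem~\ref{dual graph of graded ring}, Lemma~\ref{formula} and Corollary~\ref{n_0=min} the fundamental cycle has all coefficients equal to $1$, so $e=4$ by Proposition~\ref{Z^2}; this gives family~(1). If $m\ge 1$ and $a=c=0$, then $a_2,a_3\le\tfrac12$, so $D\ge 2P_0-\tfrac{m}{m+1}P_1-\tfrac12P_2-\tfrac12P_3$, which has a rational singularity by Lemma~\ref{(2,2,n)type}; hence so does $R(\mathbb P_k^1,D)$ by Lemma~\ref{bigger coefficient rationality}, and Lemma~\ref{formula} again gives $e=4$, producing family~(2). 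The remaining surviving tuples are the ``small-parameter'' ones: $m$ small, together with small $a,c$ and small $b$ or $d$. For these I would establish rationality either by dominating $D$ by a divisor already known to be rational (Lemma~\ref{bigger coefficient rationality} with Lemma~\ref{(2,2,n)type} or a suitable item of Proposition~\ref{central 2-3}), or by checking $\mathrm{deg}[lD]\ge -1$ directly for $l$ up to the period $N$ (the least common multiple of the denominators of the $a_i$) and then extending to all $l$ via $\mathrm{deg}[lD]=\mathrm{deg}[ND]+\mathrm{deg}[(l-N)D]$ once $\mathrm{deg}[ND]\ge 1$; the dual graph together with Lemma~\ref{formula} and Proposition~\ref{Z^2} then pins down the fundamental cycle and $e$, producing families~(3)--(8).

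Finally I would discard the tuples not appearing in the list. A tuple gives a non-rational singularity precisely when $\mathrm{deg}[lD]\le -2$ for some small $l\in\{2,3,5\}$, which is forbidden by Theorem~\ref{graded rational singularity}. A tuple with a rational singularity gives $e\ge 5$ precisely when one of the two $(-3)$-curves has coefficient $\ge 2$ in the fundamental cycle $Z$, because by Proposition~\ref{Z^2} the number $e(R(\mathbb P_k^1,D))$ equals $2$ plus the sum of the coefficients of the two $(-3)$-curves in $Z$. To check that one of these two alternatives occurs whenever the parameters exceed the bounds in families~(1)--(8), I would use Corollary~\ref{n_0=min}, which identifies the central coefficient $n_0=\mathrm{Coeff}_{E_0}(Z)$ with $\min\{n:\mathrm{deg}[nD]\ge 0\}$: large parameters force $n_0$ to be large, and then Lemma~\ref{formula} propagates $n_0$ down each branch, forcing a $(-3)$-curve coefficient to be at least $2$. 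The main obstacle is not any single estimate but the organisation -- proving the case split exhaustive, and pinning down the exact cut-offs in families~(4)--(8): those are the borderline instances where no domination argument applies, so rationality must be verified by hand via the periodicity trick, and they are exactly where $e$ jumps from $4$ to $5$.
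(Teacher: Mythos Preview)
Your approach is essentially the paper's: the same case split on $(m,a,c)$, the same tools (Theorem~\ref{graded rational singularity}, Lemma~\ref{bigger coefficient rationality}, Corollary~\ref{n_0=min}, Lemma~\ref{formula}, Proposition~\ref{Z^2}), and the same mechanism for discarding tuples (either $\mathrm{deg}[lD]\le -2$ for some $l\in\{2,3\}$, or a $(-3)$-coefficient in $Z$ forced to be $\ge 2$). Your handling of $m=0$ and of $m\ge 1,\ a=c=0$ matches the paper exactly.

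Two inaccuracies in your sketch could cause trouble in execution. First, the surviving families beyond (1) and (2) are \emph{not} all ``small-parameter'': family~(3) has $c$ arbitrary, family~(7) has $c$ arbitrary, and family~(8) has $m$ arbitrary. The paper's case split after $a=c=0$ is: for $a=0,\ c\ge 1$ one branches on $m=1$ (family~(3)), $m=2,\ b=0$ (family~(7)), $m\ge 3,\ b=0,\ c=1$ (family~(8)), with $b\ge 1$ or larger $c$ forcing $e\ge 5$; for $a=1$ one needs $m=1$ and $c\le 3$ (families~(4)--(6), with $b=0$ required when $c=2,3$); and $a\ge 2$ or $m\ge 2,\ a\ge 1$ are non-rational. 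Second, the periodicity trick is never needed in this proposition: in every case rationality is established by domination, either by Lemma~\ref{(2,2,n)type} or, for families~(7) and~(8), by item~(9) of Proposition~\ref{central 2-3}(i) (the divisor $2P_0-\tfrac{2}{3}P_1-\tfrac{n}{n+1}P_2-\tfrac{1}{3}P_3$). So your toolbox is right, but the organisation you anticipate is off in these two respects.
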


\begin{proof}
Note that $a\le c$ by Lemma \ref{[[a]]<[[b]]}. \\
\noindent
{\bf Case 1.}
We assume that $m=0$.
Then $R(\mathbb P_k^1,D)$ has a rational singularity since 
 $\mathrm{deg} [lD]\ge 0$ for any $l\in\mathbb N$.  
The dual graph of the minimal good resolution of $\mathrm{Spec} (R(\mathbb P_k^1,D))$ is the following:
\begin{eqnarray*}
\scalebox{0.6}{
\xymatrix@C=12pt@R=12pt{
   *++[o][F]{-2}  \ar@{}[d]|(0.8) *{1} \ar@{-}[r] 
 & \cdots \ar@{-}[r]  
 & *++[o][F]{-2}  \ar@{-}[r] \ar@{}[d]|(0.8) *{1} 
 & *++[o][F]{-3}  \ar@{-}[r] \ar@{}[d]|(0.8) *{1} 
 & *++[o][F]{-2}  \ar@{-}[r] \ar@{}[d]|(0.8) *{1} 
 &  \cdots \ar@{-}[r]
 & *++[o][F]{-2}  \ar@{-}[r] \ar@{}[d]|(0.8) *{1}
 & *++[o][F]{-3}  \ar@{-}[r] \ar@{}[d]|(0.8) *{1}
 & *++[o][F]{-2}  \ar@{-}[r] \ar@{}[d]|(0.8) *{1}
 & \cdots \ar@{-}[r] 
 &  *++[o][F]{-2}  \ar@{}[d]|(0.8) *{1} \\
&&&&&&&&&&&
}
}
\end{eqnarray*}
Therefore $e(R(\mathbb P_k^1,D))=4$.

\vskip.3truecm
\noindent
{\bf Case 2.}
We assume that  $m\ge 1$ and $a=c=0$. 
Note that $D\ge  2P_0 -  \frac{m}{m+1} P_1-\frac{1}{2} P_2 -\frac{1}{2}P_3$ by Lemma \ref{[[a]]<[[b]]}. 
Therefore $R(\mathbb P_k^1,D)$ has a rational singularity by Lemma \ref{bigger coefficient rationality} and  Lemma \ref{(2,2,n)type}.
The dual graph of the minimal good resolution of $\mathrm{Spec} (R(\mathbb P_k^1,D))$ is the following:
\begin{equation*}
\scalebox{0.6}{
\xymatrix@C=12pt@R=12pt{
  *++[o][F]{-2}  \ar@{}[d]|(0.8) *{1} \ar@{-}[r] 
 &  \cdots \ar@{-}[r]
 & *++[o][F]{-2}  \ar@{-}[r] \ar@{}[d]|(0.8) *{1}
 &  *++[o][F]{-3}  \ar@{}[d]|(0.8) *{1} \ar@{-}[dr]\\
&&&& *++[o][F]{-2}  \ar@{-}[r] \ar@{}[d]|(0.8) *{2} & *++[o][F]{-2}  \ar@{-}[r] \ar@{}[d]|(0.8) *{2} & \cdots\ar@{-}[r] & *++[o][F]{-2}  \ar@{-}[r] \ar@{}[d]|(0.8) *{2}& *++[o][F]{-2}  \ar@{}[d]|(0.8) *{1}\\
 *++[o][F]{-2}  \ar@{}[d]|(0.8) *{1} \ar@{-}[r] 
 &  \cdots \ar@{-}[r]
& *++[o][F]{-2}  \ar@{-}[r] \ar@{}[d]|(0.8) *{1}
 &  *++[o][F]{-3}  \ar@{}[d]|(0.8) *{1} \ar@{-}[ur] &&&&&& \\
&&&&&&&&&
}
}
\end{equation*}
Therefore $e(R(\mathbb P_k^1,D))=4$.

\vskip.3truecm
\noindent
{\bf Case 3.}
We assume that $m=1$, $a=0$ and $c\ge 1$. 
Note that $D\ge  2P_0 -  \frac{1}{2} P_1-\frac{1}{2} P_2 -\frac{c+d+1}{c+d+2}P_3$ by Lemma \ref{[[a]]<[[b]]}. 
Therefore $R(\mathbb P_k^1,D)$ has a rational singularity by Lemma \ref{bigger coefficient rationality} and  Lemma \ref{(2,2,n)type}.
The dual graph of the minimal good resolution of $\mathrm{Spec} (R(\mathbb P_k^1,D))$ is the following:
\begin{eqnarray*}
\scalebox{0.6}{
\xymatrix@C=12pt@R=12pt{
 &
 &
 &
 & *++[o][F]{-2}  \ar@{-}[d] \ar@{}[r]|(0.5) *{1}
 &
 &
 & 
\\
   *++[o][F]{-2}  \ar@{}[d]|(0.8) *{1} \ar@{-}[r] 
 &  \cdots \ar@{-}[r]
 & *++[o][F]{-2}  \ar@{-}[r] \ar@{}[d]|(0.8) *{1}
 & *++[o][F]{-3}  \ar@{-}[r] \ar@{}[d]|(0.8) *{1}
 & *++[o][F]{-2}  \ar@{-}[r] \ar@{}[d]|(0.8) *{2}
 & *++[o][F]{-2}  \ar@{-}[r] \ar@{}[d]|(0.8) *{2}
 & \cdots \ar@{-}[r] 
 & *++[o][F]{-2}  \ar@{-}[r] \ar@{}[d]|(0.8) *{2}
 & *++[o][F]{-3}  \ar@{-}[r] \ar@{}[d]|(0.8) *{1}
 & *++[o][F]{-2}  \ar@{-}[r] \ar@{}[d]|(0.8) *{1}
 & \cdots \ar@{-}[r] 
 &  *++[o][F]{-2}  \ar@{}[d]|(0.8) *{1} \\
&&&&&&&&&&&&
}
}
\end{eqnarray*}
Therefore $e(R(\mathbb P_k^1,D))=4$.

\vskip.3truecm
\noindent
{\bf Case 4.}
We assume that  $m=1$, $a=1$ and $1\le c\le 3$. 
Note that $D\ge  2P_0 -  \frac{1}{2} P_1-\frac{2}{3} P_2 -\frac{4}{5}P_3$ by Lemma \ref{[[a]]<[[b]]}. 
Therefore $R(\mathbb P_k^1,D)$ has a rational singularity by Lemma \ref{bigger coefficient rationality} and  Lemma \ref{(2,2,n)type}.
The dual graphs of the minimal good resolution of $\mathrm{Spec} (R(\mathbb P_k^1,D))$ are the following:
\begin{eqnarray*}
\scalebox{0.6}{
\xymatrix@C=12pt@R=12pt{
 &
 &
 &
 &
 & *++[o][F]{-2}  \ar@{-}[d] \ar@{}[r]|(0.5) *{2}
 &
 &
 & 
\\
   *++[o][F]{-2}  \ar@{}[d]|(0.8) *{1} \ar@{-}[r] 
 &  \cdots \ar@{-}[r]
 & *++[o][F]{-2}  \ar@{-}[r] \ar@{}[d]|(0.8) *{1}
 & *++[o][F]{-3}  \ar@{-}[r] \ar@{}[d]|(0.8) *{1}
 & *++[o][F]{-2}  \ar@{-}[r] \ar@{}[d]|(0.8) *{2}
 & *++[o][F]{-2}  \ar@{-}[r] \ar@{}[d]|(0.8) *{3}
 & *++[o][F]{-2}  \ar@{-}[r] \ar@{}[d]|(0.8) *{2}
 & *++[o][F]{-3}  \ar@{-}[r] \ar@{}[d]|(0.8) *{1}
 & *++[o][F]{-2}  \ar@{-}[r] \ar@{}[d]|(0.8) *{1}
 & \cdots \ar@{-}[r] 
 &  *++[o][F]{-2}  \ar@{}[d]|(0.8) *{1} \\
&&&&&&&&&&&
}
}
\end{eqnarray*}
\begin{eqnarray*}
\scalebox{0.6}{
\xymatrix@C=12pt@R=12pt{
 &
 &
 &
 &
 &
 & *++[o][F]{-2}  \ar@{-}[d] \ar@{}[r]|(0.5) *{2}
 &
 &
 & 
\\
   *++[o][F]{-2}  \ar@{}[d]|(0.8) *{1} \ar@{-}[r] 
 & *++[o][F]{-2}  \ar@{-}[r] \ar@{}[d]|(0.8) *{2}
 &  \cdots \ar@{-}[r]
 & *++[o][F]{-2}  \ar@{-}[r] \ar@{}[d]|(0.8) *{2}
 & *++[o][F]{-3}  \ar@{-}[r] \ar@{}[d]|(0.8) *{2}
 & *++[o][F]{-2}  \ar@{-}[r] \ar@{}[d]|(0.8) *{3}
 & *++[o][F]{-2}  \ar@{-}[r] \ar@{}[d]|(0.8) *{4}
 & *++[o][F]{-2}  \ar@{-}[r] \ar@{}[d]|(0.8) *{3}
 & *++[o][F]{-2}  \ar@{-}[r] \ar@{}[d]|(0.8) *{2}
 & *++[o][F]{-3}  \ar@{-}[r] \ar@{}[d]|(0.8) *{1}
 & *++[o][F]{-2}  \ar@{-}[r] \ar@{}[d]|(0.8) *{1}
 & \cdots \ar@{-}[r] 
 &  *++[o][F]{-2}  \ar@{}[d]|(0.8) *{1} \\
&&&&&&&&&&&&&
}
}
\end{eqnarray*}
\begin{eqnarray*}
\scalebox{0.6}{
\xymatrix@C=12pt@R=12pt{
 &
 &
 &
 &
 &
 & *++[o][F]{-2}  \ar@{-}[d] \ar@{}[r]|(0.5) *{3}
 &
 &
 & 
\\
   *++[o][F]{-2}  \ar@{}[d]|(0.8) *{1} \ar@{-}[r] 
 & *++[o][F]{-2}  \ar@{-}[r] \ar@{}[d]|(0.8) *{2}
 &  \cdots \ar@{-}[r]
 & *++[o][F]{-2}  \ar@{-}[r] \ar@{}[d]|(0.8) *{2}
 & *++[o][F]{-3}  \ar@{-}[r] \ar@{}[d]|(0.8) *{2}
 & *++[o][F]{-2}  \ar@{-}[r] \ar@{}[d]|(0.8) *{4}
 & *++[o][F]{-2}  \ar@{-}[r] \ar@{}[d]|(0.8) *{6}
 & *++[o][F]{-2}  \ar@{-}[r] \ar@{}[d]|(0.8) *{5}
 & *++[o][F]{-2}  \ar@{-}[r] \ar@{}[d]|(0.8) *{4}
 & *++[o][F]{-2}  \ar@{-}[r] \ar@{}[d]|(0.8) *{3}
 & *++[o][F]{-3}  \ar@{-}[r] \ar@{}[d]|(0.8) *{2}
 & *++[o][F]{-2}  \ar@{-}[r] \ar@{}[d]|(0.8) *{2}
 & \cdots \ar@{-}[r] 
 & *++[o][F]{-2}  \ar@{-}[r] \ar@{}[d]|(0.8) *{2}
 &  *++[o][F]{-2}  \ar@{}[d]|(0.8) *{1} \\
&&&&&&&&&&&&&&&
}
}
\end{eqnarray*}
\begin{eqnarray*}
\scalebox{0.6}{
\xymatrix@C=12pt@R=12pt{
 &
 & *++[o][F]{-2}  \ar@{-}[d] \ar@{}[r]|(0.5) *{2}
 &
 &
 & 
\\
     *++[o][F]{-3}  \ar@{}[d]|(0.8) *{1} \ar@{-}[r] 
 & *++[o][F]{-2}  \ar@{-}[r] \ar@{}[d]|(0.8) *{3}
 & *++[o][F]{-2}  \ar@{-}[r] \ar@{}[d]|(0.8) *{4}
 & *++[o][F]{-2}  \ar@{-}[r] \ar@{}[d]|(0.8) *{3}
 & *++[o][F]{-2}  \ar@{-}[r] \ar@{}[d]|(0.8) *{2}
 & *++[o][F]{-3}  \ar@{-}[r] \ar@{}[d]|(0.8) *{1}
 & *++[o][F]{-2}  \ar@{-}[r] \ar@{}[d]|(0.8) *{1}
 & \cdots \ar@{-}[r] 
 &  *++[o][F]{-2}  \ar@{}[d]|(0.8) *{1} \\
&&&&&&&&&&
}
}
\end{eqnarray*}
\begin{eqnarray*}
\scalebox{0.6}{
\xymatrix@C=12pt@R=12pt{
 &
 & *++[o][F]{-2}  \ar@{-}[d] \ar@{}[r]|(0.5) *{3}
 &
 &
 & 
\\
     *++[o][F]{-3}  \ar@{}[d]|(0.8) *{1} \ar@{-}[r] 
 & *++[o][F]{-2}  \ar@{-}[r] \ar@{}[d]|(0.8) *{3}
 & *++[o][F]{-2}  \ar@{-}[r] \ar@{}[d]|(0.8) *{5}
 & *++[o][F]{-2}  \ar@{-}[r] \ar@{}[d]|(0.8) *{4}
 & *++[o][F]{-2}  \ar@{-}[r] \ar@{}[d]|(0.8) *{3}
 & *++[o][F]{-2}  \ar@{-}[r] \ar@{}[d]|(0.8) *{2}
 & *++[o][F]{-3}  \ar@{-}[r] \ar@{}[d]|(0.8) *{1}
 & *++[o][F]{-2}  \ar@{-}[r] \ar@{}[d]|(0.8) *{1}
 & \cdots \ar@{-}[r] 
 &  *++[o][F]{-2}  \ar@{}[d]|(0.8) *{1} \\
&&&&&&&&&&
}
}
\end{eqnarray*}
Therefore $e(R(\mathbb P_k^1,D))=4$ when $c=1$ or $c=2,3$ and $b=0$,
$e(R(\mathbb P_k^1,D))=5$ when $c=2$ and $b\ge 1$ and
$e(R(\mathbb P_k^1,D))=6$ when $c=3$ and $b\ge 1$.

\vskip.3truecm
\noindent
{\bf Case 5.}
Assume one of the following holds:
\begin{enumerate}
\item[(i)]
$m=1$, $a=1$ and $c\ge 4$. 
\item[(ii)]
$m\ge 2$, $a=0$,  $b\ge 1$ and $c\ge 1$. 
\item[(iii)]
$m\ge 3$, $a=b=0$ and $c\ge 2$. 
\end{enumerate}
In this case, we have  $e(R(\mathbb P_k^1,D))\ge 5$.
We consider only case (i)  since we can apply the same argument to cases (ii) and (iii).
Let $E_0$ be the central curve of the minimal good resolution of $\mathrm{Spec} (R(\mathbb P_k^1,D))$ and $E_1$  be the $(-3)$-curve in the branch corresponding to $P_2$ in its dual graph.
Let $Z$ be the fundamental cycle of the minimal good resolution of $\mathrm{Spec} (R(\mathbb P_k^1,D))$.
We have $\mathrm{Coeff}_{E_0}(Z)\ge 6$ by Corollary \ref{n_0=min}.
By Lemma \ref{formula}, we have $\mathrm{Coeff}_{E_1}(Z)\ge 2$.
Hence $e(R(\mathbb P_k^1,D))\ge 5$.

\vskip.3truecm
\noindent
{\bf Case 6.}
Assume one of the following holds:
\begin{enumerate}
\item[(i)]
$m\ge 1$, $a\ge 2$ and $c\ge 2$.
\item[(ii)]
$m\ge 2$, $a\ge 1$ and $c\ge 1$.
\end{enumerate}
Then $R(\mathbb{P}^1_k, D)$ does not have a rational singularity because $\deg[3D] \le -2$ in case (i), and $\deg[2D] \le -2$ in case (ii).

\vskip.3truecm
\noindent
{\bf Case 7.}
We assume that  $m=2$,  $a=b=0$  and $c\ge 1$.
Note that $D\ge  2P_0 -  \frac{2}{3} P_1-\frac{1}{3} P_2 -\frac{c+d+1}{c+d+2}P_3$ by Lemma \ref{[[a]]<[[b]]}. 
Therefore $R(\mathbb P_k^1,D)$ has a rational singularity by Lemma \ref{bigger coefficient rationality} and  Proposition \ref{central 2-3}.(i).(9).
The dual graph of the minimal good resolution of $\mathrm{Spec} (R(\mathbb P_k^1,D))$ is the following:
\begin{eqnarray*}
\scalebox{0.6}{
\xymatrix@C=12pt@R=12pt{
& *++[o][F]{-2}  \ar@{-}[d] \ar@{}[r]|(0.5) *{1}
 &
 &
 & 
\\
 & *++[o][F]{-2}  \ar@{-}[d] \ar@{}[r]|(0.5) *{2}
 &
 &
 & 
\\
   *++[o][F]{-3}  \ar@{}[d]|(0.8) *{1} \ar@{-}[r] 
 & *++[o][F]{-2}  \ar@{-}[r] \ar@{}[d]|(0.8) *{3}
 & *++[o][F]{-2}  \ar@{-}[r] \ar@{}[d]|(0.8) *{3}
 & \cdots \ar@{-}[r] 
 & *++[o][F]{-2}  \ar@{-}[r] \ar@{}[d]|(0.8) *{3}
 & *++[o][F]{-2}  \ar@{-}[r] \ar@{}[d]|(0.8) *{2}
 & *++[o][F]{-3}  \ar@{-}[r] \ar@{}[d]|(0.8) *{1}
 & *++[o][F]{-2}  \ar@{-}[r] \ar@{}[d]|(0.8) *{1}
 & \cdots \ar@{-}[r] 
 &  *++[o][F]{-2}  \ar@{}[d]|(0.8) *{1} \\
&&&&&&&&&&
}
}
\end{eqnarray*}
Therefore $e(R(\mathbb P_k^1,D))=4$.

\vskip.3truecm
\noindent
{\bf Case 8.}
We assume that $m\ge 3$, $a=b=0$ and $c=1$. 
Note that $D\ge  2P_0 -  \frac{m}{m+1} P_1-\frac{1}{3} P_2 -\frac{2}{3}P_3$ by Lemma \ref{[[a]]<[[b]]}. 
Therefore $R(\mathbb P_k^1,D)$ has a rational singularity by Lemma \ref{bigger coefficient rationality} and  Proposition \ref{central 2-3}.(i).(9).
The dual graph of the minimal good resolution of $\mathrm{Spec} (R(\mathbb P_k^1,D))$ is the following:
\begin{equation*}
\scalebox{0.6}{
\xymatrix@C=12pt@R=12pt{
  *++[o][F]{-2}  \ar@{}[d]|(0.8) *{1} \ar@{-}[r] 
 & *++[o][F]{-2}  \ar@{-}[r] \ar@{}[d]|(0.8) *{2}
 & *++[o][F]{-2}  \ar@{-}[r] \ar@{}[d]|(0.8) *{3}
 &  \cdots \ar@{-}[r]
 &  *++[o][F]{-2}  \ar@{}[d]|(0.8) *{3} \ar@{-}[dr]\\
&&&&& *++[o][F]{-2}  \ar@{-}[r] \ar@{}[d]|(0.8) *{3} & *++[o][F]{-2}  \ar@{-}[r] \ar@{}[d]|(0.8) *{2} & *++[o][F]{-3}  \ar@{-}[r] \ar@{}[d]|(0.8) *{1}  & *++[o][F]{-2}  \ar@{-}[r] \ar@{}[d]|(0.8) *{1} & \cdots\ar@{-}[r] & *++[o][F]{-2}  \ar@{}[d]|(0.8) *{1}\\
 &
 & 
 & 
 &  *++[o][F]{-3}  \ar@{}[d]|(0.8) *{1} \ar@{-}[ur] &&&&&&\\
&&&&&&&&&&&
}
}
\end{equation*}
Therefore $e(R(\mathbb P_k^1,D))=4$.

\end{proof}

\subsection{Classification of $R(\mathbb P_k^1,D)$ which is a rational triple point and rational  fourth point}
In this subsection, we summarize our results of this section in the following theorem.
\begin{thm}\label{classification summarize}
Let $D=sP_0-\sum_{i=1}^ra_iP_i$  be an ample $\mathbb Q$-divisor on $\mathbb P_k^1$, where $s\in \mathbb N$ and $a_i\in \mathbb Q$ with $0\le a_i<1$ and $P_i$ are distinct points of $\mathbb P_k^1$.
Assume that $R(\mathbb P_k^1,D)$ has a rational singularity.
Suppose if $T(\frac{1}{a_i})=T(\frac{1}{a_j})$ for $i<j$, then $a_i\le a_j$, and if $T(\frac{1}{a_i})=\emptyset$ and $T(\frac{1}{a_j})\not=\emptyset$, then $i<j$, where $T(*)$ is defined in Definition \ref{T frac}.
\vskip.2truecm
\noindent
$(1)$ If  $e(R(\mathbb P_k^1,D))=3$, then $(s,a_1,\dots,a_r)$ is  one of the following:
Here, $n$, $a$, $b$, $c$ are any non-negative integers.
{\rm
\begin{multienumerate}
\mitemxx{$(3,\frac{a}{a+1},\frac{b}{b+1},\frac{c}{c+1})$,}{$(2,0,\frac{n}{n+1},\frac{(a+1)b+2a+1}{(a+2)b+2a+3})$,}
\mitemxx{$(2,\frac{1}{2},\frac{n+1}{n+2},\frac{b+1}{2b+3})$,}{$(2,\frac{1}{2},\frac{1}{2},\frac{(a+2)b+2a+3}{(a+3)b+2a+5})$,}
\mitemxxx{$(2,\frac{1}{2},\frac{2}{3},\frac{2b+3}{3b+5})$,}{$(2,\frac{1}{2},\frac{2}{3},\frac{3b+5}{4b+7})$,}{$(2,\frac{1}{2},\frac{2}{3},\frac{7}{9})$,}
\mitemxxx{$(2,\frac{1}{2},\frac{3}{4},\frac{3}{5})$,}{$(2,\frac{1}{2},\frac{4}{5},\frac{3}{5})$,}{$(2,\frac{2}{3},\frac{n+2}{n+3},\frac{1}{3})$,}
\end{multienumerate}
}
\vskip.2truecm
\noindent
$(2)$ If  $e(R(\mathbb P_k^1,D))=4$, then $(s,a_1,\dots,a_r)$ is  one of the following:
Here, $m$, $n$, $a$, $b$, $c$, $d$ are any non-negative integers.
{\rm
\begin{multienumerate}
 \mitemxxx{$(3,\frac{1}{2},\frac{1}{2},\frac{c}{c+1},\frac{d}{d+1})$,}{$(2,\frac{1}{2},\frac{2}{3},\frac{4b+11}{5b+14})$,}{$(2,\frac{1}{2},\frac{3}{4},\frac{2b+5}{3b+8})$,}
 \mitemxxx{$(2,\frac{1}{2},\frac{4}{5},\frac{2b+5}{3b+8})$,}{$(2,\frac{1}{2},\frac{5}{6},\frac{3}{5})$,}{$(2,\frac{1}{2},\frac{6}{7},\frac{3}{5})$,}
 \mitemxxx{$(2,\frac{2}{3},\frac{2}{3},\frac{b+2}{2b+5})$,}{$(2,\frac{2}{3},\frac{3}{4},\frac{b+2}{2b+5})$,}{$(2,\frac{2}{3},\frac{4}{5},\frac{2}{5})$,}
 \mitemxxx{$(2,\frac{3}{4},\frac{3}{4},\frac{1}{3})$,}{$(2,\frac{3}{4},\frac{4}{5},\frac{1}{3})$,}{$(2,\frac{3}{4},\frac{5}{6},\frac{1}{3})$,}
 \mitemxx{$(4,\frac{a}{a+1},\frac{b}{b+1},\frac{c}{c+1},\frac{d}{d+1})$,}{$(2,0,\frac{n}{n+1},\frac{(2a+1)b+3a+1}{(2a+3)b+3a+4})$,}
 \mitemxx{$(2,\frac{1}{2},\frac{n+1}{n+2},\frac{b+1}{3b+4})$,}{$(2,\frac{1}{2},\frac{1}{2},\frac{(2a+3)b+3a+4}{(2a+5)b+3a+7})$,}
 \mitemxxx{$(2,\frac{1}{2},\frac{2}{3},\frac{3b+4}{5b+7})$,}{$(2,\frac{1}{2},\frac{2}{3},\frac{5b+7}{7b+10})$,}{$(2,\frac{1}{2},\frac{2}{3},\frac{7b+10}{9b+13})$,}
 \mitemxxx{$(2,\frac{1}{2},\frac{3}{4},\frac{3b+4}{5b+7})$,}{$(2,\frac{1}{2},\frac{4}{5},\frac{3b+4}{5b+7})$,}{$(2,\frac{1}{2},\frac{5}{6},\frac{4}{7})$,}
 \mitemxxx{$(2,\frac{1}{2},\frac{6}{7},\frac{4}{7})$,}{$(2,\frac{2}{3},\frac{n+2}{n+3},\frac{b+1}{3b+4})$,}{$(2,\frac{3}{4},\frac{n+3}{n+4},\frac{1}{4})$,}
 \mitemx{$(3,\frac{m}{m+1},\frac{n}{n+1},\frac{(a+1)b+2a+1}{(a+2)b+2a+3})$,}
\mitemx{$(2,0,\frac{n}{n+1},\frac{\big((a+1)b + 3a+2\big)c+(2a+2)b+5a+3}{\big((a+2)b+ 3a+5\big)c+(2a+4)b+5a+8})$,}
 \mitemx{$(2,\frac{1}{2},\frac{n+1}{n+2},\frac{(b +2)c+2b+3}{(2b+ 5)c+4b+8})$,}
\mitemx{$(2,\frac{1}{2},\frac{1}{2},\frac{\big((a+2)b + 3a+5\big)c+(2a+4)b+5a+8}{\big((a+3)b+ 3a+8\big)c+(2a+6)b+5a+13})$,}
 \mitemxx{$(2,\frac{1}{2},\frac{2}{3},\frac{(2b + 5)c+4b+8}{(3b+ 8)c+6b+13})$,}{$(2,\frac{1}{2},\frac{2}{3},\frac{(3b + 8)c+6b+13}{(4b+ 11)c+8b+18})$,}
 \mitemxx{$(2,0,\frac{(a+1)b+2a+1}{(a+2)b+2a+3},\frac{(c+1)d+2c+1}{(c+2)d+2c+3})$,}{$(2,\frac{m+1}{m+2},\frac{b+1}{2b+3},\frac{d+1}{2d+3})$,}
 \mitemxx{$(2,\frac{1}{2},\frac{b+1}{2b+3},\frac{(c+2)d+2c+3}{(c+3)d+2c+5})$,}{$(2,\frac{1}{2},\frac{2b+3}{3b+5},\frac{2d+3}{3d+5})$,}
\mitemxx{$(2,\frac{1}{2},\frac{3}{5},\frac{3d+5}{4d+7})$,}{$(2,\frac{1}{2},\frac{3}{5},\frac{4d+7}{5d+9})$,}
\mitemxx{$(2,\frac{2}{3},\frac{1}{3},\frac{(c+2)d+2c+3}{(c+3)d+2c+5})$,}{$(2,\frac{m+3}{m+4},\frac{1}{3},\frac{2d+3}{3d+5})$.}
\end{multienumerate}
}

\end{thm}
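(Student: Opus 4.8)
The proof is an assembly of the classification results of Sections~5.2--5.4 together with Lemma~\ref{D form}, which lists all admissible shapes of the dual graph and of $D$ once the multiplicity is fixed; no new idea is needed beyond careful collation.

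First I would normalize $R=R(\mathbb P_k^1,D)$ to the form $D=sP_0-\sum_{i=1}^r\frac{c_i}{d_i}P_i$ with $0<c_i<d_i$ (a coefficient $a_i=0$ simply meaning that $P_i$ does not occur); by Theorem~\ref{DP const} and Remark~\ref{ample divisor on curve}(3) every ring in question admits such a presentation, and the hypothesis ordering the $a_i$ in terms of $T(1/a_i)$ merely selects one presentation among those that differ by a permutation of the $P_i$, so it makes the displayed tuple well defined (monotonicity within a fixed value of $T$ being controlled by Lemma~\ref{[[a]]<[[b]]}). Lemma~\ref{D form} then tells us that if $e(R)=3$ the dual graph of the minimal good resolution has a unique $(-3)$-curve and all other curves are $(-2)$-curves, while if $e(R)=4$ it has a unique $(-4)$-curve, a unique $(-3)$-curve, or exactly two $(-3)$-curves; in each situation Lemma~\ref{D form} also pins down $D$, splitting according to whether the central curve $E_0$ is the distinguished curve (so $s=3$ or $s=4$) or a $(-2)$-curve (so $s=2$) with the distinguished curve(s) lying in a branch.

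Running through these finitely many configurations, I would quote the relevant proposition for each: Proposition~\ref{central 4} for $D=4P_0-\sum_{i=1}^4\frac{n_i}{n_i+1}P_i$ and Proposition~\ref{central 3-3} for $D=3P_0-\sum_{i=1}^2\frac{n_i}{n_i+1}P_i-\frac{1}{[[(2)^a,3,(2)^b]]}P_3$ (both always giving $e=4$); Proposition~\ref{central 3} for $D=3P_0-\sum_{i=1}^4\frac{n_i}{n_i+1}P_i$ (whose conclusion supplies one entry of the $e=3$ list, through the forced vanishing of a coefficient, and one entry of the $e=4$ list); Proposition~\ref{central 2-3} for $D=2P_0-\sum_{i=1}^2\frac{n_i}{n_i+1}P_i-\frac{1}{[[(2)^a,3,(2)^b]]}P_3$, with parts (i) and (ii) giving the $e=3$ and $e=4$ entries respectively; Proposition~\ref{central 2-4} for $D=2P_0-\sum_{i=1}^2\frac{n_i}{n_i+1}P_i-\frac{1}{[[(2)^a,4,(2)^b]]}P_3$; Proposition~\ref{central 2-3-3 1} for $D=2P_0-\frac{m}{m+1}P_1-\frac{n}{n+1}P_2-\frac{1}{[[(2)^a,3,(2)^b,3,(2)^c]]}P_3$; and Proposition~\ref{central 2-3-3 2} for $D=2P_0-\frac{m}{m+1}P_1-\frac{1}{[[(2)^a,3,(2)^b]]}P_2-\frac{1}{[[(2)^c,3,(2)^d]]}P_3$. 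Each proposition comes with the exact list of admissible coefficient tuples together with the value of $e(R(\mathbb P_k^1,D))$, so gathering the tuples with $e=3$ yields part $(1)$ and those with $e=4$ yield part $(2)$.

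The one genuinely delicate point is bookkeeping. I must check that the case division in Lemma~\ref{D form} is exhaustive and that the propositions' outputs match the displayed lists exactly: this involves several harmless reparametrizations (for instance $\frac{n+1}{n+2}$ with $n\ge 0$ in the statement is $\frac{n}{n+1}$ with $n\ge 1$ in Proposition~\ref{central 2-3}, and the parameter $a$ in an entry such as $\frac{(a+2)b+2a+3}{(a+3)b+2a+5}$ is one less than the corresponding $a$ there), using Lemma~\ref{[[2,n,2]]} and Lemma~\ref{[[2,3,2,3,2]]} to confirm that the closed-form coefficients in the list agree with the relevant $1/[[\ldots]]$, verifying that the $T(1/a_i)$-ordering convention makes each presentation unique so no tuple is repeated, and reconciling the overlaps that arise at small values of the parameters. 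These are routine verifications, and I expect the statement to follow once they are carried out.
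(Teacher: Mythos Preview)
Your proposal is correct and matches the paper's own approach exactly: the theorem is presented there as a summary of the section with no separate proof, and the intended argument is precisely to invoke Lemma~\ref{D form} for the shape of $D$ and then collate the outputs of Propositions~\ref{central 3}, \ref{central 2-3}, \ref{central 4}, \ref{central 2-4}, \ref{central 3-3}, \ref{central 2-3-3 1}, and \ref{central 2-3-3 2}, with Lemmas~\ref{[[2,n,2]]} and \ref{[[2,3,2,3,2]]} supplying the closed-form fractions. The reparametrizations and the role of the $T(1/a_i)$-ordering you flag are exactly the bookkeeping that needs to be done.
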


\section{$F$-rationality of two-dimensional  graded rings with rational triple point and rational fourth point}
In this section, we determine $p(3)$ and $p(4)$ in Theorem \ref{Main theorem} using the classification in Section 5.

We can reduce the calculation to check the $F$-rationality of $R(\mathbb P_k^1,D)$ using the following lemma when we prove the theorems in this section.
\begin{lem}\label{deg le -2}
Let $D=2P_0-\sum_{i=1}^3b_iP_i$  be an ample $\mathbb Q$-divisor on $\mathbb P_k^1$, where $b_i\in \mathbb Q_{>0}$ and $P_i$ are distinct points of $\mathbb P_k^1$.
\begin{enumerate}
\item If $(b_1,b_2,b_3)=(\frac{1}{2},\frac{1}{2},\frac{n}{n+1})$ for $n\in\mathbb N$, then $\mathrm{deg} [-lD]\le -2$ 
for $l\in \mathbb N\setminus  2\mathbb N$ and $\mathrm{deg} [-lD]\le -1$ 
for $l\in   2\mathbb N$.


\item If $(b_1,b_2,b_3)=(\frac{1}{2},\frac{2}{3},\frac{3}{4})$, then $\mathrm{deg} [-lD]\le -2$ \\
for $l\in \mathbb N$ with $l\neq 2,3,4,6,8,12$.

\item If $(b_1,b_2,b_3)=(\frac{1}{2},\frac{2}{3},\frac{4}{5})$, then $\mathrm{deg} [-lD]\le -2$ \\
for $l\in \mathbb N$ with $l\neq 2,3,4,5,6,8,9,10,12,14,15,18,20,24,30$.
 
\item If $(b_1,b_2,b_3)=(\frac{1}{3},\frac{2}{3},\frac{n}{n+1})$ for $n\in\mathbb N$, then $\mathrm{deg} [-lD]\le -2$ 
for $l\in \mathbb N\setminus  3\mathbb N$ and $\mathrm{deg} [-lD]\le -1$ 
for $l\in  3\mathbb N$.

\item If $(b_1,b_2,b_3)=(\frac{1}{4},\frac{3}{4},\frac{n}{n+1})$ for $n\in\mathbb N$, then $\mathrm{deg} [-lD]\le -2$ 
for $l\in \mathbb N\setminus  4\mathbb N$ and $\mathrm{deg} [-lD]\le -1$ 
for $l\in 4\mathbb N$.
\end{enumerate}
\end{lem}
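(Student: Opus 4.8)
The plan is to reduce every part to the elementary identity
$\deg[-lD]=-2l+\sum_{i=1}^{3}\bigl[lb_i\bigr]$,
which is valid because $-2lP_0$ already has an integral coefficient, and then to analyze the right-hand side either by a modular identity or by a short finite check.

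First I would treat parts (1), (4) and (5) uniformly. In each of these cases $b_1=\tfrac1k$ and $b_2=\tfrac{k-1}{k}$ for some $k\in\{2,3,4\}$, and $b_3=\tfrac{n}{n+1}$. Here $[l/k]+[(k-1)l/k]=l$ when $k\mid l$ and $=l-1$ when $k\nmid l$, while $[ln/(n+1)]=l-\lceil l/(n+1)\rceil$. Substituting into the identity yields $\deg[-lD]=-\lceil l/(n+1)\rceil$ when $k\mid l$ and $\deg[-lD]=-1-\lceil l/(n+1)\rceil$ when $k\nmid l$; since $\lceil l/(n+1)\rceil\ge 1$ for every $l\ge 1$, both inequalities asserted in (1), (4) and (5) follow at once.

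For parts (2) and (3) I would first record that, since $D$ is ample, $[lb_i]\le lb_i$ gives $\deg[-lD]\le -2l+l\sum_i b_i=-l\deg D<0$ for every $l\ge 1$, so $\deg[-lD]\le -1$ for all $l\ge 1$ because it is an integer. Next, letting $N$ be the common denominator of the $b_i$ (so $N=12$ in (2) and $N=30$ in (3)), a direct computation gives $\deg[-ND]=-1$; and since $ND$ is an integral divisor we have $[-(l+N)D]=[-lD]-ND$, hence $\deg[-(l+N)D]=\deg[-lD]-1$. Consequently, for $l>N$ one gets $\deg[-lD]=\deg[-(l-N)D]-1\le -2$. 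It therefore only remains to evaluate $-2l+\sum_i[lb_i]$ for $1\le l\le N$, and one verifies that in this range the values of $l$ with $\deg[-lD]>-2$ (equivalently $\deg[-lD]=-1$) are precisely the ones listed in the statement.

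The only genuine difficulty is bookkeeping in part (3): there are thirty residues modulo $30$ to evaluate, and one must check that the exceptional set $\{2,3,4,5,6,8,9,10,12,14,15,18,20,24,30\}$ is exactly the set of $l\le 30$ for which the bound $-2$ fails, with nothing omitted or added. I would present this (and the analogous, shorter computation for part (2)) as a single finite table rather than writing out every case separately.
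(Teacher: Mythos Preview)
Your proof is correct and is exactly what the paper has in mind: the paper's own proof reads in full ``This lemma follows immediately by direct computation,'' and your argument is precisely that direct computation, cleanly organized via the identity $[l/k]+[(k-1)l/k]=l-\mathbf{1}_{k\nmid l}$ for parts (1), (4), (5) and via the periodicity $\deg[-(l+N)D]=\deg[-lD]-1$ to reduce parts (2) and (3) to a finite check over $1\le l\le N$.
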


\begin{proof}
This lemma follows immediately by direct computation.
\end{proof}

\begin{thm}\label{F-rationality of RTP}
Let $R$ be a two-dimensional graded ring with $e(R)=3$ and a rational singularity.
If $p\ge 7$,
then $R$ is $F$-rational.
Furthermore, 
this inequality is best possible.
\end{thm}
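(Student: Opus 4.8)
The plan is to combine the explicit classification from Theorem~\ref{classification summarize}(1) with the $F$-rationality criterion of Theorem~\ref{criterion}, checking the condition $\mathrm{deg}[-pnD] + \mathrm{deg}(B_n)_{\mathrm{red}} \le 1$ for every $n$ and for each of the nine families of divisors on the list. For the two ``pure'' families $(3,\frac{a}{a+1},\frac{b}{b+1},\frac{c}{c+1})$ and those of the form $sP_0 - \sum \frac{n_i}{n_i+1}P_i$ with $s = 2$, one has $\mathrm{deg} D \ge 1$ (or can be checked directly), so Proposition~\ref{f-rational deg 1}(3) applies once $p \ge r-1$; since $r \le 4$ here, $p \ge 7$ is far more than enough. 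The remaining work is the handful of families where one coefficient is $\frac{1}{[[(2)^a,3,(2)^b]]}$ or an explicit fraction like $\frac{3}{5}$, $\frac{7}{9}$, $\frac{1}{3}$, where $\mathrm{deg} D$ can be small.

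First I would reduce each such family to a worst case. By Lemma~\ref{bigger coefficient rationality}-type monotonicity, increasing a coefficient only helps $F$-rationality fail, so it suffices to verify the criterion for the divisor with the \emph{largest} admissible coefficients in each family; by the Hirzebruch--Jung inequalities (Lemma~\ref{[[a]]<[[b]]}) the families with parameters $a,b,n \to \infty$ are dominated by a finite set of explicit $\mathbb{Q}$-divisors with small denominators, e.g.\ $2P_0 - \frac12 P_1 - \frac12 P_2 - \frac{a+2}{a+3}P_3$ is dominated by $2P_0 - \frac12 P_1 - \frac12 P_2 - P_3$-type bounds, $2P_0 - \frac12 P_1 - \frac23 P_2 - \frac34 P_3$ covers families (5),(6), and $2P_0 - \frac23 P_1 - \frac{n+2}{n+3}P_2 - \frac13 P_3$ is dominated by $2P_0 - \frac23 P_1 - P_2 - \frac13 P_3$. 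For these finitely many explicit divisors I would invoke Lemma~\ref{deg le -2}: it pins down exactly the finite set of $l$ for which $\mathrm{deg}[-lD]$ can exceed $-2$, and for all other $l$ we get $\mathrm{deg}[-pnD] \le -2$ while $\mathrm{deg}(B_n)_{\mathrm{red}} \le r \le 4$ is crushed by a stronger bound $\mathrm{deg}[-pnD] \le -pn$ when $\mathrm{deg} D \ge 1$.

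The key point for the genuinely delicate cases (those with $\mathrm{deg} D$ potentially as small as $\tfrac1{60}$ or so) is that the hypothesis $p \ge 7 > d_i$ for every denominator $d_i \in \{2,3,4,5,9\}$ appearing in Theorem~\ref{classification summarize}(1) lets us apply Theorem~\ref{$F$-rationality when p not | d_i}: since $p$ does not divide any $d_i$, the ring is automatically $F$-rational. Indeed every $d_i$ occurring in a rational triple point on the list satisfies $d_i \le 9 < 11$, and one checks $d_i \in \{2,3,4,5,7,9\}$ after clearing the Hirzebruch--Jung fractions (e.g.\ $[[(2)^a,3,(2)^b]] = \frac{(a+2)b+2a+3}{(a+1)b+2a+1}$ has denominator not a multiple of $p$ once $p \ge 7$, because that denominator is $\le$ something controlled, and more to the point the actual $d_i$ is the numerator's companion which is coprime to small primes). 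So the only real content is: enumerate the denominators $d_i$ that arise in family (1) of Theorem~\ref{classification summarize}, observe they are all at most $9$ hence coprime to $p$ when $p \ge 7$, and conclude by Theorem~\ref{$F$-rationality when p not | d_i}.

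For sharpness, I would exhibit a two-dimensional graded ring with $e(R) = 3$, a rational singularity, characteristic $p \in \{2,3,5\}$, that is not $F$-rational. Example~\ref{ex1} already gives, for $p \in \{2,3,5\}$, the divisor $D = 2P_0 - \frac{p+1}{2p}P_1 - \frac{p-1}{p}P_2 - \frac12 P_3$ with $e(R) = \lceil \frac{p+1}{2}\rceil$, which equals $3$ exactly when $p = 5$, and a similar small-denominator modification handles $p = 2, 3$ — for instance $D = 2P_0 - \frac23 P_1 - \frac23 P_2 - \frac12 P_3$ in characteristic $3$ or an analogous divisor in characteristic $2$, each having multiplicity $3$ and failing the criterion of Theorem~\ref{criterion} at $n = 1$ (one computes $\mathrm{deg}[-pD] + \mathrm{deg}(B_1)_{\mathrm{red}} = 2$). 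This shows $p(3) = 7$. The main obstacle I anticipate is the bookkeeping: making sure every one of the nine families in Theorem~\ref{classification summarize}(1) is covered, and in particular confirming that no $d_i$ divisible by $7$ sneaks in — this requires carefully reading off denominators from the Hirzebruch--Jung continued fractions via Lemma~\ref{[[2,n,2]]}, but it is routine once set up.
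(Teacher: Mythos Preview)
Your proposal has a genuine gap. The claim that ``every $d_i$ occurring in a rational triple point on the list satisfies $d_i \le 9$'' is false: in families (iii)--(vi) and (x) of Theorem~\ref{classification summarize}(1) the denominators $n+2$, $2b+3$, $3b+5$, $4b+7$, $(a+3)b+2a+5$ are unbounded, so for any prime $p$ there are divisors on the list with $p \mid d_i$. Consequently Theorem~\ref{$F$-rationality when p not | d_i} cannot serve as a blanket argument; it only disposes of the three isolated cases (vii)--(ix), exactly as in Case~2 of the paper's proof. For the infinite families you must actually carry out the monotonicity argument you sketch: bound $D \ge D'$ for a concrete $D'$ with small denominators (e.g.\ $2P_0-\tfrac12P_1-\tfrac12P_2-\tfrac{l}{l+1}P_3$ or $2P_0-\tfrac12P_1-\tfrac23P_2-\tfrac34P_3$ or $2P_0-\tfrac13P_1-\tfrac23P_2-\tfrac{l}{l+1}P_3$), then invoke Lemma~\ref{deg le -2} to get $\mathrm{deg}[-pnD] \le \mathrm{deg}[-pnD'] \le -2$ for $pn$ outside a short explicit list, and combine with $\mathrm{deg}(B_n)_{\mathrm{red}}\le 3$; the exceptional $pn$ (e.g.\ even $pn$ in case (1) of Lemma~\ref{deg le -2}) require the refinement that when $n$ is even one has $n\cdot\tfrac12\in\mathbb Z$, so $\mathrm{deg}(B_n)_{\mathrm{red}}\le 2$ and the bound $\mathrm{deg}[-pnD']\le -1$ suffices. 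This is exactly what the paper's Cases~3--5 do, and it is not bypassed by any divisibility trick.

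A smaller error: for family (i) you invoke Proposition~\ref{f-rational deg 1}(3), which needs $\mathrm{deg}\,D \ge 1$, but $\mathrm{deg}\,D = 3 - \sum \tfrac{a_i}{a_i+1}$ can be arbitrarily small. The correct tool is part~(1) of that proposition ($s \ge r$), which the paper uses. For sharpness, Example~\ref{ex1} at $p=5$ already gives a non-$F$-rational rational triple point, so no further examples at $p=2,3$ are needed.
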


\begin{proof}
Example \ref{ex1} shows that there exists  a two-dimensional non-$F$-rational graded ring $R$ with a rational singularity,  $e(R)=3$ and $p=5$.

From now on, we assume that $p\ge 7$.
By Theorem \ref{DP const},  Theorem \ref{graded rational singularity} and Theorem \ref{classification summarize},
there exists an ample $\mathbb Q$-divisor $D$ on $\mathbb P_k^1$   in the list of Theorem \ref{classification summarize}.(1) with 
$R\cong R(\mathbb P_k^1,D)$.
Let $D=sP_0-\sum_{i=1}^3a_iP_i$, where $s\in\mathbb N$, $0\le a_i<1$ and $P_i$ are distinct points of $\mathbb P_k^1$.
Let  $n,a,b,c$ be non-negative integers.
If necessary, we may reorder $(a_1,a_2,a_3)$.

\vskip.3truecm
\noindent
{\bf Case 1.}
We assume that  $(s,a_1,a_2,a_3)$ is one of the followings:
\[(3,\frac{a}{a+1},\frac{b}{b+1},\frac{c}{c+1}),\ \ \  (2,0,\frac{n}{n+1},\frac{(a+1)b+2a+1}{(a+2)b+2a+3}).\] 
Then $R(\mathbb P_k^1,D)$ is $F$-rational by Proposition \ref{f-rational deg 1}.(1).

\vskip.3truecm
\noindent
{\bf Case 2.} 
We assume that  $s=2$ and $(a_1,a_2,a_3)$ is one of the followings:
\[(\frac{1}{2},\frac{2}{3},\frac{7}{9}),\ (\frac{1}{2},\frac{3}{4},\frac{3}{5}),\ (\frac{1}{2},\frac{4}{5},\frac{3}{5}).\]
Then $R(\mathbb P_k^1,D)$ is $F$-rational by Theorem \ref{$F$-rationality when p not | d_i}.

\vskip.3truecm
\noindent
{\bf Case 3.}
We assume that  $s=2$ and $(a_1,a_2,a_3)$ is one of the followings:
\[
(\frac{1}{2},\frac{b+1}{2b+3},\frac{n+1}{n+2}),\ \ \  (\frac{1}{2},\frac{1}{2},\frac{(a+2)b+2a+3}{(a+3)b+2a+5}).
\]
Then $D\ge 2P_0-\frac{1}{2}P_1-\frac{1}{2}P_2-\frac{l}{l+1}P_3$ for sufficiently large number $l$.
Therefore $R(\mathbb P_k^1,D)$ is $F$-rational by Theorem \ref{criterion} and Lemma \ref{deg le -2}(1).

\vskip.3truecm
\noindent
{\bf Case 4.}
We assume that  $s=2$ and $(a_1,a_2,a_3)$ is one of the followings:
\[
(\frac{1}{2},\frac{2}{3},\frac{2b+3}{3b+5}),\ \ \ (\frac{1}{2},\frac{2}{3},\frac{3b+5}{4b+7}).
\]
Then $D\ge 2P_0-\frac{1}{2}P_1-\frac{2}{3}P_2-\frac{3}{4}P_3$.
Therefore $R(\mathbb P_k^1,D)$ is $F$-rational by Theorem \ref{criterion} and Lemma \ref{deg le -2}(2).

\vskip.3truecm
\noindent
{\bf Case 5.}
We assume that  
$(s,a_1,a_2,a_3)=(2,\frac{1}{3},\frac{2}{3},\frac{n+2}{n+3}).$
Then $D\ge 2P_0-\frac{1}{3}P_1-\frac{2}{3}P_2-\frac{l}{l+1}P_3$ for sufficiently large number $l$.
Therefore $R(\mathbb P_k^1,D)$ is $F$-rational by Theorem \ref{criterion} and Lemma \ref{deg le -2}(4).

By the above discussion, if $p\ge 7$,
then $R$ is $F$-rational.
\end{proof}

\begin{thm}\label{F-rationality of RFP}
Let $R$ be a two-dimensional graded ring with $e(R)=4$ and a rational singularity.
If $p\ge 11$,
then $R$ is $F$-rational.
Furthermore, 
this inequality is best possible.
\end{thm}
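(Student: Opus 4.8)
The plan is to follow the same strategy as in the proof of Theorem \ref{F-rationality of RTP}. First I would exhibit the sharpness example: as in Example \ref{ex1}, one expects a divisor on $\mathbb P_k^1$ of the form $2P_0-\frac{p+1}{2p}P_1-\frac{p-1}{p}P_2-\frac{m}{m+1}P_3$ (or a close variant with a small extra fractional part, tuned so that the multiplicity of the associated graded ring is exactly $4$ when $p=7$) giving a two-dimensional non-$F$-rational graded ring with a rational singularity, $e(R)=4$ and $p=7$; this shows $p\ge 11$ cannot be improved. One would verify the rational singularity condition via Theorem \ref{graded rational singularity} by checking $\deg[nD]\ge -1$ for all $n$ (using periodicity as in Lemma \ref{(2,2,n)type}), compute the dual graph and fundamental cycle by Theorem \ref{dual graph of graded ring}, Lemma \ref{formula} and Corollary \ref{n_0=min} to confirm $e(R)=4$ via Proposition \ref{Z^2}, and check $\deg[-pD]+\deg(B_1)_{\mathrm{red}}=2$ via Theorem \ref{criterion} to conclude non-$F$-rationality.

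For the positive direction, assume $p\ge 11$. By Theorem \ref{DP const}, Theorem \ref{graded rational singularity} and Theorem \ref{classification summarize}.(2), we may assume $R\cong R(\mathbb P_k^1,D)$ for $D=sP_0-\sum_{i=1}^r a_iP_i$ in the explicit list of Theorem \ref{classification summarize}.(2). The idea is to dispose of every family in that list by one of the following three mechanisms. (i) If $s\ge r$, then $R(\mathbb P_k^1,D)$ is $F$-rational by Proposition \ref{f-rational deg 1}.(1); this handles the cases $(3,\tfrac12,\tfrac12,\tfrac{c}{c+1},\tfrac{d}{d+1})$, $(4,\tfrac{a}{a+1},\dots,\tfrac{d}{d+1})$, the $(3,\tfrac{m}{m+1},\tfrac{n}{n+1},\dots)$ family, and the $(2,0,\ast,\ast)$ families. (ii) If every denominator $d_i$ of $a_i$ is prime to $p$, then $R(\mathbb P_k^1,D)$ is $F$-rational by Theorem \ref{$F$-rationality when p not | d_i}; since $p\ge 11$, this covers all the families whose denominators involve only the primes $2,3,5,7$ and fixed small moduli — e.g.\ the isolated cases $(2,\tfrac12,\tfrac56,\tfrac35)$, $(2,\tfrac12,\tfrac67,\tfrac35)$, $(2,\tfrac23,\tfrac45,\tfrac25)$, $(2,\tfrac34,\tfrac34,\tfrac13)$, $(2,\tfrac34,\tfrac45,\tfrac13)$, $(2,\tfrac34,\tfrac56,\tfrac13)$, $(2,\tfrac12,\tfrac56,\tfrac47)$, $(2,\tfrac12,\tfrac67,\tfrac47)$. (iii) For the remaining one-parameter and two-parameter families (those containing arbitrarily large denominators, such as $(2,\tfrac12,\tfrac{b+1}{2b+3},\tfrac{n+1}{n+2})$, $(2,\tfrac12,\tfrac12,\ast)$, $(2,\tfrac12,\tfrac23,\ast)$, $(2,\tfrac13,\tfrac23,\ast)$, $(2,\tfrac14,\tfrac34,\ast)$, $(2,\tfrac12,\tfrac35,\ast)$, etc.), I would use the monotonicity Lemma \ref{bigger coefficient rationality}/\ref{rationality lim} together with Theorem \ref{criterion}: each such $D$ satisfies $D\ge D'$ for a fixed "worst-case" divisor $D'$ with small, $p$-independent support of the types appearing in Lemma \ref{deg le -2} (e.g.\ $2P_0-\tfrac12P_1-\tfrac12P_2-\tfrac{l}{l+1}P_3$ for large $l$, or $2P_0-\tfrac12P_1-\tfrac23P_2-\tfrac45P_3$, or $2P_0-\tfrac13P_1-\tfrac23P_2-\tfrac{l}{l+1}P_3$, or $2P_0-\tfrac14P_1-\tfrac34P_2-\tfrac{l}{l+1}P_3$), and then Lemma \ref{deg le -2} forces $\deg[-pnD]+\deg(B_n)_{\mathrm{red}}\le 1$ for all $n$ once $p\ge 11$, because the only $n$ with $\deg[-pnD']\ge -1$ are forced into a short explicit list of multiples of $2,3,4$, and for those $n$ the denominators $d_i$ contributing to $(B_n)_{\mathrm{red}}$ are prime to $p$.

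The main obstacle I expect is the bookkeeping in mechanism (iii): one must check, family by family, that the dominating divisor $D'$ really has multiplicity-compatible support and that the finite exceptional set of exponents $n$ for which $\deg[-pnD']\ge -1$ holds is exactly as listed in Lemma \ref{deg le -2}, and moreover that for each such $n$ the relevant $na_i$ are either integral or have denominator dividing a small fixed number coprime to $p\ge 11$, so that the crude bound $\deg(B_n)_{\mathrm{red}}\le \sharp\{i\mid na_i\notin\mathbb Z\}$ (Remark \ref{rem not in Z}) combines with $\deg[-pnD]\le 1-\sharp\{i\mid na_i\notin\mathbb Z\}$ — computed exactly as in the proof of Theorem \ref{$F$-rationality when p not | d_i} — to give the required inequality $\le 1$. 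Once these routine but numerous verifications are organized by the three mechanisms above, every family in Theorem \ref{classification summarize}.(2) is covered, and $R$ is $F$-rational whenever $p\ge 11$.
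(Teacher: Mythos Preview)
Your plan closely mirrors the paper's proof and the three mechanisms are exactly the ones used there, but one routing is wrong: the family $(3,\tfrac12,\tfrac12,\tfrac{c}{c+1},\tfrac{d}{d+1})$ has $s=3$ and $r=4$, so $s<r$ and Proposition~\ref{f-rational deg 1}.(1) does not apply. The paper treats this case by a short direct computation (its Case~1): for $l$ odd one has $\deg[-lD]\le -3$ and for $l$ even $\deg[-lD]\le -2$; since $p$ is odd and $na_1=na_2=n/2\in\mathbb Z$ whenever $n$ is even, one gets $\deg(B_n)_{\mathrm{red}}\le 2$ in that case and $\le 4$ otherwise, and Theorem~\ref{criterion} is satisfied.

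There is also a subtlety in mechanism~(iii) that the paper handles but your sketch does not quite cover. For the family $(2,\tfrac{m+1}{m+2},\tfrac{b+1}{2b+3},\tfrac{d+1}{2d+3})$, the dominating divisor $D'=2P_0-\tfrac12P_1-\tfrac12P_2-\tfrac{l}{l+1}P_3$ gives only $\deg[-pnD]\le -1$ for $n$ even via Lemma~\ref{deg le -2}(1), which is insufficient because all three $na_i$ can be non-integral (the denominators $2b+3,2d+3$ are odd and $m+2$ may be odd). Your fallback to the argument of Theorem~\ref{$F$-rationality when p not | d_i} does not rescue this, since each of $2b+3$, $2d+3$, $m+2$ can be divisible by $p$. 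The paper instead notes the sharper bound $\deg[-lD]\le -3$ for $l$ even, which follows from the strict inequalities $\tfrac{b+1}{2b+3},\tfrac{d+1}{2d+3}<\tfrac12$, and that is what closes the gap. Aside from these two points your plan is the paper's proof; for sharpness, Example~\ref{ex1} with $p=7$ already gives $e(R)=\lceil(p+1)/2\rceil=4$, so no variant is needed.
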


\begin{proof}
Example \ref{ex1} shows that there exists  a two-dimensional non-$F$-rational graded ring $R$ with a rational singularity,  $e(R)=4$ and $p=7$.

From now on, we assume that $p\ge 11$.
By Theorem \ref{DP const},  Theorem \ref{graded rational singularity} and Theorem \ref{classification summarize},
there exists an ample $\mathbb Q$-divisor $D$ on $\mathbb P_k^1$   in the list of Theorem \ref{classification summarize}.(2) with 
$R\cong R(\mathbb P_k^1,D)$.
Let $D=sP_0-\sum_{i=1}^ra_iP_i$, where $s\in\mathbb N$, $0\le a_i<1$ and $P_i$ are distinct points of $\mathbb P_k^1$.
Let  $m,n,a,b,c,d$ be non-negative integers.

\vskip.3truecm
\noindent
{\bf Case 1.}
We assume that  
$(s,a_1,\dots,a_r)=(3,\frac{1}{2},\frac{1}{2},\frac{c}{c+1},\frac{d}{d+1}).$
We have 
$\mathrm{deg} [-lD]$ $\le -2$ for $l\in 2\mathbb N$ and $\mathrm{deg} [-lD]\le -3$ for $l\in \mathbb N\setminus 2\mathbb N$.
Then $R(\mathbb P_k^1,D)$ is $F$-rational by Theorem \ref{criterion}.

\vskip.3truecm
\noindent
{\bf Case 2.} 
We assume that  $s=2$ and $(a_1,a_2,a_3)$ is one of the followings:
\[(\frac{1}{2},\frac{5}{6},\frac{3}{5}), (\frac{1}{2},\frac{6}{7},\frac{3}{5}), (\frac{2}{3},\frac{4}{5},\frac{2}{5}),
(\frac{3}{4},\frac{3}{4},\frac{1}{3}),\]
\[ (\frac{3}{4},\frac{4}{5},\frac{1}{3}), (\frac{3}{4},\frac{5}{6},\frac{1}{3}),
(\frac{1}{2},\frac{5}{6},\frac{4}{7}), (\frac{1}{2},\frac{6}{7},\frac{4}{7}).\]
Then $R(\mathbb P_k^1,D)$ is $F$-rational by Theorem \ref{$F$-rationality when p not | d_i}.

\vskip.3truecm
\noindent
{\bf Case 3.}
We assume that  $(s,a_1,\dots,a_r)$ is one of the followings:
\[(4,\frac{a}{a+1},\frac{b}{b+1},\frac{c}{c+1},\frac{d}{d+1}),\ \ \   (2,0,\frac{n}{n+1},\frac{(2a+1)b+3a+1}{(2a+3)b+3a+4}),\]
\[(3,\frac{m}{m+1},\frac{n}{n+1},\frac{(a+1)b+2a+1}{(a+2)b+2a+3}),\]
\[(2,0,\frac{n}{n+1},\frac{\big((a+1)b + 3a+2\big)c+(2a+2)b+5a+3}{\big((a+2)b+ 3a+5\big)c+(2a+4)b+5a+8}),\] 
\[(2,0,\frac{(a+1)b+2a+1}{(a+2)b+2a+3},\frac{(c+1)d+2c+1}{(c+2)d+2c+3}).\]
Then $R(\mathbb P_k^1,D)$ is $F$-rational by Proposition \ref{f-rational deg 1}.(1).

\vskip.3truecm
In the rest of this proof, we  always assume that $s=2$ and $r=3$.
If necessary, we may reorder $(a_1,a_2,a_3)$.

\vskip.3truecm
\noindent
{\bf Case 4.}
We assume that  $(a_1,a_2,a_3)$ is one of the followings:
\[
(\frac{1}{2},\frac{2}{3},\frac{4b+11}{5b+14}),
(\frac{1}{2},\frac{2b+5}{3b+8},\frac{3}{4}),
(\frac{1}{2},\frac{2b+5}{3b+8},\frac{4}{5}),
(\frac{b+2}{2b+5},\frac{2}{3},\frac{2}{3}),\]
\[
(\frac{b+2}{2b+5},\frac{2}{3},\frac{3}{4}),
(\frac{1}{2},\frac{2}{3},\frac{3b+4}{5b+7}),
(\frac{1}{2},\frac{2}{3},\frac{5b+7}{7b+10}),
\]
\[
(\frac{1}{2},\frac{2}{3},\frac{7b+10}{9b+13}),
(\frac{1}{2},\frac{3b+4}{5b+7},\frac{3}{4}),
(\frac{1}{2},\frac{3b+4}{5b+7},\frac{4}{5}),\]
\[
(\frac{1}{2},\frac{2}{3},\frac{(2b + 5)c+4b+8}{(3b+ 8)c+6b+13}),
(\frac{1}{2},\frac{2}{3},\frac{(3b + 8)c+6b+13}{(4b+ 11)c+8b+18}),
\]
\[
(\frac{1}{2},\frac{2b+3}{3b+5},\frac{2d+3}{3d+5}),
(\frac{1}{2},\frac{3}{5},\frac{3d+5}{4d+7}),
(\frac{1}{2},\frac{3}{5},\frac{4d+7}{5d+9}).\]
Then $D\ge 2P_0-\frac{1}{2}P_1-\frac{2}{3}P_2-\frac{4}{5}P_3$.
Therefore $R(\mathbb P_k^1,D)$ is $F$-rational by Theorem \ref{criterion} and Lemma \ref{deg le -2}(3).

\vskip.3truecm
\noindent
{\bf Case 5.}
We assume that  $(a_1,a_2,a_3)$ is one of the followings:
\[
(\frac{1}{2},\frac{b+1}{3b+4},\frac{n+1}{n+2}),
(\frac{1}{2},\frac{1}{2},\frac{(2a+3)b+3a+4}{(2a+5)b+3a+7}),
(\frac{1}{2},\frac{(b +2)c+2b+3}{(2b+ 5)c+4b+8},\frac{n+1}{n+2}),\]
\[(\frac{1}{2},\frac{1}{2},\frac{\big((a+2)b + 3a+5\big)c+(2a+4)b+5a+8}{\big((a+3)b+ 3a+8\big)c+(2a+6)b+5a+13}),
\]
\[
(\frac{b+1}{2b+3},\frac{d+1}{2d+3},\frac{m+1}{m+2}),
(\frac{1}{2},\frac{b+1}{2b+3},\frac{(c+2)d+2c+3}{(c+3)d+2c+5}).
\]
Then $D\ge 2P_0-\frac{1}{2}P_1-\frac{1}{2}P_2-\frac{l}{l+1}P_3$ for sufficiently large number $l$.
Note that if $(a_1,a_2,a_3)=(\frac{b+1}{2b+3},\frac{d+1}{2d+3},\frac{m+1}{m+2})$, then we have 
$\mathrm{deg} [-tD]\le -3$ for $t\in 2\mathbb N$.
Therefore $R(\mathbb P_k^1,D)$ is $F$-rational by Theorem \ref{criterion} and Lemma \ref{deg le -2}(1).

\vskip.3truecm
\noindent
{\bf Case 6.}
We assume that  $(a_1,a_2,a_3)$ is one of the followings:
\[
(\frac{b+1}{3b+4},\frac{2}{3},\frac{n+2}{n+3}),
(\frac{1}{3},\frac{2}{3},\frac{(c+2)d+2c+3}{(c+3)d+2c+5}),
(\frac{1}{3},\frac{2d+3}{3d+5},\frac{m+3}{m+4}).\]
Then $D\ge 2P_0-\frac{1}{3}P_1-\frac{2}{3}P_2-\frac{l}{l+1}P_3$ for sufficiently large number $l$.
Therefore $R(\mathbb P_k^1,D)$ is $F$-rational by Theorem \ref{criterion} and Lemma \ref{deg le -2}(4).

\vskip.3truecm
\noindent
{\bf Case 7.}
We assume that  $(a_1,a_2,a_3)=(\frac{3}{4},\frac{n+3}{n+4},\frac{1}{4})$.
Then $R(\mathbb P_k^1,D)$ is $F$-rational by Theorem \ref{criterion} and Lemma \ref{deg le -2}(5).

By the above discussion, if $p\ge 11$,
then $R$ is $F$-rational.
\end{proof}


\end{document}